\documentclass[oneside,american,english]{amsart}
\usepackage[T1]{fontenc}
\usepackage[latin9]{inputenc}
\usepackage{amsthm}
\usepackage{amstext}
\usepackage{amssymb}
\usepackage{esint}

\makeatletter
\numberwithin{equation}{section}
\numberwithin{figure}{section}
  \theoremstyle{plain}
  \newtheorem*{thm*}{\protect\theoremname}
\theoremstyle{plain}
\newtheorem{thm}{\protect\theoremname}
  \theoremstyle{definition}
  \newtheorem{defn}[thm]{\protect\definitionname}
  \theoremstyle{plain}
  \newtheorem{prop}[thm]{\protect\propositionname}
  \theoremstyle{definition}
  \newtheorem{example}[thm]{\protect\examplename}
  \theoremstyle{plain}
  \newtheorem{cor}[thm]{\protect\corollaryname}
  \theoremstyle{remark}
  \newtheorem{rem}[thm]{\protect\remarkname}
  \theoremstyle{plain}
  \newtheorem{lem}[thm]{\protect\lemmaname}

\usepackage{ae,aecompl} 

\usepackage{enumitem}

\renewenvironment{enumerate}{\begin{oldenumerate}[topsep=0pt]}{\end{oldenumerate}}


\setlength{\parskip}{\medskipamount}

\makeatletter
\newtheorem*{rep@theorem}{\rep@title}
\newcommand{\newreptheorem}[2]{%
\newenvironment{rep#1}[1]{%
 \def\rep@title{#2 \ref{##1}}%
 \begin{rep@theorem}}%
 {\end{rep@theorem}}}
\makeatother

\newreptheorem{theorem}{Theorem}

\makeatother

\usepackage{babel}
  \addto\captionsamerican{\renewcommand{\corollaryname}{Corollary}}
  \addto\captionsamerican{\renewcommand{\definitionname}{Definition}}
  \addto\captionsamerican{\renewcommand{\examplename}{Example}}
  \addto\captionsamerican{\renewcommand{\lemmaname}{Lemma}}
  \addto\captionsamerican{\renewcommand{\propositionname}{Proposition}}
  \addto\captionsamerican{\renewcommand{\remarkname}{Remark}}
  \addto\captionsamerican{\renewcommand{\theoremname}{Theorem}}
  \addto\captionsenglish{\renewcommand{\corollaryname}{Corollary}}
  \addto\captionsenglish{\renewcommand{\definitionname}{Definition}}
  \addto\captionsenglish{\renewcommand{\examplename}{Example}}
  \addto\captionsenglish{\renewcommand{\lemmaname}{Lemma}}
  \addto\captionsenglish{\renewcommand{\propositionname}{Proposition}}
  \addto\captionsenglish{\renewcommand{\remarkname}{Remark}}
  \addto\captionsenglish{\renewcommand{\theoremname}{Theorem}}
  \providecommand{\corollaryname}{Corollary}
  \providecommand{\definitionname}{Definition}
  \providecommand{\examplename}{Example}
  \providecommand{\lemmaname}{Lemma}
  \providecommand{\propositionname}{Proposition}
  \providecommand{\remarkname}{Remark}
  \providecommand{\theoremname}{Theorem}
\providecommand{\theoremname}{Theorem}

\begin{document}
\selectlanguage{american}%
\global\long\def\epsilon{\varepsilon}

\global\long\def\phi{\varphi}

\global\long\def\R{\mathbb{R}}

\global\long\def\N{\mathbb{N}}

\global\long\def\o{\mathbf{1}}

\global\long\def\T{\mathcal{T}}

\global\long\def\S{\mathcal{S}}

\global\long\def\L{\mathcal{L}}

\global\long\def\kn{\mathcal{K}^{n}}

\global\long\def\vol{\text{Vol}}

\global\long\def\lc{\operatorname{LC}\left(\R^{n}\right)}

\global\long\def\lco{\operatorname{LC}_{0}\left(\R^{n}\right)}

\global\long\def\qc{\operatorname{QC}\left(\R^{n}\right)}

\global\long\def\qco{\operatorname{QC}_{0}\left(\R^{n}\right)}

\global\long\def\cvx{\operatorname{Cvx}\left(\R^{n}\right)}

\global\long\def\KL{\underline{K}}

\global\long\def\KU{\overline{K}}

\selectlanguage{english}%

\title{Mixed integrals and related inequalities}

\author{Vitali Milman}

\address{School of Mathematical Sciences \\
Tel Aviv University \\
Tel Aviv 69978, Israel}

\email{milman@post.tau.ac.il}

\thanks{Partially supported by the Minkowski Center at the University of
Tel Aviv, by ISF grant 387/09 and by BSF grant 2006079.}

\author{Liran Rotem}

\address{School of Mathematical Sciences \\
Tel Aviv University \\
Tel Aviv 69978, Israel}

\email{liranro1@post.tau.ac.il}
\begin{abstract}
In this paper we define an addition operation on the class of quasi-concave
functions. While the new operation is similar to the well-known sup-convolution,
it has the property that it polarizes the Lebesgue integral. This
allows us to define mixed integrals, which are the functional analogs
of the classic mixed volumes.

We extend various classic inequalities, such as the Brunn-Minkowski
and the Alexandrov-Fenchel inequality, to the functional setting.
For general quasi-concave functions, this is done by restating those
results in the language of rearrangement inequalities. Restricting
ourselves to log-concave functions, we prove generalizations of the
Alexandrov inequalities in a more familiar form. 
\end{abstract}

\keywords{mixed integrals, quasi-concavity, rescaling, mixed volumes, log-concavity,
Brunn-Minkowski, Alexandrov-Fenchel.}

\subjclass[2010]{52A39, 26B25}

\maketitle

\section{Introduction}

One of the fundamental theorems in classic convexity is Minkowski's
theorem on mixed volumes. In order to state the theorem, we will need
some basic definitions. Denote by $\kn$ the class of all closed,
convex sets in $\R^{n}$. On $\kn$ we have the operation of Minkowski
addition, defined by 
\[
K_{1}+K_{2}=\left\{ x_{1}+x_{2}:\ x_{1}\in K_{1},\ x_{2}\in K_{2}\right\} .
\]
 Similarly, if $K\in\kn$ and $\lambda\ge0$, we can define the homothet
$\lambda\cdot K$ as 
\[
\lambda\cdot K=\left\{ \lambda x:\ x\in K\right\} .
\]
 Finally, for $K\in\kn$ define $\vol(K)\in[0,\infty]$ to be the
standard Lebesgue volume of $K$. Now we can state Minkowski's theorem
(see, e.g. \cite{schneider_convex_1993} for a proof):
\begin{thm*}
[Minkowski]Fix $K_{1},K_{2},\ldots,K_{m}\in\kn$. Then the function
$F:\left(\R^{+}\right)^{m}\to[0,\infty]$, defined by 
\[
F(\epsilon_{1},\epsilon_{2},\ldots,\epsilon_{m})=\vol\left(\epsilon_{1}K_{1}+\epsilon_{2}K_{2}+\cdots+\epsilon_{m}K_{m}\right),
\]
 is a homogenous polynomial of degree $n$, with positive coefficients.
\end{thm*}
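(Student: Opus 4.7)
The plan is to prove the theorem in three stages: first establish the homogeneity of $F$ directly, then reduce to the case when each $K_{i}$ is a convex polytope using continuity, and finally prove the polytope case by induction on the dimension $n$. Homogeneity is immediate: since $\vol(\lambda K) = \lambda^{n}\vol(K)$ and $\lambda\sum_{i}\epsilon_{i}K_{i} = \sum_{i}(\lambda\epsilon_{i})K_{i}$, we have $F(\lambda\epsilon_{1},\ldots,\lambda\epsilon_{m}) = \lambda^{n} F(\epsilon_{1},\ldots,\epsilon_{m})$. So once $F$ is shown to be polynomial, it is automatically homogeneous of degree $n$.

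To reduce to the polytope case, I would approximate each $K_{i}$ by convex polytopes in the Hausdorff metric. Both Minkowski addition and Lebesgue volume (restricted to subsets of a common compact set) are continuous with respect to the Hausdorff distance, so the corresponding volume functions $F^{(k)}$ for the approximating polytopes converge pointwise to $F$ on $(\R^{+})^{m}$. A pointwise limit of a uniformly-bounded-degree family of polynomials is again a polynomial of the same degree, whose coefficients are the limits of the coefficients of the approximants; thus it suffices to handle the polytope case, and non-negativity of the coefficients passes to the limit.

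For polytopes $P_{1},\ldots,P_{m}$ I would induct on $n$. The key geometric input is that for all $\epsilon_{i}>0$, the normal fan of $P = \sum_{i}\epsilon_{i}P_{i}$ is the common refinement of the normal fans of the $P_{i}$; in particular the set $\{u_{1},\ldots,u_{N}\}$ of outer facet normals of $P$ is independent of the specific values $\epsilon_{i}>0$. Decomposing $P$ into cones over its facets with apex at the origin gives
\[
\vol(P) \;=\; \frac{1}{n}\sum_{j=1}^{N} h_{P}(u_{j})\,\vol_{n-1}(F_{j}(P)),
\]
where $F_{j}(P) = \{x\in P:\langle x,u_{j}\rangle = h_{P}(u_{j})\}$ is the facet with normal $u_{j}$. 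Minkowski-linearity of the support function yields $h_{P}(u_{j}) = \sum_{i}\epsilon_{i} h_{P_{i}}(u_{j})$, a linear function of the $\epsilon_{i}$. Moreover $F_{j}(P) = \sum_{i}\epsilon_{i} F_{j}(P_{i})$ as a Minkowski sum inside the hyperplane $\{\langle x,u_{j}\rangle = h_{P}(u_{j})\}$, so by the inductive hypothesis $\vol_{n-1}(F_{j}(P))$ is a homogeneous polynomial of degree $n-1$ in $(\epsilon_{i})$ with non-negative coefficients. Multiplying and summing produces a homogeneous polynomial of degree $n$ in the $\epsilon_{i}$ with non-negative coefficients, which, combined with the reduction above, completes the proof.

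The main obstacle is the geometric lemma that the combinatorial face structure of $P = \sum_{i}\epsilon_{i}P_{i}$ is determined by the common refinement of the normal fans of the summands, together with the identity $F_{j}\!\left(\sum_{i}\epsilon_{i}P_{i}\right) = \sum_{i}\epsilon_{i}F_{j}(P_{i})$ for each shared facet normal $u_{j}$. Once this combinatorial/geometric statement is in place, the inductive step, the Hausdorff approximation, the homogeneity and the non-negativity of the coefficients are all routine.
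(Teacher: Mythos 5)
The paper does not prove this classical theorem itself; it refers the reader to Schneider's book. Your argument is the standard textbook proof (polytope approximation, the cone decomposition $\vol(P)=\frac{1}{n}\sum_{j}h_{P}(u_{j})\vol_{n-1}(F_{j}(P))$, and induction on dimension via $F_{j}(K+L)=F_{j}(K)+F_{j}(L)$), and for compact bodies it is essentially sound. However, there is a genuine gap with respect to the statement as the paper phrases it: $\kn$ here is the class of all \emph{closed} convex sets, and the paper explicitly points out that compactness is not assumed and that the value $+\infty$ is allowed. Your Hausdorff-approximation reduction collapses for unbounded $K_{i}$: the Hausdorff distance to polytopes is not useful, the volumes are not finite, and ``pointwise limit of polynomials of bounded degree'' is meaningless once the values are infinite. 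The fix is a preliminary case split: if some $K_{i}=\emptyset$ then $F\equiv 0$; if all $K_{i}$ are nonempty but the affine hull of $\sum_{i}K_{i}$ is proper then $F\equiv 0$; if all $K_{i}$ are nonempty, the affine hull is all of $\R^{n}$, and some $K_{i}$ is unbounded, then the recession cone of $\sum_{i}\epsilon_{i}K_{i}$ is nontrivial and independent of $\epsilon>0$, so $F\equiv+\infty$; only in the remaining case are all the $K_{i}$ compact, and then your argument applies.

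Two smaller points inside the compact case. To get non-negativity of the terms in the cone decomposition you need $h_{P}(u_{j})\geq 0$, i.e.\ $0\in P$; this requires first translating each $P_{i}$ by an interior point, which you do not mention (and incidentally ``positive'' in the paper's statement is a slight overstatement, since mixed volumes can be zero; what your argument gives is non-negativity, which is the correct claim). Also, $F_{j}(P_{i})$ is in general a lower-dimensional face rather than a facet of $P_{i}$, so the induction hypothesis should be formulated for arbitrary, possibly degenerate, polytopes; this is routine but worth saying.
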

The main goal of this paper is to extend Minkowski's theorem and related
inequalities, from the class of convex bodies to the larger classes
of log-concave and quasi-concave functions. We will soon give the
relevant definitions and the exact statements, but first let us make
a few comments about Minkowski's theorem. 

First, notice that we did not make the usual assumption that the sets
$K_{i}$ are compact. This is not a problem, as long as we allow our
polynomial to attain the value $+\infty$ and adopt the convention
that $0\cdot\infty=0$. Second, by standard linear algebra, Minkowski's
theorem is equivalent to the existence of a polarization for the volume
form. More explicitly, there exists a function 
\[
V:\left(\kn\right)^{n}\to[0,\infty]
\]
 which is multilinear, symmetric, and satisfies $V(K,K,\ldots,K)=\vol(K)$.
The number $V(K_{1},K_{2},\ldots,K_{n})$ is called the mixed volume
of the $K_{1},K_{2},\ldots,K_{n}$, and is nothing more than the relevant
coefficient of the Minkowski polynomial:
\[
\vol\left(\epsilon_{1}K_{1}+\epsilon_{2}K_{2}+\cdots+\epsilon_{m}K_{m}\right)=\sum_{i_{1},i_{2},\ldots,i_{n}=1}^{m}\epsilon_{i_{1}}\epsilon_{i_{2}}\cdots\epsilon_{i_{n}}\cdot V(K_{i_{1}},K_{i_{2}},\ldots,K_{i_{n}}).
\]

We would also like to note that more than anything, Minkowski's theorem
is a property of the Minkowski addition. To put this comment in perspective,
notice that there are several interesting ways to define the sum of
convex bodies, and the Minkowski addition is just one of the possibilities.
For example, remember that the support function of a convex body $K\in\kn$
is a 1-homogenous convex function $h_{K}:\R^{n}\to\R\cup\left\{ +\infty\right\} $
defined by 
\[
h_{K}(y)=\sup_{x\in K}\left\langle x,y\right\rangle .
\]
 Support functions are connected to the Minkowski sum via the relation
\[
h_{K+T}(x)=h_{K}(x)+h_{T}(x).
\]
 Similarly, if $K$ and $T$ are convex sets containing the origin,
then for every $1\le p\le\infty$ we can define their $L_{p}$-sum
$K+_{p}T$ using the relation 
\[
h_{K+_{p}T}(x)=\left(h_{K}(x)^{p}+h_{T}(x)^{p}\right)^{\frac{1}{p}}.
\]

As a second example, remember that if $K$ is a convex body containing
the origin, then its polar body is another convex body defined by
\[
K^{\circ}=\left\{ y\in\R^{n}:\ h_{K}(y)\le1\right\} .
\]
 For two such bodies $K$ and $T$, we can define a summation operation
by $K\oplus T=\left(K^{\circ}+T^{\circ}\right)^{\circ}$.

In both of the above examples we only defined the addition, and not
the homothety operation. However, it is a general fact that sufficiently
``nice'' addition operations on $\kn$ induce a natural homothety
operation. Specifically, for $m\in\N$ one can always define 
\[
m\cdot K=\underbrace{K+K+\cdots+K}_{m\text{ times}}.
\]
It is often the case that for every $K\in\kn$ and $m\in\N$ there
exists a unique body $T\in\kn$ such that $m\cdot T=K$, and then
it is natural to define $\frac{\ell}{m}\cdot K=\ell\cdot T$. Finally,
one extends the definition to a general $\lambda>0$ using some sort
of continuity. Because of this construction we will suppress the role
of the homotheties in informal discussions, and will talk only about
the addition operation. In other words, we adopt the convention that
homotheties are always the induced from, and compatible with, the
addition operation. It is easy to see that for $L_{p}$-sums the induced
homothety operation is $\lambda\cdot_{p}K=\lambda^{\frac{1}{p}}\cdot K$,
and for the polar sum the induced homothety is $\lambda\odot K=\lambda^{-1}\cdot K$.

Our examples of addition share some appealing properties of the Minkowski
addition. For example, they are all commutative, associative, and
with $\left\{ 0\right\} $ serving as an identity element. However,
using some simple examples, one may check that volume is no longer
a polynomial, if one replaces Minkowski addition by $L_{p}$-addition
(for $p>1$) or polar addition, and the same would be true for ``most''
possible definitions of addition. In fact, in \cite{gardner_operations_2012}
the authors consider the problem of characterizing the Minkowski addition.
Roughly speaking, they show that the Minkowski addition is the only
operation on convex bodies satisfying a short list of properties,
one of which is polynomiality of volume. However, the convention in
\cite{gardner_operations_2012} is that homotheties are always the
classic Minkowski homotheties, and not the ones which are induced
from the addition operation (an interesting related question is whether
there are any addition operations on convex bodies, other than Minkowski
addition, such that the induced homothety operation is the classic
one).

In recent years, it became apparent that embedding the class $\kn$
of convex sets into some class of functions $f:\R^{n}\to[-\infty,\infty]$
can lead to important implications (see the survey \cite{milman_geometrization_2008}).
One natural choice is to embed $\kn$ into the class of convex functions,
by mapping each $K\in\kn$ to its convex indicator function, defined
as 
\[
\o_{K}^{\infty}(x)=\begin{cases}
0 & x\in K\\
\infty & \text{otherwise. }
\end{cases}
\]
 This embedding, however, has the technical disadvantage that convex
functions are almost never integrable. To remedy the situation, we
usually deal with log-concave functions, which are functions $f:\R^{n}\to[0,\infty)$
of the form $f=e^{-\phi}$, where $\phi$ is a convex function. In
particular, every convex set $K$ is mapped to its standard indicator
function, 
\[
\o_{K}(x)=\begin{cases}
1 & x\in K\\
0 & \text{otherwise}.
\end{cases}
\]

To be a bit more formal, we define 
\[
\cvx=\left\{ \phi:\R^{n}\to(-\infty,\infty]:\ \phi\text{ is convex and lower semicontinuous }\right\} ,
\]
 and then 
\[
\lc=\left\{ e^{-\phi}:\ \phi\in\cvx\right\} 
\]
 is the class of log-concave functions. The semi-continuity assumption
is just the analog of the assumption that our convex sets are closed. 

We would like to find a Minkowski-type theorem for log-concave functions.
In order to achieve this goal, we first need to give meaning to the
concept of ``volume'', and the concept of ``addition''. For volume,
we need some functional $I:\lc\to[0,\infty)$ such that $I(\o_{K})=\vol(K)$
for all convex bodies $K$. The obvious candidate is the Lebesgue
integral, 
\[
I(f)=\int_{\R^{n}}f(x)dx.
\]

For addition, matters are more complicated. Defining addition on $\lc$
is, of course, equivalent to defining addition on $\cvx$ -- for any
operation $\oplus$ on $\cvx$ we can define an operation on $\lc$
by $e^{-\phi}\oplus e^{-\psi}=e^{-\left(\phi\oplus\psi\right)}$.
The first attempt at a definition is probably the pointwise addition,
\[
\left(\phi+\psi\right)(x)=\phi(x)+\psi(x),
\]
 which transforms to pointwise multiplication for log-concave functions.
This definition, however, has many problems, not the least of which
is that it does not extend Minkowski addition:
\[
\o_{K}(x)\cdot\o_{T}(x)=\o_{K\cap T}(x)\ne\o_{K+T}(x).
\]

A better definition, and the one that is usually used in applications,
is that of inf-convolution: 
\[
\left(\phi\square\psi\right)(x)=\inf_{y\in\R^{n}}\left[\phi(y)+\psi(x-y)\right].
\]
 The corresponding operation for log-concave functions is the so called
sup-convolution or Asplund sum, defined by 
\[
\left(f\star g\right)(x)=\sup_{y\in\R^{n}}f(y)g(x-y).
\]
 The sup-convolution generalized the Minkowski addition, in the sense
that $\o_{K}\star\o_{T}=\o_{K+T}$. However, there is no Minkowski
type theorem for this operation. This is easy to see, as the Lebesgue
integral is not even homogenous with respect to the sup-convolution:
For a general $f\in\lc$ we do not have 
\[
\int\left(f\star f\right)=2^{n}\int f,
\]
 as one verifies with simple examples.

We will now define another operation on convex functions (and, by
extension, on log-concave functions as well):
\begin{defn}
\label{def:quasi-sum}The sum of convex functions $\phi,\psi\in\cvx$
is 
\[
\left(\phi\oplus\psi\right)(x)=\inf_{y\in\R^{n}}\max\left\{ \phi(y),\psi(x-y)\right\} .
\]
Additionally, if $\lambda>0$ we define the product $\lambda\odot\phi$
as 
\[
\left(\lambda\odot\phi\right)(x)=\phi\left(\frac{x}{\lambda}\right).
\]

\end{defn}
On the level of log-concave functions, the operation $\oplus$ is
defined by 
\[
\left(f\oplus g\right)(x)=\sup_{y\in\R^{n}}\min\left\{ f(y),g(x-y)\right\} .
\]

For Definition \ref{def:quasi-sum} to make sense, we need to know
that $\phi\oplus\psi$ is convex whenever $\phi$ and $\psi$ are.
We will prove this, together with other properties of $\oplus$, in
section \ref{sec:Minkwoski-theorem}. For now, let us highlight the
main features of this operation.

First, one easily checks that $\odot$ really is the homothety operation
induced from $\oplus$. In particular, we have $2\odot\phi=\phi\oplus\phi$
for every $\phi\in\cvx$. Second, the operation $\oplus$ extends
the Minkowski addition on convex bodies, in the sense that 
\[
\o_{K}\oplus\o_{T}=\o_{K+T}.
\]
Third, the operation $\oplus$ is not so different from the more classic
inf-convolution $\square$. In fact, it follows from Proposition \ref{prop:old-new-relation}
that if $\phi$ and $\psi$ are positive, convex functions, then 
\[
\frac{1}{2}\left(\phi\square\psi\right)(x)\le\left(\phi\oplus\psi\right)(x)\le\left(\phi\square\psi\right)(x),
\]
 i.e. both operations agree up to a factor of 2. Furthermore, if $\phi$
is a positive, convex function and $K\in\kn$ is any convex set, then
\[
\left(\phi\oplus\o_{K}^{\infty}\right)(x)=\left(\phi\square\o_{K}^{\infty}\right)(x)=\inf_{y\in K}\phi(x-y),
\]
 so in this case both operations are exactly the same. 

However, our new addition has one critical advantage over the better
known inf-convolution: The volume functional $I(f)$ polarizes with
respect to $\oplus$. In other words, we have the following Minkowski-type
theorem:
\begin{thm*}
Fix $f_{1},f_{2},\ldots,f_{m}\in\lc$. Then the function $F:\left(\R^{+}\right)^{m}\to[0,\infty]$,
defined by 
\[
F(\epsilon_{1},\epsilon_{2},\ldots,\epsilon_{m})=\int\left[\left(\epsilon_{1}\odot f_{1}\right)\oplus\left(\epsilon_{2}\odot f_{2}\right)\oplus\cdots\oplus\left(\epsilon_{m}\odot f_{m}\right)\right]
\]
 is a homogenous polynomial of degree $n$, with non-negative coefficients.
\end{thm*}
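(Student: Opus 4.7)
The plan is to reduce the functional Minkowski theorem to the classical one for convex bodies via the layer-cake representation. The key observation, which I would verify first, is that the operations $\oplus$ and $\odot$ on $\lc$ interact with upper level sets exactly as Minkowski addition and homothety do on $\kn$. Concretely, writing $K_f(t):=\left\{ x:f(x)\geq t\right\}$, the definition $(f\oplus g)(x)=\sup_{y}\min\{f(y),g(x-y)\}$ immediately gives the inclusion $K_f(t)+K_g(t)\subseteq K_{f\oplus g}(t)$, and the reverse inclusion follows from upper semicontinuity of $f,g$ by an approximation argument using $K_f(t-\epsilon)+K_g(t-\epsilon)$ and taking intersections over $\epsilon\downarrow 0$ (closedness of the $K_f(t-\epsilon)$ is crucial). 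Similarly $K_{\lambda\odot f}(t)=\lambda\cdot K_f(t)$, and log-concavity of $f$ guarantees $K_f(t)\in\kn$ for every $t>0$.

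Given this, applying the layer-cake formula $\int h\,dx=\int_{0}^{\infty}\vol\bigl(\left\{ h\geq t\right\} \bigr)\,dt$ to $h=(\epsilon_{1}\odot f_{1})\oplus\cdots\oplus(\epsilon_{m}\odot f_{m})$ rewrites
\[
F(\epsilon_{1},\ldots,\epsilon_{m})=\int_{0}^{\infty}\vol\bigl(\epsilon_{1}K_{f_{1}}(t)+\cdots+\epsilon_{m}K_{f_{m}}(t)\bigr)\,dt.
\]
At every fixed $t$ the integrand is exactly the expression handled by the classical Minkowski theorem stated in the introduction (allowing the value $+\infty$), so it expands as
\[
\sum_{i_{1},\ldots,i_{n}=1}^{m}\epsilon_{i_{1}}\cdots\epsilon_{i_{n}}\,V\bigl(K_{f_{i_{1}}}(t),\ldots,K_{f_{i_{n}}}(t)\bigr).
\]
Since all terms are non-negative, Tonelli's theorem allows the finite sum to be pulled outside the integral, producing
\[
F(\epsilon_{1},\ldots,\epsilon_{m})=\sum_{i_{1},\ldots,i_{n}=1}^{m}\epsilon_{i_{1}}\cdots\epsilon_{i_{n}}\,c_{i_{1},\ldots,i_{n}},
\]
where the coefficients
\[
c_{i_{1},\ldots,i_{n}}=\int_{0}^{\infty}V\bigl(K_{f_{i_{1}}}(t),\ldots,K_{f_{i_{n}}}(t)\bigr)\,dt
\]
are non-negative because mixed volumes are non-negative. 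This is precisely the asserted homogeneous polynomial of degree $n$.

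The main obstacle is the level-set identity $K_{f\oplus g}(t)=K_{f}(t)+K_{g}(t)$: the $\subseteq$ direction requires that when the defining supremum is not attained one can still extract a point in the Minkowski sum, which needs upper semicontinuity and closedness of level sets. One also needs to confirm that $f\oplus g\in\lc$ (equivalently, $\phi\oplus\psi\in\cvx$), which the paper promises in the Minkowski-theorem section, so that the layer-cake formula applies to a measurable non-negative function. A secondary technical point is bookkeeping with $+\infty$ mixed volumes: the classical theorem permits unbounded $K_{f_{i}}(t)$, but one should check that the set of levels $t$ where some $K_{f_{i}}(t)$ is unbounded has measure zero (indeed, integrability of $f_{i}$ forces $\vol(K_{f_{i}}(t))<\infty$ for almost every $t$), so that each coefficient $c_{i_{1},\ldots,i_{n}}$ is well-defined and finite whenever $F$ is finite. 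Modulo these points, the argument is a one-line slicing reduction to the classical Minkowski theorem.
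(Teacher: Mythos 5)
Your overall strategy is identical to the paper's: integrate by level sets, use the fact that $\oplus$ and $\odot$ act on upper level sets as Minkowski sum and dilation, and then apply the classical Minkowski theorem at each level $t$. However, there is a genuine gap in your treatment of the level-set identity $\KU_{t}(f\oplus g)=\KU_{t}(f)+\KU_{t}(g)$. You claim the $\subseteq$ direction follows ``by an approximation argument using $K_f(t-\epsilon)+K_g(t-\epsilon)$ and taking intersections over $\epsilon\downarrow 0$,'' asserting that closedness of the level sets is what makes this work. This is not correct: the set identity can actually \emph{fail} without a compactness assumption, and the paper gives an explicit counterexample ($f(x)=\tfrac{\pi}{2}+\arctan x$, $g(x)=\tfrac{\pi}{2}-\arctan x$, where $\KU_{\pi}(f\oplus g)=\R$ while $\KU_{\pi}(f)+\KU_{\pi}(g)=\emptyset$). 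Your sketched argument does show $\KU_{t}(f\oplus g)\subseteq\bigcap_{\epsilon>0}\bigl[\KU_{t-\epsilon}(f)+\KU_{t-\epsilon}(g)\bigr]$, but passing from that intersection to $\KU_{t}(f)+\KU_{t}(g)$ requires extracting a convergent subsequence of approximate decompositions $x=y_\epsilon+z_\epsilon$, which needs boundedness (compactness), not merely closedness.

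The paper circumvents this by never insisting on the pointwise set identity. Instead it sandwiches
\[
\left\{ x:\ h(x)>t\right\} \subseteq\epsilon_{1}\KU_{t}(f_{1})+\cdots+\epsilon_{m}\KU_{t}(f_{m})\subseteq\KU_{t}(h),
\]
and then observes that $\left|\KU_{t}(h)\right|=\left|\left\{ h>t\right\} \right|$ for all but countably many $t$ (the level sets of a quasi-concave function are nested, so volume can only jump at countably many values of $t$). Hence the \emph{volume} identity holds for a.e.\ $t$, which is all the layer-cake integral requires. You would need to replace your approximation claim with this sandwiching argument (or alternatively restrict to the case of compact level sets and handle the general case by an a.e.\ reduction, noting that $\vol(\KU_{t}(f_{i}))<\infty$ for a.e.\ $t$ is still not the same as compactness). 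With that one repair, the rest of your proof — Tonelli to interchange the finite sum and the $t$-integral, non-negativity of mixed volumes, bookkeeping with $+\infty$ — matches the paper and is sound.
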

In complete analogy with the case of convex bodies, this theorem is
equivalent to the existence of a function 
\[
V:\lc^{n}\to[0,\infty]
\]
 which is symmetric, multilinear (with respect to $\oplus$, of course)
and satisfies $V(f,f,\ldots,f)=\int_{\R^{n}}f(x)dx$. We will call
the number $V(f_{1},f_{2},\ldots f_{n})$ the mixed integral of $f_{1},f_{2},\ldots,f_{n}$. 

When reading the proof of our Minkowski-type theorem, one can see
that log-concavity is never used in any real way. In fact, everything
we said until this point will remain true, if log-concave functions
are replaced with the more general quasi-concave functions:
\begin{defn}
A function $f:\R^{n}\to\R$ is called quasi-concave if 
\[
f(\lambda x+(1-\lambda)y)\ge\min\left\{ f(x),f(y)\right\} 
\]
 for every $x,y\in\R^{n}$ and $0<\lambda<1$. The class of all functions
$f:\R^{n}\to[0,\infty)$ which are upper semicontinuous and quasi-concave
functions will be denoted by $\qc$. 

Similarly, a function $\phi:\R^{n}\to\R$ is called quasi-convex if
\[
f\left(\lambda x+(1-\lambda)y\right)\le\max\left\{ f(x),f(y)\right\} .
\]
 We will not have a special notation for the class of quasi-convex
functions.
\end{defn}
Quasi-concave functions are frequently used by economists (see, e.g.,
\cite{simon_mathematics_1994}). One of the main reasons for this
is that quasi-concavity is an ``ordinal property''. Let us explain
this point: given a function $f:\R^{n}\to[0,\infty)$ and an increasing
function $\rho:[0,\infty)\to[0,\infty)$, we will say that the function
$\rho\circ f$ is a rescaling of $f$. Many important functions in
economy (e.g. the utility function) are ordinal, that is defined only
up to rescaling. Remember that even if a function $f$ is concave,
its rescaling $\rho\circ f$ need not be concave. Hence one cannot
talk, for example, about ``concave utility functions'' -- concavity
is not an ordinal property. In contrast, it is easy to check that
if $f$ is quasi-concave, every rescaling of it will be quasi-concave
as well. 

As far as we know, quasi-concave functions were never a serious object
of study from the convex geometry point of view. One of the main points
of this paper is to show that the realm of quasi-concave functions
is the natural setting for many results and theorems.

After we define mixed integrals in section \ref{sec:Minkwoski-theorem},
section \ref{sec:Inequalities} is devoted to proving many different
inequalities between these numbers. A sizable portion of classic convexity
theory involves proving inequalities between different mixed volumes.
For example, if $K\in\kn$ is a convex body, its surface area is defined
to be 
\[
S(K)=\lim_{\epsilon\to0^{+}}\frac{\vol\left(K+\epsilon D\right)-\vol\left(K\right)}{\epsilon},
\]
 where $D$ is the unit Euclidean ball. It is not hard to see that
$S(K)$ is a mixed volume. In fact, we have 
\[
S(K)=n\cdot V(\underbrace{K,K,\ldots,K}_{n-1\text{ times}},D).
\]
 The famous isoperimetric inequality states that out of all bodies
with fixed volume, the Euclidean ball has the minimal surface area.
More quantitatively, it is usually written as 
\[
S(K)\ge n\cdot\vol(D)^{\frac{1}{n}}\cdot\vol\left(K\right)^{\frac{n-1}{n}}.
\]
 A proof of the isoperimetric inequality, as well as all the other
inequalities of mixed volumes which appear in this paper, can be found
in \cite{schneider_convex_1993}.

In section \ref{sec:Inequalities} we discuss the question of how
to prove a generalization of this theorem to quasi-concave functions.
We define the surface area of a quasi-concave function $f$ to be
\[
S(f)=n\cdot V(\underbrace{f,f,\ldots,f}_{n-1\text{ times}},\o_{D}).
\]
For log-concave functions the surface area $S(f)$ was discovered
independently by Colesanti (\cite{colesanti_what_2012}). We will
give more details about his work in Example \ref{example:quermass}.

Naively, we may try and bound $S(f)$ from below in terms of the integral
$\int f$. Unfortunately, we will see that no such bound can exist
for arbitrary quasi-concave functions.

Instead, we will employ another approach. For every $K\in\kn$ let
$K^{\ast}$ be the Euclidean ball with the same volume as $K$. Then
the isoperimetric inequality can be stated as $S(K)\ge S(K^{\ast})$
for every $K\in\kn.$ Similarly, if $f\in\qc$, we define $f^{\ast}\in\qc$
to be its symmetric decreasing rearrangement (see Definition \ref{def:SDR},
and see \cite{lieb_analysis_1997} for more information. For the purpose
of this introduction we will assume that $f$ is ``nice'' enough
for $f^{\ast}$ to be well-defined). We will prove the following isoperimetric
inequality:
\begin{thm*}
For every $f\in\qc$ we have $S(f)\ge S(f^{\ast}),$ with equality
if and only if $f$ is rotation invariant.
\end{thm*}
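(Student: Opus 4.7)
The plan is to reduce the inequality to the classical isoperimetric inequality for convex bodies via a layer-cake decomposition of the mixed integral. The key observation, which I expect to have been established in Section~\ref{sec:Minkwoski-theorem} when mixed integrals are introduced, is the identity
\[
V(f_{1},\ldots,f_{n})=\int_{0}^{\infty}V(\{f_{1}\ge t\},\ldots,\{f_{n}\ge t\})\,dt.
\]
This follows from two elementary facts about the operations $\oplus$ and $\odot$: a direct check of the definitions shows $\{f\oplus g\ge t\}=\{f\ge t\}+\{g\ge t\}$ and $\{\lambda\odot f\ge t\}=\lambda\{f\ge t\}$, so combining these with the layer-cake formula $\int g=\int_{0}^{\infty}\vol(\{g\ge t\})\,dt$ and expanding Minkowski's polynomial on each superlevel set, one reads off the polarization coefficient-by-coefficient after integrating in $t$.

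Specializing this identity to the surface area, note that $\{\o_{D}\ge t\}=D$ for $t\in(0,1]$ and is empty for $t>1$, so the integrand vanishes for $t>1$ and
\[
S(f)=n\,V(f,\ldots,f,\o_{D})=\int_{0}^{1}n\,V(\{f\ge t\},\ldots,\{f\ge t\},D)\,dt=\int_{0}^{1}S(\{f\ge t\})\,dt.
\]
Applying the same identity to $f^{\ast}$ and using the defining property $\{f^{\ast}\ge t\}=\{f\ge t\}^{\ast}$ of the symmetric decreasing rearrangement, we get $S(f^{\ast})=\int_{0}^{1}S(\{f\ge t\}^{\ast})\,dt$. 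The classical isoperimetric inequality $S(K)\ge S(K^{\ast})$ applied pointwise in $t$ and integrated then yields $S(f)\ge S(f^{\ast})$.

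For the equality case, $S(f)=S(f^{\ast})$ forces $S(\{f\ge t\})=S(\{f\ge t\}^{\ast})$ for almost every $t\in(0,1]$, and by the rigidity of the classical isoperimetric inequality each such superlevel set is a Euclidean ball. The main obstacle is to promote \emph{``each relevant superlevel set is a ball''} to \emph{``$f$ is rotation invariant''}: since the superlevel sets are nested convex sets depending upper semi-continuously on $t$, I expect to identify a canonical center by letting $t$ increase toward $\sup f$ (where the balls shrink to a point), and then to argue that the nesting $B(x_{t_{2}},r_{t_{2}})\subset B(x_{t_{1}},r_{t_{1}})$ combined with the continuity of $t\mapsto(x_{t},r_{t})$ and the prescribed volumes forces the centers to be constant, so that all superlevel sets are concentric balls. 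Extending to $t$ outside $(0,1]$ (using monotonicity of the family of superlevel sets) would then complete the characterization.
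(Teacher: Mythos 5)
Your proof of the inequality $S(f)\ge S(f^{\ast})$ is precisely the paper's argument: use $V(f_{1},\ldots,f_{n})=\int_{0}^{\infty}V(\KU_{t}(f_{1}),\ldots,\KU_{t}(f_{n}))\,dt$ (this is Definition~\ref{def:mixed-integrals}, established in Theorem~\ref{thm:polynomial}) to write $S(f)=\int_{0}^{1}S(\KU_{t}(f))\,dt$, apply the classical isoperimetric inequality pointwise in $t$, and use $\KU_{t}(f^{\ast})=\KU_{t}(f)^{\ast}$. That part is correct and matches the paper exactly.

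For the equality case, you correctly sense a gap that the paper in fact glosses over (its proof passes directly from ``each $\KU_{t}(f)$ is a ball'' to ``$f$ is rotation invariant''). But your proposed repair does not work: nesting of balls plus continuity of $t\mapsto(x_{t},r_{t})$ does \emph{not} force the centers to be constant. A family of nested balls can have centers drifting at exactly the rate the radii shrink (internal tangency). Concretely, in $\R^{2}$ set
\[
\KU_{t}(f)=B\bigl((1-t,0),\,1-t\bigr),\qquad t\in(0,1].
\]
For $t_{2}>t_{1}$ the distance between centers equals the difference of radii, so $B\bigl((1-t_{2},0),1-t_{2}\bigr)\subseteq B\bigl((1-t_{1},0),1-t_{1}\bigr)$; all the balls contain the origin, they shrink to $\{0\}$ as $t\to1$, and the resulting $f$ (namely $f(x)=\max\bigl(0,\,1-\tfrac{|x|^{2}}{2x_{1}}\bigr)$ for $x_{1}>0$, $f(0)=1$, $f=0$ elsewhere) is upper semicontinuous, quasi-concave, and geometric. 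Yet $f$ is not rotation invariant, its centers $x_{t}=(1-t,0)$ are not constant, and since $\KU_{t}(f)^{\ast}=B(0,1-t)$ has the same perimeter as $\KU_{t}(f)$, one computes $S(f)=S(f^{\ast})=\pi$. This shows both that your ``canonical center'' argument cannot succeed in general, and that the equality condition as stated (in both your proposal and the paper) is too strong: equality $S(f)=S(f^{\ast})$ characterizes exactly those $f$ whose superlevel sets are all Euclidean balls, which is strictly weaker than rotation invariance. If you want rotation invariance to be the conclusion, you would need an extra hypothesis forcing the centers to agree (for example, central symmetry of $f$, or a strict rather than tangent nesting of level sets).
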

This inequality generalizes the classic isoperimetric inequality.
It can also be useful for general quasi-concave functions, because
it reduces an $n$-dimensional problem to a 1-dimensional one -- the
function $f^{\ast}$ is rotation invariant, and hence essentially
``one dimensional''. However, we stress again that in general, this
inequality does not yield a lower bound for $S(f)$ in terms of $\int f$,
as such a bound is impossible.

In section \ref{sec:Inequalities} we generalize many important inequalities
by rewriting them as rearrangement inequalities. In particular, we
extend both the Brunn-Minkowski theorem (even in its general form
- see Theorem \ref{prop:gen-BM}) and the Alexandrov-Fenchel inequality
(Theorem \ref{prop:AF}). As the statements are rather involved, we
will not reproduce them here. Instead, we will present an elegant
corollary of Theorem \ref{prop:AF}:
\begin{thm*}
For every $f_{1},f_{2},\ldots,f_{n}\in\qc$ we have 
\[
V(f_{1},f_{2},\ldots,f_{n})\ge V(f_{1}^{\ast},f_{2}^{\ast},\ldots f_{n}^{\ast}).
\]
 
\end{thm*}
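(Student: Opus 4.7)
The plan is to reduce the statement to a classical rearrangement inequality for mixed volumes via a layer-cake representation of the mixed integral. First, I would establish the identity
\[
V(f_1,f_2,\ldots,f_n)=\int_0^\infty V\bigl(K_1(t),K_2(t),\ldots,K_n(t)\bigr)\,dt,
\]
where $K_i(t)=\{x\in\R^n:f_i(x)\ge t\}$ is the (closed, convex) $t$-superlevel set of $f_i$. This is obtained by applying the layer-cake formula $\int h=\int_0^\infty\vol(\{h\ge t\})\,dt$ to $F(\epsilon_1,\ldots,\epsilon_n)=\int\bigl[(\epsilon_1\odot f_1)\oplus\cdots\oplus(\epsilon_n\odot f_n)\bigr]$, together with the set-theoretic identities $\{(\epsilon\odot f)\ge t\}=\epsilon\cdot\{f\ge t\}$ and $\{f\oplus g\ge t\}=\{f\ge t\}+\{g\ge t\}$, both immediate from the definitions. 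This rewrites the inner integrand as the classical Minkowski polynomial of the $K_i(t)$, and identifying coefficients in $\epsilon$ on both sides gives the formula.

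Second, I would invoke the classical consequence of the Alexandrov--Fenchel inequality that for arbitrary $K_1,\ldots,K_n\in\kn$,
\[
V(K_1,K_2,\ldots,K_n)\ge V(K_1^\ast,K_2^\ast,\ldots,K_n^\ast),
\]
where $K_i^\ast$ denotes the Euclidean ball centered at the origin with $\vol(K_i^\ast)=\vol(K_i)$. Indeed, iterating Alexandrov--Fenchel gives $V(K_1,\ldots,K_n)^n\ge\prod_i\vol(K_i)$, while a direct computation for concentric Euclidean balls yields $V(K_1^\ast,\ldots,K_n^\ast)^n=\prod_i\vol(K_i^\ast)=\prod_i\vol(K_i)$, so that the Alexandrov--Fenchel bound is saturated by the rearranged bodies.

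Third, since by construction $\{f^\ast\ge t\}=\bigl(\{f\ge t\}\bigr)^\ast$, substituting into the layer-cake representation gives
\[
V(f_1,\ldots,f_n)=\int_0^\infty V\bigl(K_1(t),\ldots,K_n(t)\bigr)\,dt\ge\int_0^\infty V\bigl(K_1(t)^\ast,\ldots,K_n(t)^\ast\bigr)\,dt=V(f_1^\ast,\ldots,f_n^\ast).
\]
If Theorem \ref{prop:AF} has already recast Alexandrov--Fenchel at the functional level as a rearrangement inequality, the corollary is even more direct, and the layer-cake identity merely serves as a dictionary between the functional and set-theoretic pictures.

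The principal technical obstacle is the first step: rigorously justifying the layer-cake representation when some superlevel sets $K_i(t)$ are unbounded or when $V(K_1(t),\ldots,K_n(t))$ takes the value $+\infty$. One must verify measurability of the integrand in $t$ and legitimately interchange coefficient-extraction in $\epsilon$ with integration in $t$. Under the standing finiteness assumption $\int f_i<\infty$, however, $K_i(t)$ is bounded for almost every $t>0$, and the remaining details are routine.
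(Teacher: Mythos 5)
Your proposal is correct and takes essentially the same route as the paper: the paper defines $V(f_1,\ldots,f_n)=\int_0^\infty V(\KU_t(f_1),\ldots,\KU_t(f_n))\,dt$ (your layer-cake identity, established there as Theorem~\ref{thm:polynomial} and Definition~\ref{def:mixed-integrals}), then in Theorem~\ref{prop:AF} applies the iterated Alexandrov--Fenchel inequality $V(K_1,\ldots,K_n)\ge\prod\vol(K_i)^{1/n}=V(K_1^\ast,\ldots,K_n^\ast)$ to each slice and integrates in $t$, exactly as you do; the corollary in question is just the case $m=n$, $\Phi=\vol$ of that theorem. Two small caveats worth noting: the identity $\KU_t(f\oplus g)=\KU_t(f)+\KU_t(g)$ is not literally ``immediate'' but requires the compactness of level sets (the paper's Proposition~\ref{prop:sum-by-levels}), and accordingly the paper states this corollary for $f_i\in\qco$ with compact level sets rather than for all of $\qc$ — but you flag exactly this issue yourself, so the argument is sound.
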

In section \ref{sec:LC-inequalities}, we once again restrict our
attention to log-concave functions. We demonstrate how one can use
the results of section \ref{sec:Inequalities}, together with a 1-dimensional
analysis, to prove sharp numeric inequalities between mixed integrals.
More specifically, we prove the following Alexandrov type inequality:
\begin{thm*}
Define $g(x)=e^{-\left|x\right|}$. For every $f\in\lc$ with $\max f=1$
and every integers $0\le k<m<n$, we have 
\[
\left(\frac{W_{k}(f)}{W_{k}(g)}\right)^{\frac{1}{n-k}}\le\left(\frac{W_{m}(f)}{W_{m}(g)}\right)^{\frac{1}{n-m}}
\]
 with equality if and only if $f(x)=e^{-c\left|x\right|}$ for some
$c>0$.
\end{thm*}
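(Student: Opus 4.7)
The plan is to reduce the $n$-dimensional claim to a one-dimensional Berwald-type moment inequality via a layer-cake representation. Using the identity
\[
\left\{(\epsilon_{1}\odot f_{1})\oplus(\epsilon_{2}\odot f_{2})\ge t\right\}=\epsilon_{1}\left\{f_{1}\ge t\right\}+\epsilon_{2}\left\{f_{2}\ge t\right\}
\]
and the polynomial identity defining the mixed integrals, one obtains $W_{k}(f)=\int_{0}^{1}W_{k}^{\mathrm{cl}}(\{f\ge t\})\,dt$, where $W_{k}^{\mathrm{cl}}$ denotes the classical quermassintegral of a convex body. Writing $f=e^{-\phi}$ with $\min\phi=0$ (so $\max f=1$) and substituting $t=e^{-s}$ yields $W_{k}(f)=\int_{0}^{\infty}W_{k}^{\mathrm{cl}}(\{\phi\le s\})\,e^{-s}\,ds$. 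Applied to $g(x)=e^{-|x|}$, whose sublevel sets $\{|\cdot|\le s\}$ are the Euclidean balls $sD$, this gives the clean formula $W_{k}(g)=(n-k)!\,W_{k}^{\mathrm{cl}}(D)$.

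Next, define the normalized profile $\rho_{k}(s)=\bigl(W_{k}^{\mathrm{cl}}(\{\phi\le s\})/W_{k}^{\mathrm{cl}}(D)\bigr)^{1/(n-k)}$. Two properties are central. First, $\rho_{k}$ is concave on $[0,\infty)$ with $\rho_{k}(0)\ge 0$: this follows from the set-valued concavity $\{\phi\le\lambda s_{1}+(1-\lambda)s_{2}\}\supseteq\lambda\{\phi\le s_{1}\}+(1-\lambda)\{\phi\le s_{2}\}$ (immediate from convexity of $\phi$), combined with the Brunn-Minkowski inequality for quermassintegrals. Second, for each fixed $s$ we have $\rho_{k}(s)\le\rho_{m}(s)$ whenever $k\le m$; this is precisely the classical Alexandrov inequality applied to the convex body $\{\phi\le s\}$.

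Using $\rho_{k}\le\rho_{m}$ in the first factor, the desired inequality is implied by
\[
\left(\frac{\int_{0}^{\infty}\rho_{m}(s)^{n-k}e^{-s}\,ds}{(n-k)!}\right)^{1/(n-k)}\le\left(\frac{\int_{0}^{\infty}\rho_{m}(s)^{n-m}e^{-s}\,ds}{(n-m)!}\right)^{1/(n-m)},
\]
so the claim reduces to the following one-dimensional Berwald-type inequality for the exponential measure: for every non-negative concave function $B$ on $[0,\infty)$ with $B(0)\ge 0$ and every $0<p<q$,
\[
\left(\frac{1}{\Gamma(q+1)}\int_{0}^{\infty}B(s)^{q}e^{-s}\,ds\right)^{1/q}\le\left(\frac{1}{\Gamma(p+1)}\int_{0}^{\infty}B(s)^{p}e^{-s}\,ds\right)^{1/p},
\]
with equality iff $B(s)=cs$ for some $c\ge 0$. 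Note that linear $B(s)=cs$ saturates the normalized ratio for every exponent simultaneously, by the direct computation $\int_{0}^{\infty}(cs)^{r}e^{-s}\,ds=c^{r}\Gamma(r+1)$.

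The main obstacle is the 1D inequality above. The approach would be either variational --- showing that among non-negative concave $B$ with $\int B^{p}e^{-s}\,ds$ prescribed, the integral $\int B^{q}e^{-s}\,ds$ is maximized on the extremal rays of the concavity cone, namely the linear functions $B(s)=cs$ --- or via Pr\'ekopa-Leindler: the hypograph $\{(s,t):0\le t\le B(s),\ s\ge 0\}$ is convex, so the ``tail function'' $M(t)=\int\mathbf{1}_{\{B(s)\ge t\}}e^{-s}\,ds$ is log-concave in $t$, and the inequality can be rephrased as a Berwald-type comparison of moments of a random variable with log-concave survival function. The equality analysis then propagates back through the chain: equality in the 1D inequality forces $\rho_{m}(s)=cs$, while equality in $\rho_{k}\le\rho_{m}$ (from the equality case of classical Alexandrov) forces $\{\phi\le s\}$ to be a homothet of $D$ for each $s$; combining these gives $\{\phi\le s\}=csD$, i.e. $\phi(x)=|x|/c$, and hence $f(x)=e^{-c'|x|}$ with $c'=1/c$, as required.
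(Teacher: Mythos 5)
Your plan is correct, and it organizes the argument differently from the paper's, but the key one-dimensional lemma is left as a sketch. The paper first symmetrizes, replacing $f$ by the rotation-invariant $f^{W_k}$ (via Propositions \ref{prop:Alexandrov} and \ref{prop:fs-log-concave}) and then applies Borell's moment lemma (Proposition \ref{prop:moments}) to the radial profile $h(r)$. You instead compare sublevel sets directly: classical Alexandrov gives $\rho_k\le\rho_m$ pointwise, concavity of $\rho_m$ follows from Brunn--Minkowski together with the convexity of $\phi$ (this is where log-concavity of $f$ enters in your version, and it is essential --- Remark \ref{rem:counter-isoperimetric} shows the theorem fails for merely quasi-concave $f$), and the claim reduces to a Berwald-type inequality for the weight $e^{-s}\,ds$. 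That one-dimensional inequality is true, and your Pr\'ekopa--Leindler sketch does work, but when carried through it invokes the very same Proposition \ref{prop:moments}: the survival function $M(t)=\int_0^\infty\mathbf{1}_{\{B(s)\ge t\}}e^{-s}\,ds$ is log-concave on $[0,\infty)$ with $M(0)=1$; writing $\frac{1}{\Gamma(r+1)}\int_0^\infty B(s)^r e^{-s}\,ds=\frac{1}{\Gamma(r)}\int_0^\infty t^{r-1}M(t)\,dt=\chi(r-1)$ and applying Proposition \ref{prop:moments} to $M$ (so $\chi$ is log-concave on $(-1,\infty)$ with $\chi(-1^+)=M(0)=1$) yields $\chi(p-1)^{1/p}\ge\chi(q-1)^{1/q}$ by interpolating between $-1$, $p-1$ and $q-1$, with equality iff $M(t)=e^{-ct}$, i.e.\ $B(s)=s/c$. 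So both proofs run on the same 1D engine, applied to different functions ($M$ versus the radial profile $h$); yours bypasses the symmetrization machinery of Section \ref{sec:Inequalities} at the cost of a less explicit reduction, while the paper's route makes the radial integral entirely concrete. Your equality analysis is sound, once one adds that the nested balls $\{\phi\le s\}$, shrinking to $\{0\}$, are automatically concentric at the origin.
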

Here the numbers $W_{k}(f)$ are the quermassintegrals of $f$, defined
by 
\[
W_{k}(f)=V(\underbrace{f,f,\ldots f}_{n-k\text{ times}},\underbrace{\o_{D},\o_{D},\ldots,\o_{D}}_{k\text{ times}})
\]
 (see Example \ref{example:quermass}). In particular, the case $k=0$,
$m=1$ gives a sharp isoperimetric inequality:
\begin{thm*}
For every $f\in\lc$ with $\max f=1$ we have
\[
S(f)\ge\left(\int f\right)^{\frac{n-1}{n}}\cdot\frac{S(g)}{\left(\int g\right)^{\frac{n-1}{n}}}
\]
 with equality if and only if $f(x)=e^{-c\left|x\right|}$ for some
$c>0$. 
\end{thm*}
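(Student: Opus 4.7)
The plan is to derive this statement as the special case $k=0$, $m=1$ of the Alexandrov-type inequality for quermassintegrals stated immediately above it, so the work is essentially bookkeeping rather than new analysis. First I would unpack the two relevant quermassintegrals from their definition. Since $W_0(f)=V(f,f,\ldots,f)$, the multilinear--polarization identity $V(f,f,\ldots,f)=\int f$ gives $W_0(f)=\int f$ (and likewise $W_0(g)=\int g$). For the next one, $W_1(f)=V(f,f,\ldots,f,\o_D)$, the definition of surface area stated in the introduction,
\[
S(f)=n\cdot V(f,f,\ldots,f,\o_D),
\]
immediately yields $W_1(f)=\tfrac{1}{n}S(f)$, and analogously $W_1(g)=\tfrac{1}{n}S(g)$.

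Next I would instantiate the Alexandrov-type inequality with $k=0$ and $m=1$:
\[
\left(\frac{W_0(f)}{W_0(g)}\right)^{\frac{1}{n}}\le \left(\frac{W_1(f)}{W_1(g)}\right)^{\frac{1}{n-1}}.
\]
Substituting the identifications above, the factors of $1/n$ cancel on the right-hand side, leaving
\[
\left(\frac{\int f}{\int g}\right)^{\frac{1}{n}}\le \left(\frac{S(f)}{S(g)}\right)^{\frac{1}{n-1}}.
\]
Raising both sides to the power $n-1$ and isolating $S(f)$ produces exactly the desired bound
\[
S(f)\ge \left(\int f\right)^{\frac{n-1}{n}}\cdot \frac{S(g)}{\left(\int g\right)^{\frac{n-1}{n}}}.
\]

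Finally, for the equality characterization, I would simply inherit it from the Alexandrov-type theorem: equality there (for any admissible pair $k<m$, in particular $k=0,m=1$) holds precisely when $f(x)=e^{-c|x|}$ for some $c>0$, which transfers verbatim to the present statement. There is essentially no obstacle of substance here; the only point that needs a line of justification is the identity $W_0(f)=\int f$, which uses that $V$ polarizes the integral, and the identification $S(f)=n\,W_1(f)$, both of which are recorded earlier in the paper. The real content has already been absorbed into the Alexandrov-type inequality, whose proof (combining the rearrangement form of Alexandrov--Fenchel from Section~\ref{sec:Inequalities} with a one-dimensional analysis) is where any difficulty lies.
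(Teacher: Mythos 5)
Your proof is correct and follows exactly the route the paper takes: the paper derives this proposition in one line as the $k=0$, $m=1$ case of the Alexandrov-type inequality for quermassintegrals (Theorem \ref{thm:lc-alexandrov}), using $W_0(f)=\int f$ and $S(f)=n\,W_1(f)$. Your write-up simply makes the algebra explicit.
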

Finally, in section \ref{sec:Rescaling}, we revisit the notion of
rescaling. Consider for example the Brunn-Minkowski inequality: it
states that for every $A,B\in\kn$ we have 
\[
\vol\left(A+B\right)^{\frac{1}{n}}\ge\vol(A)^{\frac{1}{n}}+\vol(B)^{\frac{1}{n}}.
\]
 The most obvious way to generalize this inequality to the realm of
quasi-concave functions is to ask whether 
\[
\left[\int\left(f\oplus g\right)\right]^{\frac{1}{n}}\ge\left[\int f\right]^{\frac{1}{n}}+\left[\int g\right]^{\frac{1}{n}}
\]
 for arbitrary functions $f,g\in\qc$. Unfortunately, this is false,
just like the most naive way to generalize the isoperimetric inequality
turned out to be false. One way we already discussed to remedy the
situation is to reinterpret the Brunn-Minkowski inequality as a rearrangement
inequality. The resulting inequality will read 
\[
\left(f\oplus g\right)^{\ast}\ge f^{\ast}\oplus g^{\ast}
\]
 (see Proposition \ref{prop:BM} for an exact statement and a detailed
discussion). However, there is also a second way. Remember that quasi-concave
functions are often ordinal functions, i.e. functions defined only
up to rescaling. In such a case, we can ask a more delicate question:
Is it possible to choose rescalings of $f$ and $g$ in such a way
that the Brunn-Minkowski inequality will hold? Often, the answer is
``yes'':
\begin{thm*}
Assume $f$ and $g$ are ``sufficiently nice'' quasi-concave functions.
Then one can rescale $f$ to a function $\tilde{f}$ in such a way
that 
\[
\left[\int\left(\tilde{f}\oplus g\right)\right]^{\frac{1}{n}}\ge\left[\int\tilde{f}\right]^{\frac{1}{n}}+\left[\int g\right]^{\frac{1}{n}}.
\]
 
\end{thm*}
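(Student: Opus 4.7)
The plan is to pass from the functional inequality to one on level sets via the layer cake formula, apply the classical Brunn-Minkowski inequality level by level, and then use the freedom in rescaling $f$ to turn a one-dimensional Minkowski inequality (which a priori goes the wrong way) into an equality.

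Concretely, I would set $u(t) = \vol\left(\{f \ge t\}\right)^{1/n}$ and $v(t) = \vol\left(\{g \ge t\}\right)^{1/n}$, both decreasing in $t$. For upper semicontinuous quasi-concave functions with compact level sets one has $\{f \oplus g \ge t\} = \{f \ge t\} + \{g \ge t\}$, so by layer cake and Brunn-Minkowski,
\[
\int(f \oplus g) \ge \int_0^\infty \left(u(t) + v(t)\right)^n \, dt,
\]
while $\int f = \int_0^\infty u(t)^n \, dt$ and $\int g = \int_0^\infty v(t)^n \, dt$. After extracting $n$-th roots the desired inequality reduces to $\|u+v\|_n \ge \|u\|_n + \|v\|_n$ in $L^n(0,\infty)$, which is the reverse of Minkowski's inequality and hence cannot hold in general; equality (which is all we need) occurs precisely when $u$ and $v$ are proportional.

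The second step is therefore to construct an increasing rescaling $\rho$ of the range of $f$ so that $\tilde{f} = \rho \circ f$ satisfies $\tilde{u} = c v$ for some $c > 0$. Since $\tilde{u}(t) = u\left(\rho^{-1}(t)\right)$, the proportionality requirement forces
\[
\rho(s) = v^{-1}\!\left(\frac{u(s)}{c}\right),
\]
which is increasing as the composition of two decreasing functions, provided $v$ is strictly decreasing and continuous on the relevant interval and $c$ is chosen so that $u(s)/c$ lies in the range of $v$ for every $s$ of interest. A natural choice is $c = u(0^{+})/v(0^{+})$ whenever these limits are finite and nonzero, with a routine truncation or extension at the endpoints if needed.

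With this rescaling in place $\tilde{u} = c v$, so $\int \tilde{f} = c^n \int g$ and
\[
\int(\tilde{f} \oplus g) \ge \int_0^\infty \left((c+1) v(t)\right)^n \, dt = (c+1)^n \int g.
\]
Taking $n$-th roots yields
\[
\left[\int(\tilde{f} \oplus g)\right]^{1/n} \ge (c+1)\left[\int g\right]^{1/n} = \left[\int \tilde{f}\right]^{1/n} + \left[\int g\right]^{1/n}.
\]
The main obstacle is precisely the construction of $\rho$, which is what the ``sufficiently nice'' hypothesis must secure: one needs $v$ (and, for flexibility, $u$) to be strictly monotone and continuous on the relevant interval, equivalently $g$ should have no flat pieces in its level sets, and the supports and maxima of $f$ and $g$ must be compatible. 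Routine examples such as $g$ strictly positive and continuous on a bounded convex domain satisfy these conditions automatically; a fully general statement would likely be obtained by an approximation argument from a dense subclass.
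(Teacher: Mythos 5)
Your proof is correct and is essentially the paper's argument: you rescale $f$ so that its volume-radius profile $u(t)=\vol(\KU_t(\tilde f))^{1/n}$ becomes proportional to $v(t)=\vol(\KU_t(g))^{1/n}$, which is exactly the paper's choice $\alpha=\phi_g^{-1}\circ\phi_f$ with $\phi_f(t)=\vol\left(\KU_t(f)\right)=u(t)^n$, and your free constant $c$ is explicitly acknowledged in a remark following the paper's proof. The paper formalizes ``sufficiently nice'' as a class of \emph{regular} functions for which a technical lemma shows $\phi_f$ is a continuous decreasing bijection $[0,1]\to[0,\infty]$ (so $\rho=v^{-1}(u/c)$ is well defined for every $c>0$, removing the endpoint worries you raise), and it proves the statement for an arbitrary size functional of degree $k$ rather than just the volume.
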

Of course, the exact definition of ``sufficiently nice'' will be
given in section \ref{sec:Rescaling}. Section \ref{sec:Rescaling}
also contains a ``rescaled'' version of the Alexandrov-Fenchel inequality,
which we will not produce here.

\section{\label{sec:Minkwoski-theorem}Minkowski theorem for quasi-concave
functions}

The main goal of this section is to establish the various properties
of the addition $\oplus$ from Definition \ref{def:quasi-sum}, including
a Minkowski type theorem. Remember that the sum $f\oplus g$ of two
quasi-concave functions is defined by 
\[
\left(f\oplus g\right)(x)=\sup_{y\in\R^{n}}\min\left\{ f(y),g(x-y)\right\} .
\]

The first thing we do is give an alternative, more intuitive definition
for $\oplus$. We start by defining
\begin{defn}
\label{def:level-sets}For a quasi-convex function $\phi:\R^{n}\to\R$
and $t\in\R$, we define 
\[
\KL_{t}(\phi)=\left\{ x\in\R^{n}:\ \phi(x)\le t\right\} .
\]
 Similarly for $f\in\qc$ and $t>0$, we define 
\[
\KU_{t}(f)=\left\{ x\in\R^{n}:\ f(x)\ge t\right\} .
\]
 
\end{defn}
Since $\phi$ is quasi-convex, its lower level sets $\KL_{t}(\phi)$
are convex. If it also lower semicontinuous, the sets $\KL_{t}(\phi)$
are closed as well. Therefore $\KL_{t}(\phi)\in\kn$, and for the
similar reasons $\KU_{t}(f)\in\kn$ for $f\in\qc$. We can now explain
our addition in terms of level sets:
\begin{prop}
\label{prop:sum-by-levels}Assume $f,g\in\qc$ and the level sets
$\KU_{t}(f)$ are compact for all $t>0$. Then we have 
\[
\KU_{t}(f\oplus g)=\KU_{t}(f)+\KU_{t}(g).
\]
 Similarly, if $\lambda>0$, and with no compactness assumption, we
get 
\[
\KU_{t}(\lambda\odot f)=\lambda\cdot\KU_{t}(f).
\]

\end{prop}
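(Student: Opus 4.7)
The plan is to treat the homothety statement first, since it is essentially immediate. By the change of variables $y=x/\lambda$ we have
\[
x\in\KU_{t}(\lambda\odot f)\iff f(x/\lambda)\ge t\iff x/\lambda\in\KU_{t}(f)\iff x\in\lambda\cdot\KU_{t}(f),
\]
which uses nothing about $f$ beyond the definition of $\lambda\odot f$, and in particular no compactness.

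For the addition formula, I would prove the two inclusions separately. The inclusion $\KU_{t}(f)+\KU_{t}(g)\subseteq\KU_{t}(f\oplus g)$ is the easy direction and also needs no compactness: if $x=y+z$ with $f(y)\ge t$ and $g(z)\ge t$, then plugging this choice of $y$ into the sup defining $f\oplus g$ gives $(f\oplus g)(x)\ge\min\{f(y),g(z)\}\ge t$, so $x\in\KU_{t}(f\oplus g)$.

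The reverse inclusion $\KU_{t}(f\oplus g)\subseteq\KU_{t}(f)+\KU_{t}(g)$ is where the compactness hypothesis and upper semicontinuity must be used. Starting from $x$ with $(f\oplus g)(x)\ge t$, I would choose a sequence $s_{n}\uparrow t$ and, by the definition of the supremum, pick $y_{n}\in\R^{n}$ with $f(y_{n})\ge s_{n}$ and $g(x-y_{n})\ge s_{n}$. All these $y_{n}$ lie in $\KU_{s_{1}}(f)$, which is compact by assumption, so a subsequence $y_{n_{k}}\to y^{*}$ converges. Upper semicontinuity of $f$ gives $f(y^{*})\ge\limsup_{k}f(y_{n_{k}})\ge t$, and upper semicontinuity of $g$ applied to $x-y_{n_{k}}\to x-y^{*}$ gives $g(x-y^{*})\ge t$. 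Hence $y^{*}\in\KU_{t}(f)$ and $x-y^{*}\in\KU_{t}(g)$, which writes $x$ as an element of the Minkowski sum.

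The only real obstacle is that the supremum in the definition of $f\oplus g$ need not be attained, so a naive ``pick a maximizer $y$'' argument fails. This is exactly the role of the two hypotheses: compactness of the level sets of $f$ confines the approximate maximizers to a compact set, and upper semicontinuity of $f$ and $g$ upgrades the strict inequalities $f(y_{n})\ge s_{n}<t$ to the desired $f(y^{*})\ge t$ in the limit. Thus the proof is a standard Weierstrass-type extraction, and no further machinery is needed.
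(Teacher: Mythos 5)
Your proof is correct and follows essentially the same route as the paper's: the easy inclusion by direct substitution into the supremum, and the hard inclusion by extracting a convergent subsequence of approximate maximizers inside a compact sublevel set of $f$ and passing to the limit via upper semicontinuity of $f$ and $g$. The only cosmetic difference is that you use a general sequence $s_{n}\uparrow t$ where the paper uses $t-\tfrac{1}{m}$ and fixes $\KU_{t-1/m_{0}}(f)$ as the compact set; the structure of the argument is identical.
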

For lower-semicontinuous quasi-convex functions we have a similar
result, with the lower level sets $\KL_{t}(\phi)$ playing the role
of the upper level sets $\KU_{t}(f)$.
\begin{proof}
If $y_{0}\in\KU_{t}(f)$ and $z_{0}\in\KU_{t}(g)$ then 
\begin{eqnarray*}
(f\oplus g)(y_{0}+z_{0}) & = & \sup_{y\in\R^{n}}\min\left\{ f(y),g(y_{0}+z_{0}-y)\right\} \\
 & \ge & \min\left\{ f(y_{0}),g(y_{0}+z_{0}-y_{0})\right\} )\\
 & \ge & \min\left\{ t,t\right\} =t,
\end{eqnarray*}
 so $y_{0}+z_{0}\in\KU_{t}(f+g)$, and it follows that 
\[
\KU_{t}(f\oplus g)\supseteq\KU_{t}(f)+\KU_{t}(g).
\]
 For the other inclusion, fix $t>0$ and assume $x_{0}\in\KU_{t}(f+g)$.
This means that 
\[
\left(f\oplus g\right)(x_{0})=\sup_{y\in\R^{n}}\min\left\{ f(y),g(x_{0}-y)\right\} \ge t,
\]
 so we can choose a sequence $\left\{ y_{m}\right\} _{m=1}^{\infty}$
such that $f(y_{m})>t-\frac{1}{m}$ and $g(x_{0}-y_{m})>t-\frac{1}{m}$.
Fix $m_{0}$ large enough that $t-\frac{1}{m_{0}}>0$. For every $m\ge m_{0}$
we have $y_{m}\in\KU_{t-\frac{1}{m_{0}}}(f)$, and since $\KU_{t-\frac{1}{m_{0}}}(f)$
is compact we can assume without loss of generality that $y_{m}\to y$
as $m\to\infty$. By the upper semi-continuity of $f$ and $g$ we
get 
\begin{eqnarray*}
f(y) & \ge & \limsup_{m\to\infty}f(y_{m})\ge\limsup_{m\to\infty}t-\frac{1}{m}=t\\
g(x_{0}-y) & \ge & \limsup_{m\to\infty}g(x_{0}-y_{m})\ge\limsup_{m\to\infty}t-\frac{1}{m}=t,
\end{eqnarray*}
 and then $x_{0}=y+\left(x_{0}-y\right)\in\KU_{t}(f)+\KU_{t}(g)$
. This proves the first assertion of the proposition. The assertion
$\KU_{t}(\lambda\odot f)=\lambda\cdot\KU_{t}(f)$ is trivial.
\end{proof}

The compactness assumption in Proposition \ref{prop:sum-by-levels}
is necessary. To see this, define $f,g\in\textrm{QC}\!\left(\R\right)$
by $f(x)=\frac{\pi}{2}+\arctan x$ and $g(x)=\frac{\pi}{2}-\arctan x$.
Then 
\[
\left(f\oplus g\right)(x)=\sup_{y\in\R^{n}}\min\left\{ \frac{\pi}{2}+\arctan y,\frac{\pi}{2}+\arctan\left(y-x\right)\right\} =\pi
\]
 for all $x$, and we have 
\[
\KU_{\pi}(f)+\KU_{\pi}(g)=\emptyset+\emptyset=\emptyset\ne\R=\KU_{\pi}(f\oplus g).
\]
This is a minor detail, however, as our main interest is in functions
$f\in\qc$ such that $0<\int f<\infty$, and for such functions the
level sets $\KU_{t}(f)$ are indeed compact. It is also easy to check
and useful to notice that even without any compactness assumptions,
we still have 
\[
\left\{ x:\ \left(f\oplus g\right)(x)>t\right\} \subseteq\KU_{t}(f)+\KU_{t}(g).
\]

We will now prove that the sum of convex functions is indeed convex:
\begin{prop}
For $\phi,\psi\in\cvx$ the function $\phi\oplus\psi$ is convex.
If, in addition, the sets $\KL_{t}(\phi)$ are compact, then $\phi\oplus\psi$
is lower semicontinuous, so $\phi\oplus\psi\in\cvx$.\end{prop}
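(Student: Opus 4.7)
The plan is to handle the two assertions separately.

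For convexity, I would argue directly from the definition. Fix $x_{1},x_{2}\in\R^{n}$ and $\lambda\in(0,1)$, and set $x=\lambda x_{1}+(1-\lambda)x_{2}$. Assuming (as we may) that both values $(\phi\oplus\psi)(x_{i})$ are finite, for any $\epsilon>0$ I pick $y_{1},y_{2}\in\R^{n}$ with $\max\{\phi(y_{i}),\psi(x_{i}-y_{i})\}\le(\phi\oplus\psi)(x_{i})+\epsilon$, and set $y=\lambda y_{1}+(1-\lambda)y_{2}$. Then convexity of $\phi$ gives $\phi(y)\le\lambda\phi(y_{1})+(1-\lambda)\phi(y_{2})$, and convexity of $\psi$ combined with $x-y=\lambda(x_{1}-y_{1})+(1-\lambda)(x_{2}-y_{2})$ gives $\psi(x-y)\le\lambda\psi(x_{1}-y_{1})+(1-\lambda)\psi(x_{2}-y_{2})$. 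Using the elementary inequality $\max\{\lambda a+(1-\lambda)b,\,\lambda c+(1-\lambda)d\}\le\lambda\max\{a,c\}+(1-\lambda)\max\{b,d\}$ yields
\[
\max\{\phi(y),\psi(x-y)\}\le\lambda(\phi\oplus\psi)(x_{1})+(1-\lambda)(\phi\oplus\psi)(x_{2})+\epsilon.
\]
Since the left-hand side is at least $(\phi\oplus\psi)(x)$, letting $\epsilon\to0$ proves convexity.

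For lower semicontinuity, the cleanest route is to establish the convex analog of Proposition \ref{prop:sum-by-levels}, namely
\[
\KL_{t}(\phi\oplus\psi)=\KL_{t}(\phi)+\KL_{t}(\psi)
\]
for every $t\in\R$. The inclusion $\supseteq$ is immediate: if $\phi(y_{0})\le t$ and $\psi(z_{0})\le t$ then $(\phi\oplus\psi)(y_{0}+z_{0})\le\max\{\phi(y_{0}),\psi(z_{0})\}\le t$. For the reverse inclusion, if $(\phi\oplus\psi)(x)\le t$ I choose a sequence $y_{m}$ with $\phi(y_{m})<t+\frac{1}{m}$ and $\psi(x-y_{m})<t+\frac{1}{m}$, so that each $y_{m}$ lies in the compact set $\KL_{t+1}(\phi)$. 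Passing to a convergent subsequence $y_{m}\to y$ and invoking the lower semicontinuity of $\phi$ and $\psi$ gives $\phi(y)\le t$ and $\psi(x-y)\le t$, so $x\in\KL_{t}(\phi)+\KL_{t}(\psi)$, mirroring the argument of Proposition \ref{prop:sum-by-levels} almost verbatim.

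Given this identity, lower semicontinuity follows because $\KL_{t}(\phi)$ is compact by hypothesis and $\KL_{t}(\psi)$ is closed (as $\psi\in\cvx$), hence their Minkowski sum is closed; thus every sublevel set of $\phi\oplus\psi$ is closed. I expect the only real obstacle to be the role of the compactness hypothesis: exactly as in the example following Proposition \ref{prop:sum-by-levels}, without compactness the Minkowski sum of two closed convex sublevel sets may fail to be closed, so lower semicontinuity can genuinely fail. Infinite values of $\phi$ or $\psi$ cause no difficulty, since the convexity inequality is trivial when either $(\phi\oplus\psi)(x_{i})=+\infty$, and a point contributing to a finite sublevel set automatically has finite value.
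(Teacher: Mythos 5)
Your proof is correct and follows essentially the same route as the paper's: the convexity step is the same $\epsilon$-perturbation argument with $y=\lambda y_1+(1-\lambda)y_2$ (you merely package the final estimate via the elementary max inequality, where the paper bounds $\phi(y_\lambda)$ and $\psi(x_\lambda-y_\lambda)$ separately before taking the maximum), and the lower semicontinuity step proceeds through the sublevel-set identity $\KL_t(\phi\oplus\psi)=\KL_t(\phi)+\KL_t(\psi)$ plus the fact that the Minkowski sum of a compact and a closed convex set is closed. The only cosmetic difference is that you re-derive this identity from scratch, whereas the paper invokes (the quasi-convex analogue of) Proposition \ref{prop:sum-by-levels}; the content is identical.
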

\begin{proof}
We will verify directly that $\phi\oplus\psi$ is convex. Fix $x_{0},x_{1}\in\R^{n}$
and $0<\lambda<1$. For every $\epsilon>0$ we can find $y_{0},y_{1}\in\R^{n}$
such that 
\begin{eqnarray*}
\left(\phi\oplus\psi\right)(x_{0}) & = & \inf_{y\in\R^{n}}\max\left\{ \phi(y),\psi(x_{0}-y)\right\} \ge\max\left\{ \phi(y_{0}),\psi(x_{0}-y_{0})\right\} -\epsilon\\
\left(\phi\oplus\psi\right)(x_{1}) & = & \inf_{y\in\R^{n}}\max\left\{ \phi(y),\psi(x_{1}-y)\right\} \ge\max\left\{ \phi(y_{1}),\psi(x_{1}-y_{1})\right\} -\epsilon.
\end{eqnarray*}
 Define $x_{\lambda}=(1-\lambda)x_{0}+\lambda x_{1}$ and $y_{\lambda}=(1-\lambda)y_{0}+\lambda y_{1}$.
Notice that 
\begin{eqnarray*}
\phi(y_{\lambda}) & \le & (1-\lambda)\phi(y_{0})+\lambda\phi(y_{1})\\
 & \le & (1-\lambda)\max\left\{ \phi(y_{0}),\psi(x_{0}-y_{0})\right\} +\lambda\max\left\{ \phi(y_{1}),\psi(x_{1}-y_{1})\right\} \\
 & \le & (1-\lambda)\left[\left(\phi\oplus\psi\right)(x_{0})+\epsilon\right]+\lambda\left[\left(\phi\oplus\psi\right)(x_{1})+\epsilon\right]\\
 & = & (1-\lambda)\left[\left(\phi\oplus\psi\right)(x_{0})\right]+\lambda\left[\left(\phi\oplus\psi\right)(x_{1})\right]+\epsilon,
\end{eqnarray*}
 and similarly 
\begin{eqnarray*}
\psi(x_{\lambda}-y_{\lambda}) & \le & (1-\lambda)\psi(x_{0}-y_{0})+\lambda\psi(x_{1}-y_{1})\\
 & \le & (1-\lambda)\left[\left(\phi\oplus\psi\right)(x_{0})\right]+\lambda\left[\left(\phi\oplus\psi\right)(x_{1})\right]+\epsilon.
\end{eqnarray*}
 Therefore 
\begin{eqnarray*}
\left(\phi\oplus\psi\right)(x_{\lambda}) & = & \inf_{y\in\R^{n}}\max\left\{ \phi(y),\psi(x_{\lambda}-y)\right\} \le\max\left\{ \phi(y_{\lambda}),\psi(x_{\lambda}-y_{\lambda})\right\} \\
 & \le & (1-\lambda)\left[\left(\phi\oplus\psi\right)(x_{0})\right]+\lambda\left[\left(\phi\oplus\psi\right)(x_{1})\right]+\epsilon.
\end{eqnarray*}
 Taking $\epsilon\to0$, we obtain the result. 

If the level sets $\KL_{t}(\phi)$ are compact, one may apply Proposition
\ref{prop:sum-by-levels} and conclude that for every $t\in\R$ 
\[
\KL_{t}(\phi\oplus\psi)=\KL_{t}(\phi)+\KL_{t}(\psi).
\]
 Since $\KL_{t}(\phi)$ is compact and $\KL_{t}(\psi)$ is closed,
their Minkowski sum $\KL_{t}(\phi\oplus\psi)$ is closed as well.
This implies that $\phi\oplus\psi$ is lower semicontinuous, and the
proof is complete.
\end{proof}
Again, the compactness assumption is necessary. This is not a surprise,
because it is well known that even the corresponding theorem for convex
bodies fails without compactness: Define 
\begin{eqnarray*}
K_{1} & = & \left\{ (x,y)\in\R^{2}:\ x>0,\ y\ge\frac{1}{x}\right\} \\
K_{2} & = & \left\{ (x,y)\in\R^{2}:\ x>0,\ y\le-\frac{1}{x}\right\} .
\end{eqnarray*}
 Then $K_{1}$ and $K_{2}$ are closed, convex sets, but their sum
$K_{1}+K_{2}=\left\{ (x,y)\in\R^{2}:\ x>0\right\} $ is not closed.
If we now define 
\[
\phi_{i}(x,y)=\o_{K_{i}}^{\infty}(x,y)=\begin{cases}
0 & (x,y)\in K_{i}\\
\infty & \text{otherwise, }
\end{cases}
\]
 then each $\phi_{i}$ is lower semicontinuous, but $\phi_{1}\oplus\phi_{2}=\o_{K_{1}+K_{2}}^{\infty}$
is not.

We are now ready to prove Minkowski's theorem for our addition:
\begin{thm}
\label{thm:polynomial}Fix $f_{1},f_{2},\ldots,f_{m}\in\qc$. Then
the function $F:\left(\R^{+}\right)^{m}\to[0,\infty]$, defined by
\[
F(\epsilon_{1},\epsilon_{2},\ldots,\epsilon_{m})=\int\left[\left(\epsilon_{1}\odot f_{1}\right)\oplus\left(\epsilon_{2}\odot f_{2}\right)\oplus\cdots\oplus\left(\epsilon_{m}\odot f_{m}\right)\right]
\]
 is a homogenous polynomial of degree $n$, with non-negative coefficients.
If we write 
\[
F(\epsilon_{1},\epsilon_{2},\ldots,\epsilon_{m})=\sum_{i_{1},i_{2},\ldots,i_{n}=1}^{m}\epsilon_{i_{1}}\epsilon_{i_{2}}\cdots\epsilon_{i_{n}}\cdot V(f_{i_{1}},f_{i_{2}}\ldots,f_{i_{n}})
\]
 for a symmetric function $V$, then 
\[
V(f_{1},f_{2},\ldots,f_{n})=\int_{0}^{\infty}V\left(\KU_{t}\left(f_{1}\right),\KU_{t}\left(f_{2}\right),\ldots,\KU_{t}\left(f_{n}\right)\right)dt.
\]
\end{thm}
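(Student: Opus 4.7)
The plan is to reduce Theorem \ref{thm:polynomial} to Minkowski's classical theorem on mixed volumes via the layer-cake (Cavalieri) representation of the Lebesgue integral. For any non-negative measurable function $h$ on $\R^{n}$,
\[
\int_{\R^{n}} h(x)\, dx \;=\; \int_{0}^{\infty} \vol\left\{ x : h(x) \ge t \right\} dt,
\]
and I would apply this to $h = \left(\epsilon_{1}\odot f_{1}\right) \oplus \cdots \oplus \left(\epsilon_{m}\odot f_{m}\right)$.

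The first step is to identify the upper level sets of $h$. Under the assumption that $\int f_{i} < \infty$ for every $i$, the set $\KU_{t}(f_{i})$ is closed, convex, and of finite volume, hence compact, for every $t > 0$. Combining the two assertions of Proposition \ref{prop:sum-by-levels} (applied iteratively, using the associativity of $\oplus$) then yields
\[
\KU_{t}(h) \;=\; \epsilon_{1}\KU_{t}(f_{1}) + \cdots + \epsilon_{m}\KU_{t}(f_{m}) \qquad (t > 0).
\]
Minkowski's classical theorem, stated at the beginning of the introduction, now expands the volume of this Minkowski combination as a homogeneous polynomial of degree $n$ with non-negative coefficients,
\[
\vol\!\Bigl(\textstyle\sum_{i=1}^{m}\epsilon_{i}\KU_{t}(f_{i})\Bigr) = \sum_{i_{1}, \ldots, i_{n} = 1}^{m} \epsilon_{i_{1}}\cdots\epsilon_{i_{n}}\, V\!\left(\KU_{t}(f_{i_{1}}), \ldots, \KU_{t}(f_{i_{n}})\right).
\]
Substituting into the layer-cake identity and interchanging the finite sum with the integral in $t$ (legitimate by linearity) gives
\[
F(\epsilon_{1}, \ldots, \epsilon_{m}) \;=\; \sum_{i_{1}, \ldots, i_{n} = 1}^{m} \epsilon_{i_{1}}\cdots\epsilon_{i_{n}} \int_{0}^{\infty} V\!\left(\KU_{t}(f_{i_{1}}), \ldots, \KU_{t}(f_{i_{n}})\right) dt,
\]
which is simultaneously a homogeneous polynomial of degree $n$ in the $\epsilon_{i}$ with non-negative coefficients and the claimed layer-cake formula for the mixed integral.

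The main obstacle is the compactness hypothesis required by Proposition \ref{prop:sum-by-levels}: if some $f_{i}$ has infinite integral then its level sets need not be compact, and the equality $\KU_{t}(h) = \sum_{i}\epsilon_{i}\KU_{t}(f_{i})$ can fail. To handle this case I would fall back on the two unconditional inclusions, namely $\epsilon_{1}\KU_{t}(f_{1}) + \cdots + \epsilon_{m}\KU_{t}(f_{m}) \subseteq \KU_{t}(h)$, which follows from the definition of $\oplus$ alone, and $\{h > t\} \subseteq \epsilon_{1}\KU_{t}(f_{1}) + \cdots + \epsilon_{m}\KU_{t}(f_{m})$ noted in the remark following Proposition \ref{prop:sum-by-levels}. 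These two inclusions sandwich the true level set and differ only on $\{h = t\}$; since $\vol\{h = t\}$ is non-zero for at most countably many $t$, the outer layer-cake integration is unaffected, and one obtains the same polynomial identity — now possibly with some $+\infty$ coefficients, as permitted by the statement. A minor secondary issue is the measurability in $t$ of the classical mixed volumes appearing under the integral, which follows from the monotone decrease of the level sets in $t$ together with the continuity of mixed volume on $\kn$.
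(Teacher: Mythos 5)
Your proposal follows essentially the same route as the paper: layer-cake/Fubini to reduce to the level sets, Proposition \ref{prop:sum-by-levels} to identify $\KU_t(h)$ with the Minkowski combination $\sum_i \epsilon_i \KU_t(f_i)$, the classical Minkowski theorem applied under the integral, and the same sandwich of inclusions $\{h>t\}\subseteq \sum_i \epsilon_i \KU_t(f_i)\subseteq \KU_t(h)$ together with the at-most-countable-jump observation to dispose of the non-compact case. The one small inaccuracy, that finite integral alone forces $\KU_t(f_i)$ to be compact (a lower-dimensional closed convex level set could have zero volume yet be unbounded), is harmless because your fallback argument already covers that situation, so the proof is correct.
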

\begin{proof}
Define $h=\left(\epsilon_{1}\odot f_{1}\right)\oplus\left(\epsilon_{2}\odot f_{2}\right)\oplus\cdots\oplus\left(\epsilon_{m}\odot f_{m}\right)$.
Using Fubini's theorem we can integrate by level sets and obtain 
\[
\int h=\int_{0}^{\infty}\left|\KU_{t}(h)\right|dt,
\]
 where $\left|\cdot\right|$ denotes the Lebesgue volume.

Applying Proposition \ref{prop:sum-by-levels}, we see that if the
upper level sets $\KU_{t}(f_{i})$ are all compact then 
\[
\KU_{t}(h)=\epsilon_{1}\KU_{t}(f_{1})+\epsilon_{2}\KU_{t}(f_{2})+\cdots+\epsilon_{m}\KU_{t}(f_{m}),
\]
 so we can integrate and obtain 
\[
\int_{0}^{\infty}\left|\KU_{t}(h)\right|dt=\int_{0}^{\infty}\left|\epsilon_{1}\KL_{t}(f_{1})+\epsilon_{2}\KL_{t}(f_{2})+\cdots+\epsilon_{m}\KL_{t}(f_{m})\right|dt.
\]

In fact, by being a bit careful, it is possible to obtain the above
formula even without assuming compactness. Indeed, by the discussion
after Proposition \ref{prop:sum-by-levels}, we see that we always
have 
\[
\KU_{t}(h)\supseteq\epsilon_{1}\KU_{t}(f_{1})+\epsilon_{2}\KU_{t}(f_{2})+\cdots+\epsilon_{m}\KU_{t}(f_{m})\supseteq\left\{ x:\ h(x)>t\right\} .
\]
 Since $\left|\KU_{t}(h)\right|=\left|\left\{ x:\ h(x)>t\right\} \right|$
for all but countably many values of $t$, we get that 
\[
\left|\KU_{t}(h)\right|=\left|\epsilon_{1}\KU_{t}(f_{1})+\epsilon_{2}\KU_{t}(f_{2})+\cdots+\epsilon_{m}\KU_{t}(f_{m})\right|
\]
 for all but countably many values of $t$, so we can still integrate
and get the formula we want.

Now we apply the classic Minkowski theorem and obtain 
\begin{eqnarray*}
\int h & = & \int_{0}^{\infty}\left|\epsilon_{1}\KL_{t}(f_{1})+\epsilon_{2}\KL_{t}(f_{2})+\cdots+\epsilon_{m}\KL_{t}(f_{m})\right|dt\\
 & = & \int_{0}^{\infty}\sum_{i_{1},i_{2},\ldots,i_{n}=1}^{m}\epsilon_{i_{1}}\epsilon_{i_{2}}\cdots\epsilon_{i_{n}}\cdot V\left(\KU_{t}\left(f_{i_{1}}\right),\KU_{t}\left(f_{i_{2}}\right),\ldots,\KU_{t}\left(f_{i_{m}}\right)\right)dt\\
 & = & \sum_{i_{1},i_{2},\ldots,i_{n}=1}^{m}\epsilon_{i_{1}}\epsilon_{i_{2}}\cdots\epsilon_{i_{n}}\cdot\int_{0}^{\infty}V\left(\KU_{t}\left(f_{i_{1}}\right),\KU_{t}\left(f_{i_{2}}\right),\ldots,\KU_{t}\left(f_{i_{m}}\right)\right)dt
\end{eqnarray*}
 which is exactly what we wanted.
\end{proof}
From Theorem \ref{thm:polynomial} the following definition becomes
natural:
\begin{defn}
\label{def:mixed-integrals}If $f_{1},\ldots,f_{n}\in\qc$ we define
their mixed integral as 
\[
V(f_{1},f_{2},\ldots,f_{n})=\int_{0}^{\infty}V\left(\KU_{t}\left(f_{1}\right),\KU_{t}\left(f_{2}\right),\ldots,\KU_{t}\left(f_{n}\right)\right)dt.
\]

\end{defn}
The mixed integrals $V(f_{1},f_{2},\ldots,f_{n})$ are exactly the
polarization we were looking for -- it is a symmetric, multilinear
functional such that $V(f,f,\ldots,f)=\int f$ for all $f\in\lc$. 

We will now give a couple of examples of mixed integrals:
\begin{example}
One can view mixed volumes as a special case of mixed integrals. Fix
bodies $K_{1},\ldots,K_{n}$ and a parameter $p>0$, and define $f_{i}(x)=\exp\left(-\left\Vert x\right\Vert _{K_{i}}^{p}\right)\in\qc$.
Then for every $0<t<1$ we have 
\[
\KU_{t}(f_{i})=\left\{ x:\ \exp\left(-\left\Vert x\right\Vert _{K_{i}}^{p}\right)\ge t\right\} =\left\{ x:\ \left\Vert x\right\Vert _{K_{i}}\le\left(\log\frac{1}{t}\right)^{\frac{1}{p}}\right\} =\left(\log\frac{1}{t}\right)^{\frac{1}{p}}\cdot K_{i}.
\]
 Hence, 
\begin{eqnarray*}
V(f_{1},\ldots,f_{n}) & = & \int_{0}^{1}V\left(\left(\log\frac{1}{t}\right)^{\frac{1}{p}}K_{1},\ldots,\left(\log\frac{1}{t}\right)^{\frac{1}{p}}K_{n}\right)dt\\
 & = & \left[\int_{0}^{1}\left(\log\frac{1}{t}\right)^{\frac{n}{p}}dt\right]\cdot V(K_{1},\ldots,K_{n}),
\end{eqnarray*}
 so the mixed integral of $f_{1},\ldots,f_{n}$ is the same as the
mixed volume of $K_{1},\ldots,K_{n}$, up to normalization. In particular,
taking $p\to\infty$ we get 
\[
V\left(\o_{K_{1}},\o_{K_{2}},\ldots,\o_{K_{n}}\right)=V\left(K_{1},\ldots,K_{n}\right),
\]
 which can also be seen directly from the definition.

Of course, one can make this example even more general, by choosing
$f_{i}$ to be any integrable function such that $\KU_{t}(f_{i})$
is always homothetic to $K_{i}$.
\end{example}

\begin{example}
\label{example:quermass}Fix any $f\in\qc$, and assume for simplicity
that $f(x)\le1$ for all $x$. Also, fix a convex set $K\in\kn$.
It is not hard to see that for every $\epsilon>0$ we have 
\[
f\oplus\left(\epsilon\odot\o_{K}\right)=f\star\left(\epsilon\cdot\o_{K}\right)=\sup_{y\in\epsilon K}f(x-y).
\]
 Here $\star$ is the sup-convolution operation described in the introduction,
and $\cdot$ is the induced homothety operation defined by $\left(\lambda\cdot g\right)(x)=g\left(\frac{x}{\lambda}\right)^{\lambda}$.
It follows from Theorem \ref{thm:polynomial} that the integral $\int f\star\left(\epsilon\cdot\o_{K}\right)$
is a polynomial in $\epsilon$. In other words, in this restricted
case, we obtain a Minkowski type theorem for the sup-convolution operation. 

In particular, define the $\epsilon$-extension of $f$ to be 
\[
f_{\epsilon}(x)=\sup_{\left|y\right|\le\epsilon}f(x+y).
\]
Since $f_{\epsilon}=f\oplus\left(\epsilon\odot\o_{D}\right)$, where
$D$ is the Euclidean ball, the integral $\int f_{\epsilon}$ is a
polynomial in $\epsilon$. Its coefficients, 
\[
W_{k}(f)=V(\underbrace{f,f,\ldots f}_{n-k\text{ times}},\underbrace{\o_{D},\o_{D},\ldots,\o_{D}}_{k\text{ times}})
\]
will be called the quermassintegrals of $f$. This is in analogy to
the classic quermassintegrals, which are defined for every $K\in\kn$
by 
\[
W_{k}(K)=V(\underbrace{K,K,\ldots K}_{n-k\text{ times}},\underbrace{D,D,\ldots,D}_{k\text{ times}}).
\]
This example was discovered independently by Colesanti \cite{colesanti_what_2012},
who also proved that these numbers share several important properties
of the classic quermassintegrals. 

Of course, there is no reason to use only one convex body. One easily
checks that for every $f\in\qc$ and every convex sets $K_{1},K_{2},\ldots,K_{m}\in\kn$,
the integral 
\[
\int\left[f\star\left(\epsilon_{1}\odot\o_{K_{1}}\right)\star\left(\epsilon_{2}\odot\o_{K_{2}}\right)\star\cdots\star\left(\epsilon_{m}\odot\o_{K_{m}}\right)\right]
\]
 is a polynomial in $\epsilon_{1},\epsilon_{2},\ldots,\epsilon_{m}$. 
\end{example}
We see that the operation $\oplus$ satisfies a Minkowski theorem,
while the standard inf-convolution $\square$ does not. It is interesting
to notice that, nonetheless, the operations $\oplus$ and $\square$
are not so different. In fact, for every positive convex functions
$\phi$ and $\psi$ we have 
\[
\frac{1}{2}\left(\phi\square\psi\right)(x)\le\left(\phi\oplus\psi\right)(x)\le\left(\phi\square\psi\right)(x).
\]
 This follows from the following proposition:
\selectlanguage{american}%
\begin{prop}
\label{prop:old-new-relation}Fix convex functions $\phi_{1},\phi_{2},\ldots\phi_{k}\in\cvx$
such that $\phi_{i}(x)\ge0$ for all $1\le i\le k$ and $x\in\R^{n}$.
For $\lambda_{1},\lambda_{2},\ldots,\lambda_{k}>0$, define
\begin{eqnarray*}
g_{1}(x) & = & \left(\lambda_{1}\cdot\phi_{1}\square\lambda_{2}\cdot\phi_{2}\square\cdots\square\lambda_{k}\cdot\phi_{k}\right)(x)\\
g_{2}(x) & = & \left(\lambda_{1}\odot\phi_{1}\oplus\lambda_{2}\odot\phi_{2}\oplus\cdots\oplus\lambda_{k}\odot\phi_{k}\right)(x)
\end{eqnarray*}
Then 
\[
\left(\sum_{i}\lambda_{i}\right)^{-1}g_{1}(x)\le g_{2}(x)\le\left(\min\lambda_{i}\right)^{-1}g_{1}(x).
\]
\end{prop}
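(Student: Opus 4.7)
The plan is to rewrite both $g_1$ and $g_2$ as a single infimum over all decompositions $y_1+y_2+\cdots+y_k = x$, and then reduce the inequality to an elementary pointwise comparison between a maximum and a weighted sum of $k$ non-negative numbers.

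First I would unfold the iterated operations. For the inf-convolution side, a short computation based on $2 \cdot \phi = \phi \square \phi$ (and convexity of $\phi$) confirms that the induced homothety is $(\lambda \cdot \phi)(x) = \lambda\, \phi(x/\lambda)$, and then the standard associativity of $\square$ yields
\[
g_1(x) = \inf\left\{ \sum_{i=1}^{k} \lambda_i \phi_i(y_i/\lambda_i) \,:\, y_1+\cdots+y_k=x\right\}.
\]
For the new operation, a straightforward induction on $k$, using only the monotonicity identity $\max\bigl(\inf_y F(y),\,c\bigr) = \inf_y \max(F(y),c)$, gives
\[
g_2(x) = \inf\left\{ \max_{1\le i \le k} \phi_i(y_i/\lambda_i) \,:\, y_1+\cdots+y_k=x\right\}.
\]

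With both quantities expressed as infima over the same set of decompositions, the inequality reduces to comparing the integrands. Since each $\phi_i$ is non-negative, for any fixed decomposition we have
\[
\bigl(\min_i \lambda_i\bigr)\max_i \phi_i(y_i/\lambda_i) \le \max_i \lambda_i\phi_i(y_i/\lambda_i) \le \sum_{i=1}^{k} \lambda_i\phi_i(y_i/\lambda_i) \le \Bigl(\sum_i \lambda_i\Bigr) \max_i \phi_i(y_i/\lambda_i).
\]
Taking the infimum over all decompositions preserves the outer two inequalities, producing
\[
\bigl(\min_i \lambda_i\bigr)\, g_2(x) \le g_1(x) \le \Bigl(\sum_i \lambda_i\Bigr) g_2(x),
\]
which rearranges to the stated bound.

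The only step that requires a moment of thought is the unfolding of $g_2$, since iterated applications of $\oplus$ nest infima inside maxima; the monotonicity identity above is precisely what lets the inner infimum escape through each outer $\max$, and the induction then goes through cleanly. Non-negativity of the $\phi_i$ is used only at the elementary step $\max_i a_i \le \sum_i a_i$, exactly where one would expect it; without this hypothesis the upper bound on $g_1$ could easily fail.
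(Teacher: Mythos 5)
Your proof is correct and follows essentially the same route as the paper: both express $g_1$ and $g_2$ as infima over the same set of decompositions and then compare the two integrands pointwise via the elementary inequality $\bigl(\min_i\lambda_i\bigr)\max_i a_i \le \sum_i \lambda_i a_i \le \bigl(\sum_i\lambda_i\bigr)\max_i a_i$ for non-negative $a_i$. The only cosmetic difference is the parametrization of decompositions (you use $\sum_i y_i = x$ with arguments $y_i/\lambda_i$; the paper substitutes to $\sum_i \lambda_i y_i = x$ with arguments $y_i$), and you are somewhat more explicit about the interchange of $\max$ and $\inf$ used in unfolding the iterated $\oplus$.
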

\selectlanguage{english}%
\begin{proof}
By definition we have 
\begin{eqnarray*}
g_{1}(x) & = & \inf_{\sum\lambda_{i}y_{i}=x}\sum_{i=1}^{k}\lambda_{i}f_{i}(y_{i}),\\
g_{2}(x) & = & \inf_{\sum\lambda_{i}y_{i}=x}\max_{1\le i\le k}\left\{ f_{i}(y_{i})\right\} .
\end{eqnarray*}
\foreignlanguage{american}{For every non-negative numbers $a_{1},\ldots,a_{k}$
and positive coefficients $\lambda_{1},\ldots,\lambda_{k}$ we have
\[
\frac{\sum_{i}\lambda_{i}a_{i}}{\sum_{i}\lambda_{i}}\le\max_{i}a_{i}\le\frac{\sum_{i}\lambda_{i}a_{i}}{\min\lambda_{i}},
\]
 and the result follows immediately.}
\end{proof}
To conclude this section, let us digress slightly and discuss a relation
between the above constructions and the notion of polarity. From Proposition
\ref{prop:sum-by-levels} we see that our sum $\oplus$ operates on
level sets: In order to find $\underline{K}_{t}(\phi\oplus\psi)$
it is enough to know $\underline{K}_{t}(\phi)$ and $\underline{K}_{t}(\psi)$.
Another important structure in convexity is that of polarity. Let
$\phi:\R^{n}\to[0,\infty]$ be a lower semicontinuous convex function
with $\phi(0)=0$ -- such functions are called geometric convex functions.
The polar function of $\phi$, which we will denote by $\phi^{\circ}$,
is defined using the so-called $\mathcal{A}$-transform of $\phi$,
\[
\phi^{\circ}(x):=\left(\mathcal{A}\phi\right)(x)=\sup_{y\in\R^{n}}\frac{\left\langle x,y\right\rangle -1}{\phi(y)}
\]
 (see \cite{artstein-avidan_hidden_2011} for a detailed discussion.
See also \cite{rockafellar_convex_1970} section 15, as well as \cite{milman_geometrization_2008}
for historical remarks). Polarity does not work on level sets. However,
it is interesting to notice that it is ``almost'' the case:
\begin{prop}
\label{prop:duality}For every geometric convex function $\phi$ and
every $t>0$ we have 
\[
\KL_{1/t}(\phi)^{\circ}\subseteq\KL_{t}(\phi^{\circ})\subseteq2\KL_{1/t}(\phi)^{\circ}.
\]

\end{prop}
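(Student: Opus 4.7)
The plan is to unpack both inclusions directly from the definition
\[
\phi^{\circ}(x)=\sup_{y\in\R^{n}}\frac{\left\langle x,y\right\rangle -1}{\phi(y)},
\]
using only the convexity of $\phi$ together with the normalization $\phi(0)=0$. This normalization yields the sub-homogeneity $\phi(\lambda y)\le\lambda\phi(y)$ for all $y\in\R^{n}$ and $0\le\lambda\le 1$, since $\phi(\lambda y)=\phi(\lambda y+(1-\lambda)\cdot 0)\le\lambda\phi(y)$. Moreover, the condition $\phi^{\circ}(x)\le t$ is equivalent to the pointwise inequality $\left\langle x,y\right\rangle \le 1+t\phi(y)$ for every $y\in\R^{n}$.

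For the second inclusion $\KL_{t}(\phi^{\circ})\subseteq 2\KL_{1/t}(\phi)^{\circ}$, fix $x$ with $\phi^{\circ}(x)\le t$ and any $y\in\KL_{1/t}(\phi)$. The reformulation above gives $\left\langle x,y\right\rangle \le 1+t\phi(y)\le 1+t\cdot(1/t)=2$, so $\left\langle x/2,y\right\rangle \le 1$ for every such $y$, which is exactly the statement $x/2\in\KL_{1/t}(\phi)^{\circ}$. The factor $2$ here is the sum $1+1$ of the additive constant built into $\phi^{\circ}$ and the polarity threshold, and should not be expected to disappear.

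For the first inclusion $\KL_{1/t}(\phi)^{\circ}\subseteq\KL_{t}(\phi^{\circ})$, fix $x\in\KL_{1/t}(\phi)^{\circ}$ and estimate $\frac{\left\langle x,y\right\rangle -1}{\phi(y)}$ by splitting into cases according to the size of $\phi(y)$. If $\phi(y)\le 1/t$, then $y\in\KL_{1/t}(\phi)$, hence $\left\langle x,y\right\rangle \le 1$ and the quotient is non-positive. If $\phi(y)>1/t$, apply the scaling trick $y':=\frac{1}{t\phi(y)}y$, whose scaling factor lies in $(0,1)$; sub-homogeneity gives $\phi(y')\le\frac{1}{t\phi(y)}\phi(y)=1/t$, so $y'\in\KL_{1/t}(\phi)$ and therefore $\left\langle x,y'\right\rangle \le 1$. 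Multiplying by $t\phi(y)>0$ yields $\left\langle x,y\right\rangle \le t\phi(y)$, and hence $\frac{\left\langle x,y\right\rangle -1}{\phi(y)}\le t-\frac{1}{\phi(y)}\le t$.

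The only mild technicality is the edge case $\phi(y)=0$: such a $y$ belongs to every level set $\KL_{1/t}(\phi)$, so the assumption $x\in\KL_{1/t}(\phi)^{\circ}$ already forces $\left\langle x,y\right\rangle \le 1$, which is compatible with the $\sup$ defining $\phi^{\circ}(x)$ being finite. Beyond tracking this degenerate case, I expect no genuine obstacle here; the whole argument is really a one-line computation in each direction, and its only substantive content is the scaling $y\mapsto y/(t\phi(y))$, which converts a general $y$ with $\phi(y)>1/t$ into an element of $\KL_{1/t}(\phi)$ on which the polarity condition can be applied directly.
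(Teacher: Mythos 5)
Your proof is correct and follows essentially the same route as the paper: for the second inclusion you use $\phi(y)\le 1/t$ to bound $\langle x,y\rangle\le 2$, and for the first inclusion you use the normalization $\phi(0)=0$ (via convexity, i.e.\ your sub-homogeneity) to rescale $y$ into $\KL_{1/t}(\phi)$ via $y\mapsto y/(t\phi(y))$, which is precisely the paper's $y\mapsto y/(st)$ with $s=\phi(y)$. The only cosmetic differences are your upfront additive reformulation of $\phi^{\circ}(x)\le t$ and your explicit remark on the $\phi(y)=0$ case, both of which tighten the exposition without changing the argument.
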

In an informal way, one can say that the polars of the level sets
are ``almost'' the level sets of the polar. 
\begin{proof}
The easier inclusion is the right one. Assume $x\in\KL_{t}(\phi^{\circ})$
and take any $y\in\KL_{1/t}(\phi)$. We have 
\[
t\ge\phi^{\circ}(x)\ge\frac{\left\langle x,y\right\rangle -1}{\phi(y)}\ge\frac{\left\langle x,y\right\rangle -1}{1/t}=t\left(\left\langle x,y\right\rangle -1\right),
\]
and when we divide by $t$ we get $\left\langle x,y\right\rangle -1\le1$,
which implies $\left\langle \frac{x}{2},y\right\rangle \le1$. Since
$y\in\KL_{1/t}(\phi)$ was arbitrary it follows that $\frac{x}{2}\in\KL_{1/t}(\phi)^{\circ}$,
so $x\in2\KL_{1/t}(\phi)^{\circ}.$

For the left inclusion, fix $x\in\KL_{1/t}(\phi)^{\circ}$ and take
any $y\in\R^{n}$. If $\phi(y)\le\frac{1}{t}$ then $y\in\KL_{1/t}(\phi)$,
so $\left\langle x,y\right\rangle \le1$ and 
\[
\frac{\left\langle x,y\right\rangle -1}{\phi(y)}\le0\le t.
\]
 If, on the other hand, $\phi(y)=s>\frac{1}{t}$, then $st>1$, and
using the convexity of $\phi$ we get 
\begin{eqnarray*}
\phi\left(\frac{y}{st}\right) & = & \phi\left(\frac{1}{st}\cdot y+\left(1-\frac{1}{st}\right)\cdot0\right)\le\frac{1}{st}\phi(y)+\left(1-\frac{1}{st}\right)\phi(0)=\frac{1}{t}.
\end{eqnarray*}
 Hence $\frac{y}{st}\in L_{1/t}\left(\phi\right)$, so $\left\langle x,\frac{y}{st}\right\rangle \le1$,
and then 
\[
\frac{\left\langle x,y\right\rangle -1}{\phi(y)}\le\frac{st-1}{s}=t-\frac{1}{s}\le t.
\]
 All together we get 
\[
\phi^{\circ}(x)=\sup_{y\in\R^{n}}\frac{\left\langle x,y\right\rangle -1}{\phi(y)}\le t,
\]
 so $x\in\KL_{t}(\phi^{\circ})$, like we wanted.
\end{proof}
Assume $\phi$ is any function which is geometric, quasi-convex and
lower semicontinuous. We define its dual function $\phi^{\ast}$ via
the relation 
\[
\KL_{t}(\phi^{\ast})=\KL_{1/t}(\phi)^{\circ}
\]
 for any $t>0$. It is easy to see that $\phi^{\ast}$ is also geometric,
quasi-convex and lower semicontinuous. Furthermore, we have $\left(\phi^{\ast}\right)^{\ast}=\phi$,
so the operation $\ast$ really defines a duality relation on quasi-convex
functions. Proposition \ref{prop:duality} tells us that for convex
functions, the operations $\circ$ and $\ast$ are very similar.

\section{\label{sec:Inequalities}Rearrangement inequalities}

In this section we will generalize several classic inequalities concerning
mixed volumes to the realm of quasi-concave functions. For simplicity,
we will always assume our quasi-concave functions are geometric:
\begin{defn}
A function $f\in\qc$ is called geometric if 
\[
\max_{x\in\R^{n}}f(x)=f(0)=1.
\]
 The class of all geometric quasi-concave functions will be denoted
by $\qco$. We define the class $\lco$ of geometric log-concave functions
in a similar way.

In section \ref{sec:LC-inequalities}, the fact that the functions
involved are geometric will play a crucial role. Here, however, this
is merely a matter of convenience, allowing us to ignore some technical
details -- many of the results will remain true even without this
assumption.
\end{defn}
Remember from the introduction that our first goal is to state and
prove an extension of the isoperimetric inequality to our case: we
want to give a lower bound on 
\[
S(f)=n\cdot W_{1}(f)
\]
in terms of the integral $\int f$ (the notation $W_{k}$ appeared
in Example \ref{example:quermass}). Unfortunately, this is impossible:
In Remark \ref{rem:counter-isoperimetric} we will construct a sequence
$f_{k}\in\text{QC}_{0}\left(\R^{2}\right)$ with $\int f_{k}=1$,
but $S(f_{k})\stackrel{k\to\infty}{\longrightarrow}0$. Hence we follow
another route and define:
\begin{defn}
\label{def:SDR}
\begin{enumerate}
\item For a compact $K\in\kn$, define 
\[
K^{\ast}=\left(\frac{\vol(K)}{\vol(D)}\right)^{\frac{1}{n}}D.
\]
In other words, $K^{\ast}$ is the Euclidean ball with the same volume
as $K$. 
\item For $f\in\qco$ with compact upper level sets, define its symmetric
decreasing rearrangement $f^{\ast}$ using the relation 
\[
\KU_{t}\left(f^{\ast}\right)=\KU_{t}(f)^{\ast}.
\]
 
\end{enumerate}
\end{defn}
It is easy to see that this definition really defines a unique function
$f^{\ast}\in\qco$, which is rotation invariant.

Since $K^{\ast}$ is a ball with the same volume as $K$, the isoperimetric
inequality tells us that $S(K)\ge S(K^{\ast})$ for every convex body
$K\in\kn$. This means we can think about the isoperimetric inequality
as a rearrangement inequality, and this point of view can be extended
to quasi-concave functions:
\begin{prop}
\label{prop:qc-isoperimetric}If $f\in\qco$ has compact level sets,
then $S(f)\ge S(f^{\ast})$, with equality if and only if $f$ is
rotation invariant.\end{prop}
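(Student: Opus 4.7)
The plan is to reduce the claim to the classical isoperimetric inequality applied one level at a time. The machinery from Definition \ref{def:mixed-integrals} and Theorem \ref{thm:polynomial} supplies a layer-cake representation of any mixed integral as the integral of classical mixed volumes of level sets, which is exactly what is needed here, because the surface area is itself a mixed integral with one slot occupied by $\o_D$.

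I would start by writing $S(f) = n\,W_1(f) = n\,V(f,\ldots,f,\o_D)$ and then apply Definition \ref{def:mixed-integrals}. The upper level sets of $\o_D$ are $\KU_t(\o_D) = D$ for $0 < t \le 1$ and $\emptyset$ for $t > 1$, and because $f$ is geometric, $\KU_t(f)$ is also empty for $t > 1$. Hence the integration range collapses to $(0,1]$ and one obtains
\[
S(f) = \int_0^1 n\,V\!\left(\KU_t(f),\ldots,\KU_t(f),D\right)dt = \int_0^1 S\!\left(\KU_t(f)\right)dt,
\]
using the classical identity $S(K) = n\,V(K,\ldots,K,D)$ in the last step. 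An identical computation applied to $f^*$, combined with the defining relation $\KU_t(f^*) = \KU_t(f)^*$, gives
\[
S(f^*) = \int_0^1 S\!\left(\KU_t(f)^*\right)dt.
\]
At this point, the classical isoperimetric inequality $S(\KU_t(f)) \ge S(\KU_t(f)^*)$, applied pointwise in $t$ and integrated over $(0,1]$, immediately yields $S(f) \ge S(f^*)$.

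For the equality case, $S(f) = S(f^*)$ forces equality in the classical isoperimetric inequality at almost every level, so $\KU_t(f)$ must be a Euclidean ball for a.e. $t \in (0,1]$; by upper semicontinuity of $f$ (so $\KU_t(f) = \bigcap_{s<t}\KU_s(f)$) this propagates to every $t$. The main (and most delicate) obstacle I foresee is to then conclude that these balls are concentric and centered at the origin, so that $f$ is genuinely rotation invariant and not merely ``ball-leveled.'' The ingredients should be the nested structure $\KU_s(f) \supseteq \KU_t(f)$ for $s < t$, the fact that $f(0) = \max f = 1$ places $0$ in every nonempty level set, and the behavior of the level sets as $t \uparrow 1$; combining these with a Lipschitz-type estimate on the centers should pin them to the origin. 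The converse direction is immediate since rotation invariance forces $f = f^*$ and hence $S(f) = S(f^*)$.
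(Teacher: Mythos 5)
Your argument for the inequality $S(f)\ge S(f^{\ast})$ is exactly the paper's: expand $S(f)=\int_0^1 S(\KU_t(f))\,dt$ via the layer-cake formula coming from Theorem \ref{thm:polynomial}, use the defining relation $\KU_t(f^{\ast})=\KU_t(f)^{\ast}$, and integrate the classical isoperimetric inequality levelwise. That part is correct and is precisely what the paper does.

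Your worry about the equality case is justified, and it is in fact worse than you suspect: the implication ``every $\KU_t(f)$ is a Euclidean ball $\Longrightarrow f$ is rotation invariant'' is simply false, so the Lipschitz argument on the centers that you sketch cannot close the gap. Take $n=2$ and let $D$ be the closed unit disk. Prescribe
\[
\KU_t(f) \;=\; (1-t,0) + (2-t)D, \qquad t\in(0,1].
\]
For $0<s<t\le 1$ the distance between the centers is $t-s$, and $(t-s)+(2-t)=2-s$, so $\KU_t(f)\subseteq\KU_s(f)$, with internal tangency at $(-1,0)$. The family is nested, left-continuous, each member is compact and contains the origin (since $1-t\le 2-t$), so it is the family of upper level sets of a genuine $f\in\qco$ with compact level sets. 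Every level set is a ball, hence equality holds levelwise in the classical isoperimetric inequality and $S(f)=S(f^{\ast})$; nonetheless $f$ is not rotation invariant, e.g.\ $f(2,0)=\frac{1}{2}$ while $f(-2,0)=0$. The paper's own proof asserts ``$\KU_{t}(f)$ is always a ball, so $f$ is rotation invariant,'' which is exactly the step that fails; the correct equality characterization is that every level set of $f$ is a Euclidean ball, which is strictly weaker than rotation invariance. So the gap you flagged is real, but it is not a gap in your proof alone --- it is a gap in the stated proposition.
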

\begin{proof}
Notice that for every function $g\in\qco$ we have 
\begin{eqnarray*}
S(g) & = & n\cdot W_{1}(g)=n\cdot V(\underbrace{g,g,\ldots,g}_{n-1\text{ times}},\o_{D})\\
 & = & \int_{0}^{1}n\cdot V\left(\KU_{t}(g),\KU_{t}(g),\ldots,\KU_{t}(g),D\right)dt=\int_{0}^{1}S(\KU_{t}(g))dt.
\end{eqnarray*}
Using the classic isoperimetric inequality we get
\[
S(f^{\ast})=\int_{0}^{1}S(\KU_{t}(f^{\ast}))dt=\int_{0}^{1}S(\KU_{t}(f)^{\ast})dt\le\int_{0}^{1}S(\KU_{t}(f))dt=S(f),
\]
 which is what we wanted. 

If $S(f^{\ast})=S(f)$ then $S(\KU_{t}(f)^{\ast})=S(\KU_{t}(f))$
for all $t$. Again by the classic isoperimetric inequality this implies
that $\KU_{t}(f)$ is always a ball, so $f$ is rotation invariant. 
\end{proof}
In classic convexity, the isoperimetric inequality follows easily
from the Brunn-Minkowski theorem, which states that for any (say convex)
bodies $A,B\in\kn$, we have
\[
\vol(A+B)^{\frac{1}{n}}\ge\vol(A)^{\frac{1}{n}}+\vol(B)^{\frac{1}{n}}.
\]
 The Brunn-Minkowski theorem can also be written as a rearrangement
inequality: $(A+B)^{\ast}\supseteq A^{\ast}+B^{\ast}$. The corresponding
result for quasi-concave functions is
\begin{prop}
\label{prop:BM}If $f,g\in\qco$ have compact level sets, then $\left(f\oplus g\right)^{\ast}\ge f^{\ast}\oplus g^{\ast}.$ 
\end{prop}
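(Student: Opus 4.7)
The plan is to reduce the statement to the classical Brunn--Minkowski inequality by working at the level of upper level sets, using Proposition~\ref{prop:sum-by-levels} to control how $\oplus$ interacts with the sets $\KU_t$.

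First I would note that it suffices to prove, for every $t>0$, the inclusion
\[
\KU_t\!\left((f\oplus g)^{\ast}\right)\supseteq \KU_t\!\left(f^{\ast}\oplus g^{\ast}\right),
\]
because both sides are upper semicontinuous geometric quasi-concave functions, and a pointwise inequality between such functions is equivalent to the reverse inclusion between all their upper level sets. Unwinding the definitions, the left-hand side equals $\KU_t(f\oplus g)^{\ast}$, and by Proposition~\ref{prop:sum-by-levels} (which applies because $f$ and $g$ have compact level sets), this is $(\KU_t(f)+\KU_t(g))^{\ast}$. Applying Proposition~\ref{prop:sum-by-levels} once more to the rotation-invariant functions $f^{\ast}$ and $g^{\ast}$, whose level sets are again compact (they are Euclidean balls of the appropriate radii), the right-hand side equals $\KU_t(f^{\ast})+\KU_t(g^{\ast})=\KU_t(f)^{\ast}+\KU_t(g)^{\ast}$.

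So the whole matter reduces to showing
\[
\bigl(\KU_t(f)+\KU_t(g)\bigr)^{\ast}\supseteq \KU_t(f)^{\ast}+\KU_t(g)^{\ast}
\]
for every $t>0$. Writing $A=\KU_t(f)$ and $B=\KU_t(g)$, both sides are Euclidean balls centered at the origin, so the inclusion is equivalent to a comparison of radii. The left-hand ball has radius $(\vol(A+B)/\vol(D))^{1/n}$, while the right-hand set is the sum of two concentric balls and therefore a ball of radius $(\vol(A)^{1/n}+\vol(B)^{1/n})/\vol(D)^{1/n}$. The desired inequality between these radii is exactly the classical Brunn--Minkowski inequality $\vol(A+B)^{1/n}\ge \vol(A)^{1/n}+\vol(B)^{1/n}$, which finishes the proof.

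There is essentially no obstacle once one organizes the argument this way: the only place where non-triviality enters is invoking classical Brunn--Minkowski. The mildly delicate point worth checking is the step that translates set-inclusions at every level $t$ into the pointwise inequality of functions, but for upper semicontinuous quasi-concave functions this is immediate from the identity $f(x)=\sup\{t>0:x\in \KU_t(f)\}$.
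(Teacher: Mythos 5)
Your proof is correct and follows essentially the same route as the paper: reduce to the level-set inclusion $\KU_t\left((f\oplus g)^{\ast}\right)\supseteq \KU_t\left(f^{\ast}\oplus g^{\ast}\right)$, use Proposition \ref{prop:sum-by-levels} on both sides, and invoke the rearrangement form $(A+B)^{\ast}\supseteq A^{\ast}+B^{\ast}$ of classical Brunn--Minkowski. You simply spell out the radius comparison that justifies the last step, which the paper remarks on just before stating the proposition.
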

In particular, we have 
\[
\int f\oplus g=\int\left(f\oplus g\right)^{\ast}\ge\int f^{\ast}\oplus g^{\ast}.
\]

\begin{proof}
It is enough to prove that $\KU_{t}\left(\left(f\oplus g\right)^{\ast}\right)\supseteq\KU_{t}\left(f^{\ast}\oplus g^{\ast}\right)$
for all $t$ . But 
\begin{eqnarray*}
\KU_{t}\left(\left(f\oplus g\right)^{\ast}\right) & = & \KU_{t}\left(f\oplus g\right)^{\ast}=\left(\KU_{t}(f)+\KU_{t}(g)\right)^{\ast}\\
 & \supseteq & \KU_{t}(f)^{\ast}+\KU_{t}(g)^{\ast}=\KU_{t}(f^{\ast})+\KU_{t}(g^{\ast})\\
 & = & \KU_{t}\left(f^{\ast}\oplus g^{\ast}\right),
\end{eqnarray*}
 so the result holds.
\end{proof}
We would now like to take even more general inequalities and cast
them to our setting. For example, the most general Brunn-Minkowski
inequality for mixed volumes states that for every $A,B,K_{1},K_{2},\ldots,K_{n-m}\in\kn$
we have 
\begin{eqnarray*}
V(\underbrace{A+B,\ldots,A+B}_{m\text{ times}},K_{1},\ldots,K_{n-m})^{\frac{1}{m}} & \ge & V(A,\ldots,A,K_{1},\ldots,K_{n-m})^{\frac{1}{m}}+\\
 &  & \ +V(B,\ldots,B,K_{1},\ldots,K_{n-m})^{\frac{1}{m}}.
\end{eqnarray*}
 In order to write such an inequality in our language, we will need
to define a generalized concept of rearrangement:
\begin{defn}

\begin{enumerate}
\item A size functional is a function $\Phi:\kn\to[0,\infty]$ of the form
\[
\Phi(A)=V(\underbrace{A,\ldots,A}_{m\text{ times}},K_{1},\ldots,K_{n-m}),
\]
 for fixed compact bodies $K_{1},K_{2},\ldots,K_{n-m}$ with non-empty
interior. We will say that $\Phi$ is of degree $m$.
\item If $K\in\kn$ is compact and $\Phi$ is a size functional of degree
$m$, define 
\[
K^{\Phi}=\left(\frac{\Phi(K)}{\Phi(D)}\right)^{\frac{1}{m}}\cdot D.
\]
 If, in addition, $f\in\qco$ has compact level sets, define $f^{\Phi}\in\qco$
by the relation
\[
\KU_{t}\left(f^{\Phi}\right)=\KU_{t}(f)^{\Phi}.
\]

\end{enumerate}
\end{defn}
In particular, we have $K^{\vol}=K^{\ast}$ and $f^{\vol}=f^{\ast}$
for a convex body $K$ and a quasi-concave function $f$. Notice that
$K^{\Phi}$ is the Euclidean ball of the same ``size'' as $K$,
where size is defined using the functional $\Phi$.

For functions the intuition is similar. If $\Phi:\kn\to[0,\infty]$
is a size functional, we can extend the domain of $\Phi$ to all of
$\qco$ in a natural way: If
\[
\Phi(A)=V(\underbrace{A,\ldots,A}_{m\text{ times}},K_{1},\ldots,K_{n-m}),
\]
 then 
\[
\Phi(f)=V(\underbrace{f,\ldots,f}_{m\text{ times}},\o_{K_{1}},\ldots,\o_{K_{n-m}}).
\]
 Notice that for every quasi-concave function $g\in\qco$ we have
\[
\Phi(g)=\int_{0}^{1}\Phi\left(\KU_{t}(g)\right)dt,
\]
 so in particular 
\begin{eqnarray*}
\Phi(f^{\Phi}) & = & \int_{0}^{1}\Phi\left(\KU_{t}(f^{\Phi})\right)dt=\int_{0}^{1}\Phi\left(\KU_{t}(f)^{\Phi}\right)dt\\
 & = & \int_{0}^{1}\Phi\left(\KU_{t}(f)\right)dt=\Phi(f).
\end{eqnarray*}
 This means that $f^{\Phi}$ is a rotation invariant function with
the same ``size'' as $f$. 

Now we can write the general Brunn-Minkowski inequality as a generalized
rearrangement inequality, both for convex bodies and for convex functions:
\begin{thm}
\label{prop:gen-BM}Let $\Phi$ be a size functional. Then
\begin{enumerate}
\item $\left(A+B\right)^{\Phi}\supseteq A^{\Phi}+B^{\Phi}$ for every compact
convex bodies $A,B\in\kn$.
\item $\left(f\oplus g\right)^{\Phi}\ge f^{\Phi}\oplus g^{\Phi}$ for every
quasi-concave functions $f,g\in\qco$ with compact level sets.
\end{enumerate}
\end{thm}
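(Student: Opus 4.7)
The plan is to reduce both parts to the classical general Brunn--Minkowski inequality for mixed volumes, which is already available to us, and to leverage the level-set description of $\oplus$ from Proposition \ref{prop:sum-by-levels}. Part (i) is essentially a restatement of the classical inequality, and part (ii) is obtained from part (i) by integrating over upper level sets, in direct analogy with the proof of Proposition \ref{prop:BM}.

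For (i), write $\Phi(A) = V(\underbrace{A,\ldots,A}_{m},K_1,\ldots,K_{n-m})$. I would first compute
\[
A^{\Phi}+B^{\Phi} = \left(\frac{\Phi(A)^{1/m}+\Phi(B)^{1/m}}{\Phi(D)^{1/m}}\right)\cdot D,
\]
while
\[
(A+B)^{\Phi} = \left(\frac{\Phi(A+B)^{1/m}}{\Phi(D)^{1/m}}\right)\cdot D.
\]
Since both sets are Euclidean balls centered at the origin, the desired inclusion $A^{\Phi}+B^{\Phi}\subseteq (A+B)^{\Phi}$ reduces to the comparison of radii $\Phi(A)^{1/m}+\Phi(B)^{1/m}\le \Phi(A+B)^{1/m}$, which is precisely the general Brunn--Minkowski inequality for mixed volumes quoted just before the theorem.

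For (ii), it suffices (as in Proposition \ref{prop:BM}) to verify the reverse inclusion on upper level sets. Using the definition of $f^{\Phi}$ and Proposition \ref{prop:sum-by-levels},
\[
\KU_t\!\left((f\oplus g)^{\Phi}\right) = \KU_t(f\oplus g)^{\Phi} = \bigl(\KU_t(f)+\KU_t(g)\bigr)^{\Phi},
\]
while
\[
\KU_t\!\left(f^{\Phi}\oplus g^{\Phi}\right) = \KU_t(f^{\Phi})+\KU_t(g^{\Phi}) = \KU_t(f)^{\Phi}+\KU_t(g)^{\Phi}.
\]
Applying part (i) to the convex bodies $A=\KU_t(f)$ and $B=\KU_t(g)$ gives the required containment for every $t>0$, which translates back into the pointwise inequality $(f\oplus g)^{\Phi}\ge f^{\Phi}\oplus g^{\Phi}$.

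There is no substantial obstacle: the only points requiring mild care are that $\KU_t(f)$ and $\KU_t(g)$ are compact (guaranteed by the hypothesis that the level sets are compact), so that Proposition \ref{prop:sum-by-levels} applies with equality, and that the $\Phi$ in the classical Brunn--Minkowski inequality is applied to genuine convex bodies, which is exactly the situation after passing to level sets. Everything else is a routine transcription of the classical statement into the language of rearrangements.
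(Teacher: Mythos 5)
Your proof is correct and follows essentially the same route as the paper: part (i) reduces to comparing radii of balls via the generalized Brunn--Minkowski inequality for mixed volumes, and part (ii) passes to upper level sets through Proposition \ref{prop:sum-by-levels} and invokes part (i), exactly as the paper does in analogy with Proposition \ref{prop:BM}.
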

\begin{proof}
First we deal with the case of bodies, where the proposition is just
a restatement of the generalized Brunn-Minkowski inequality: Notice
that $\left(A+B\right)^{\Phi}$ is a ball of radius 
\[
\left(\frac{\Phi(A+B)}{\Phi(D)}\right)^{\frac{1}{m}},
\]
 where $m$ is the degree of $\Phi$. Similarly, $A^{\Phi}+B^{\Phi}$
is a ball of radius 
\[
\left(\frac{\Phi(A)}{\Phi(D)}\right)^{\frac{1}{m}}+\left(\frac{\Phi(B)}{\Phi(D)}\right)^{\frac{1}{m}},
\]
 so the result follows immediately. 

For quasi-concave functions, we argue just like in Proposition \ref{prop:BM}:
\begin{eqnarray*}
\KU_{t}\left(\left(f\oplus g\right)^{\Phi}\right) & = & \KU_{t}\left(f\oplus g\right)^{\Phi}=\left(\KU_{t}(f)+\KU_{t}(g)\right)^{\Phi}\supseteq\KU_{t}(f)^{\Phi}+\KU_{t}(g)^{\Phi}\\
 & = & \KU_{t}(f^{\Phi})+\KU_{t}(g^{\Phi})=\KU_{t}\left(f^{\Phi}\oplus g^{\Phi}\right),
\end{eqnarray*}
 so $\left(f\oplus g\right)^{\Phi}\ge f^{\Phi}+g^{\Phi}$ like we
wanted.
\end{proof}
Again, we see as a corollary that 
\[
\Phi(f\oplus g)=\Phi\left(\left(f\oplus g\right)^{\Phi}\right)\ge\Phi\left(f^{\Phi}\oplus g^{\Phi}\right).
\]

Other geometric inequalities can be written in the same form as well.
The Alexandrov inequalities for quermassintegrals state that for every
$K\in\kn$ and every $0\le i<j<n$ we have 
\[
\left(\frac{W_{j}(K)}{W_{j}(D)}\right)^{\frac{1}{n-j}}\ge\left(\frac{W_{i}(K)}{W_{i}(D)}\right)^{\frac{1}{n-i}},
\]
 with equality if and only if $K$ is a ball. In the language of rearrangements,
we can write:
\begin{prop}
\label{prop:Alexandrov}Fix $0\le i<j<n$. Then 
\begin{enumerate}
\item $K^{W_{j}}\supseteq K^{W_{i}}$ for every compact, convex body $K\in\kn$.
If $K^{W_{j}}=K^{W_{i}}$ then $K$ is a ball.
\item $f^{W_{j}}\ge f^{W_{i}}$ for every quasi-concave $f\in\qco$ with
compact level sets. If $f^{W_{j}}=f^{W_{i}}$ , then $f$ is rotation
invariant.
\end{enumerate}
\end{prop}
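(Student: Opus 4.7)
The argument would run in exact parallel to the proofs of Proposition \ref{prop:BM} and Theorem \ref{prop:gen-BM}: apply the classical Alexandrov inequality level set by level set, then transfer the resulting inclusion back to functions.

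For (i), the sets $K^{W_j}$ and $K^{W_i}$ are by definition Euclidean balls centered at the origin, of radii
\[
r_j = \left(\frac{W_j(K)}{W_j(D)}\right)^{\frac{1}{n-j}} \quad\text{and}\quad r_i = \left(\frac{W_i(K)}{W_i(D)}\right)^{\frac{1}{n-i}}.
\]
The classical Alexandrov inequality for quermassintegrals, recalled just before the statement, is precisely $r_j \ge r_i$, so $K^{W_j} \supseteq K^{W_i}$. If $K^{W_j} = K^{W_i}$, then $r_j = r_i$, meaning equality holds in the classical Alexandrov inequality, forcing $K$ to be a ball.

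For (ii), using the defining relation $\KU_t(f^{W_j}) = \KU_t(f)^{W_j}$ of the rearrangement (and similarly with $W_i$), together with part (i) applied to the compact convex body $\KU_t(f)$, we obtain for every $t \in (0,1)$
\[
\KU_t\!\left(f^{W_j}\right) = \KU_t(f)^{W_j} \supseteq \KU_t(f)^{W_i} = \KU_t\!\left(f^{W_i}\right),
\]
which is equivalent to the pointwise inequality $f^{W_j} \ge f^{W_i}$. If $f^{W_j} = f^{W_i}$, then $\KU_t(f)^{W_j} = \KU_t(f)^{W_i}$ for every $t$, so by the equality case of (i), every $\KU_t(f)$ is a Euclidean ball. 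Invoking that $f$ is geometric, so that $0 \in \KU_t(f)$ for every $t \le 1$, together with the nested, upper-semicontinuous structure of the family $\{\KU_t(f)\}_{t>0}$, one argues as in Proposition \ref{prop:qc-isoperimetric} that these balls are centered at $0$, and hence $f$ is rotation invariant.

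The only step that is not a verbatim transcription of the proofs of Proposition \ref{prop:BM} and Theorem \ref{prop:gen-BM} is extracting rotation invariance from the equality case, and I expect this to be the main obstacle: the classical Alexandrov equality case only tells us that each level set is some ball, not that its center is the origin, so one must exploit the normalization $f(0) = \max f$ and the monotonicity of level sets in $t$ to pin the centers down.
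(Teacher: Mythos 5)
Your proposal matches the paper's proof exactly: part (i) is a direct reformulation of the classical Alexandrov inequalities (including the equality case), and part (ii) follows by applying (i) level set by level set, which is precisely the paper's one-line argument $\KU_t(f^{W_j})=\KU_t(f)^{W_j}\supseteq\KU_t(f)^{W_i}=\KU_t(f^{W_i})$.

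Your worry about the equality case in (ii) is well founded, and in fact it is a gap present in the paper's own text (both here and in the proof of Proposition \ref{prop:qc-isoperimetric}, which also simply asserts ``$\KU_t(f)$ is always a ball, so $f$ is rotation invariant''). The classical Alexandrov equality only forces each $\KU_t(f)$ to be \emph{some} Euclidean ball $B(c(t),r(t))$ containing the origin; combined with nesting this yields $|c(s)-c(t)|\le r(t)-r(s)$ for $t<s$, which does not force $c(t)\equiv 0$. For instance, in $\R^2$ the nested family $\KU_t(f)=B\bigl((t(1-t),0),\,1-t\bigr)$, $t\in(0,1]$, defines a geometric quasi-concave function with compact level sets, all of which are discs containing the origin, yet $f$ is not rotation invariant. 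So the resolution you sketch --- that geometricity and nestedness pin the centers at the origin --- does not actually go through, and the equality claim in (ii) as stated appears to need an extra hypothesis or a different argument. This issue is inherited from the paper rather than introduced by you, but it is worth noting that the step you flag as the main obstacle really is a gap, not merely an omitted routine check.
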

\begin{proof}
Since $K^{W_{j}}$ is simply a ball of radius 
\[
\left(\frac{W_{j}(K)}{W_{j}(D)}\right)^{\frac{1}{n-j}},
\]
 the first claim is just a reformulation of the Alexandrov inequalities.
The second claim will follow easily by comparing level sets:

\[
\KU_{t}(f^{W_{j}})=\KU_{t}(f)^{W_{j}}\supseteq\KU_{t}(f)^{W_{i}}=\KU(f^{W_{i}}).
\]

\end{proof}
Again, we see as a corollary that if $f\in\qco$ then for every $j>i$
the function $g=f^{W_{i}}$ is rotation invariant, and satisfies 
\begin{eqnarray*}
W_{i}(g) & = & W_{i}(f^{W_{i}})=W_{i}(f)\\
W_{j}(g) & = & W_{j}(f^{W_{i}})\le W_{j}(f^{W_{j}})=W_{j}(f).
\end{eqnarray*}
 The case $i=0$, $j=1$ is just the isoperimetric inequality proven
earlier.

Now we would like to prove a version of the powerful Alexandrov-Fenchel
inequality. We will state and prove the proposition, and the proof
will explain in what way this is really an Alexandrov-Fenchel type
theorem
\begin{thm}
\label{prop:AF}Fix a size functional 
\[
\Phi(A)=V(\underbrace{A,\ldots,A}_{m\text{ times}},K_{1},\ldots,K_{n-m}).
\]
 Then:
\begin{enumerate}
\item For every compact bodies $A_{1},\ldots,A_{m}\in\kn$ we have 
\[
V(A_{1},\ldots,A_{m},K_{1},\ldots,K_{n-m})\ge V(A_{1}^{\Phi},\ldots,A_{m}^{\Phi},K_{1},\ldots,K_{n-m}).
\]

\item For every functions $f_{1},\ldots,f_{m}\in\qco$ with compact level
sets we have 
\[
V(f_{1},\ldots,f_{m},\o_{K_{1}},\ldots,\o_{K_{n-m}})\ge V(f_{1}^{\Phi},\ldots,f_{m}^{\Phi},\o_{K_{1}},\ldots,\o_{K_{n-m}}).
\]

\end{enumerate}
\end{thm}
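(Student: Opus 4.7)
My plan is to reduce (i) to the iterated form of the classical Alexandrov--Fenchel inequality, and then to deduce (ii) from (i) by integrating over level sets, using the representation of mixed integrals from Definition \ref{def:mixed-integrals}.

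For (i), the first step is to apply the standard iterated consequence of Alexandrov--Fenchel, namely
\[
V(A_1, \ldots, A_m, K_1, \ldots, K_{n-m})^m \ge \prod_{i=1}^m V(\underbrace{A_i, \ldots, A_i}_{m\text{ times}}, K_1, \ldots, K_{n-m}) = \prod_{i=1}^m \Phi(A_i).
\]
Taking $m$-th roots yields $V(A_1, \ldots, A_m, K_1, \ldots, K_{n-m}) \ge \prod_{i=1}^m \Phi(A_i)^{1/m}$. The second step is to compute the right-hand side of the desired inequality directly. Since $A_i^\Phi = r_i \, D$ with $r_i = \bigl(\Phi(A_i)/\Phi(D)\bigr)^{1/m}$, multilinearity of the classical mixed volume gives
\[
V(A_1^\Phi, \ldots, A_m^\Phi, K_1, \ldots, K_{n-m}) = \Bigl(\prod_{i=1}^m r_i\Bigr) \cdot V(\underbrace{D, \ldots, D}_{m\text{ times}}, K_1, \ldots, K_{n-m}) = \prod_{i=1}^m \Phi(A_i)^{1/m},
\]
which matches the lower bound from the previous step. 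Combining the two displays proves (i).

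For (ii), I invoke Definition \ref{def:mixed-integrals} to write both sides as integrals of classical mixed volumes over the level parameter $t>0$:
\[
V(f_1, \ldots, f_m, \o_{K_1}, \ldots, \o_{K_{n-m}}) = \int_0^\infty V\bigl(\KU_t(f_1), \ldots, \KU_t(f_m), K_1, \ldots, K_{n-m}\bigr)\, dt,
\]
and similarly for the $f_i^\Phi$. The key observation is that the definition of $f^\Phi$ ensures $\KU_t(f_i^\Phi) = \KU_t(f_i)^\Phi$ for every $t > 0$. Applying (i) pointwise in $t$ to the sets $A_i = \KU_t(f_i)$ and integrating over $t$ then yields (ii).

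There is no serious obstacle once (i) is in place. The only conceptual point is that the rearrangement $A^\Phi$ has been engineered precisely so that multilinearity converts $V(A_1^\Phi, \ldots, A_m^\Phi, K_1, \ldots, K_{n-m})$ into the geometric mean $\prod_i \Phi(A_i)^{1/m}$, which is exactly the lower bound produced by the iterated Alexandrov--Fenchel inequality; in this sense the rearrangement statement is a repackaging of Alexandrov--Fenchel. The only ingredient needing an external reference is the iterated form of Alexandrov--Fenchel itself, for which \cite{schneider_convex_1993} suffices.
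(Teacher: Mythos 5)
Your proposal is correct and follows essentially the same route as the paper: for (i), reduce to the iterated Alexandrov--Fenchel inequality by computing $V(A_1^\Phi,\ldots,A_m^\Phi,K_1,\ldots,K_{n-m})=\prod_i\Phi(A_i)^{1/m}$ via multilinearity; for (ii), write both mixed integrals as integrals of classical mixed volumes over level sets, use $\KU_t(f_i^\Phi)=\KU_t(f_i)^\Phi$, and apply (i) pointwise in $t$.
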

\begin{proof}
For (i), notice that the right hand side is actually 
\[
V\left(\left(\frac{\Phi(A_{1})}{\Phi(D)}\right)^{\frac{1}{m}}D,\ldots,\left(\frac{\Phi(A_{m})}{\Phi(D)}\right)^{\frac{1}{m}}D,K_{1},\ldots,K_{n-m}\right),
\]
which is equal to 
\[
\prod_{i=1}^{m}\left(\frac{\Phi(A_{i})}{\Phi(D)}\right)^{\frac{1}{m}}\cdot V(D,D,\ldots,D,K_{1},\ldots,K_{n-m})=\prod_{i=1}^{m}\Phi(A_{i})^{\frac{1}{m}}.
\]
 Therefore the inequality we need to prove is 
\[
V(A_{1},\ldots A_{m},K_{1},\ldots,K_{n-m})\ge\prod_{i=1}^{m}\Phi(A_{i})^{\frac{1}{m}},
\]
 which is exactly the Alexandrov-Fenchel inequality. Hence the result
holds (sometimes the Alexandrov-Fenchel inequality is stated only
for $m=2$, but the general case is also well known and follows by
induction). 

For (ii), we calculate and get 
\begin{eqnarray*}
V(f_{1}^{\Phi},\ldots,f_{m}^{\Phi},\o_{K_{1}},\ldots,\o_{K_{n-m}}) & = & \int_{0}^{1}V(\KU_{t}(f_{1}^{\Phi}),\ldots,\KU_{t}(f_{m}^{\Phi}),K_{1},\ldots,K_{n-m})dt\\
 & = & \int_{0}^{1}V(\KU_{t}(f_{1})^{\Phi},\ldots,\KU_{t}(f_{m})^{\Phi},K_{1},\ldots,K_{n-m})dt\\
 & \le & \int_{0}^{1}V(\KU_{t}(f_{1}),\ldots,\KU_{t}(f_{m}),K_{1},\ldots,K_{n-m})dt\\
 & = & V(f_{1},\ldots,f_{m},\o_{K_{1}},\ldots,\o_{K_{n-m}}).
\end{eqnarray*}
 This completes the proof. 
\end{proof}
The case $m=n$ and $\Phi=\vol$ in the last proposition is especially
elegant, so we will state it as a corollary:
\begin{cor}
\label{cor:from-AF}For every functions $f_{1},\ldots,f_{n}\in\qco$
with compact level sets we have 
\[
V(f_{1},f_{2},\ldots,f_{n})\ge V(f_{1}^{\ast},f_{2}^{\ast},\ldots,f_{n}^{\ast}).
\]

\end{cor}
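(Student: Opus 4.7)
The plan is to derive this corollary as the endpoint case $m = n$ of Theorem \ref{prop:AF}(ii), taking the size functional $\Phi$ to be the Lebesgue volume. First I would observe that $\vol(A) = V(A, A, \ldots, A)$ ($n$ copies) fits the defining form of a size functional, with an empty list of ancillary bodies $K_{1},\ldots,K_{n-m}$ and degree $m = n$. So $\vol$ is a legitimate size functional in the sense of the paper.

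Next I would check that the rearrangement $f^{\Phi}$ attached to $\Phi = \vol$ is nothing but the symmetric decreasing rearrangement $f^{\ast}$. By definition,
\[
K^{\vol} = \left(\frac{\vol(K)}{\vol(D)}\right)^{\frac{1}{n}}\cdot D
\]
is the Euclidean ball with the same volume as $K$, which is exactly $K^{\ast}$ from Definition \ref{def:SDR}. Passing to level sets,
\[
\KU_{t}(f^{\vol}) = \KU_{t}(f)^{\vol} = \KU_{t}(f)^{\ast} = \KU_{t}(f^{\ast}),
\]
so $f^{\vol} = f^{\ast}$ for every $f\in\qco$ with compact upper level sets. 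With these identifications, Theorem \ref{prop:AF}(ii), applied with $m = n$ and no $\o_{K_{i}}$ factors on either side, reads
\[
V(f_{1},\ldots,f_{n}) \ge V(f_{1}^{\vol},\ldots,f_{n}^{\vol}) = V(f_{1}^{\ast},\ldots,f_{n}^{\ast}),
\]
which is exactly the desired inequality.

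There is no real obstacle here, as the entire content is already in Theorem \ref{prop:AF}. The only thing to confirm at the endpoint $m = n$ is that the classical Alexandrov--Fenchel inequality specializes to $V(A_{1},\ldots,A_{n}) \ge \prod_{i=1}^{n}\vol(A_{i})^{1/n}$, which is a standard consequence of the general inequality and is precisely what is invoked in the proof of Theorem \ref{prop:AF}(i). If a self-contained argument is preferred, one can combine Definition \ref{def:mixed-integrals} with this pointwise inequality: integrating the estimate $V(\KU_{t}(f_{1}),\ldots,\KU_{t}(f_{n})) \ge \prod_{i}\vol(\KU_{t}(f_{i}))^{1/n}$ over $t \in (0,1)$ and recognizing the right-hand side (using that each $\KU_{t}(f_{i}^{\ast})$ is a ball of radius $(\vol \KU_{t}(f_{i})/\vol D)^{1/n}$) as $V(\KU_{t}(f_{1}^{\ast}),\ldots,\KU_{t}(f_{n}^{\ast}))$ yields the conclusion directly.
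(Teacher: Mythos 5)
Your argument is exactly the paper's: the corollary is obtained by specializing Theorem \ref{prop:AF}(ii) to $m=n$ and $\Phi=\vol$, after observing that $K^{\vol}=K^{\ast}$ and hence $f^{\vol}=f^{\ast}$. The extra self-contained paragraph you offer simply unwinds the proof of Theorem \ref{prop:AF} and is consistent with it.
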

Notice that the last corollary generalizes the isoperimetric inequality.
In fact, one may define a generalized surface area as 
\[
S^{(g)}(f)=V(\underbrace{f,f,\ldots,f}_{n-1\text{ times}},g),
\]
 where $g$ is some fixed rotation invariant quasi-concave function
(natural candidates may be the exponential function $g(x)=e^{-\left|x\right|}$and
the Gaussian $g(x)=e^{-\left|x\right|^{2}/2}$). From Corollary \ref{cor:from-AF}
it follows immediately that $S^{(g)}(f)\ge S^{(g)}(f^{\ast})$ for
every $f\in\qco$.
\begin{rem}
It is clear that one may work with even more general size functionals.
A natural candidate seems to be 
\[
\Phi(A)=V(\underbrace{\o_{A},\o_{A},\ldots,\o_{A}}_{m\text{ times}},g_{1},g_{2},\ldots,g_{n-m}),
\]
 for some fixed quasi-concave functions $g_{1},\ldots,g_{n-m}\in\qco$.
There are, however, some major difficulties. The main problem is that
if we extend such a $\Phi$ to $\qco$ in the standard way, we do
not necessarily have 
\[
\Phi(f)=\Phi\left(f^{\Phi}\right)
\]
 Thus it is difficult to think of $f^{\Phi}$ as a rearrangement of
$f$ in any real sense. 

However, assume that the functions $g_{i}$ satisfy $0<\int g_{i}<\infty$
and have homothetic level sets, i.e. $\KU_{t}(g_{i})=c_{i}(t)\cdot K_{i}$
for some function $c_{i}(t)$ and some $K_{i}\in\kn$. If we define
\[
\Psi(A)=V(A,A,\ldots,A,K_{1},K_{2},\ldots,K_{n-m}),
\]
 then for every $A\in\kn$ we get 
\begin{eqnarray*}
\Phi(A) & = & \int_{0}^{1}V(A,\ldots,A,\KU_{t}(g_{1}),\ldots,\KU_{t}(g_{n-m}))dt\\
 & = & \int_{0}^{1}V(A,\ldots,A,K_{1},\ldots,K_{n-m})\cdot c_{1}(t)c_{2}(t)\cdots c_{n-m}(t)dt\\
 & = & \left[\int_{0}^{1}c_{1}(t)c_{2}(t)\cdots c_{n-m}(t)dt\right]\cdot\Psi(A)=C\cdot\Psi(A).
\end{eqnarray*}
Since $0<\int g_{i}<\infty$ we have $0<C<\infty$, and it follows
immediately that $A^{\Phi}=A^{\Psi}$. Hence $f^{\Phi}=f^{\Psi}$
for all $f\in\qco$. 

Since
\begin{eqnarray*}
\Phi(f) & = & V(f,\ldots,f,g_{1},\ldots,g_{n-m})\\
 & = & \int_{0}^{1}V(\KU_{t}(f),\ldots,\KU_{t}(f),\KU_{t}(g_{1}),\ldots,\KU_{t}(g_{n-m}))dt\\
 & = & \int_{0}^{1}c_{1}(t)\cdots c_{n-m}(t)\cdot V(\KU_{t}(f),\ldots,\KU_{t}(f),K_{1},\ldots,K_{n-m})dt\\
 & = & \int_{0}^{1}c_{1}(t)\cdots c_{n-m}(t)\cdot\Psi\left(\KU_{t}(f)\right)dt,
\end{eqnarray*}
 and similarly 
\begin{eqnarray*}
\Phi(f^{\Phi})=\Phi(f^{\Psi}) & = & \int_{0}^{1}c_{1}(t)\cdots c_{n-m}(t)\cdot\Psi\left(\KU_{t}(f^{\Psi})\right)dt\\
 & = & \int_{0}^{1}c_{1}(t)\cdots c_{n-m}(t)\cdot\Psi\left(\KU_{t}(f)^{\Psi}\right)dt,
\end{eqnarray*}
we conclude that in this specific case we do have $\Phi(f^{\Phi})=\Phi(f).$ 

Similarly, Propositions \ref{prop:gen-BM} and \ref{prop:AF} remain
true in this extended case:
\begin{prop}
\label{prop:gen-gen-BM}Let 
\[
\Phi(A)=V(\underbrace{\o_{A},\o_{A},\ldots,\o_{A}}_{m\text{ times}},g_{1},g_{2},\ldots,g_{n-m}),
\]
 be a generalized size functional, with $g_{i}\in\qco$ having homothetic
level sets. Then for every geometric quasi-concave functions $f,g\in\qco$
with compact level sets we have 
\[
\left(f\oplus g\right)^{\Phi}\ge f^{\Phi}\oplus g^{\Phi}.
\]

\end{prop}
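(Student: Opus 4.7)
The plan is to reduce the claim to Theorem \ref{prop:gen-BM}(ii) via the identity $\Phi = C\cdot\Psi$ already derived in the remark immediately preceding the statement.

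First, using the hypothesis that $\KU_t(g_i) = c_i(t)\cdot K_i$ for each $i$, introduce the ordinary size functional
\[
\Psi(A) = V(\underbrace{A,\ldots,A}_{m\text{ times}},K_1,K_2,\ldots,K_{n-m}),
\]
which is a size functional in the sense of the definition preceding Theorem \ref{prop:gen-BM}. The computation carried out in the remark shows that $\Phi(A) = C\cdot \Psi(A)$ on all of $\kn$, with
\[
C = \int_0^1 c_1(t)c_2(t)\cdots c_{n-m}(t)\,dt.
\]
The assumption $0 < \int g_i < \infty$ for each $i$ ensures $0 < C < \infty$, and likewise forces each $K_i$ to be a genuine convex body with nonempty interior (so that $\Psi$ is in fact a size functional, not merely a trivial one). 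Because $A^\Phi = \bigl(\Phi(A)/\Phi(D)\bigr)^{1/m}\cdot D$ depends only on the ratio $\Phi(A)/\Phi(D)$, the multiplicative constant $C$ cancels and $A^\Phi = A^\Psi$ for every compact $A\in\kn$. Taking level sets, this propagates to $f^\Phi = f^\Psi$ for every $f\in\qco$ with compact upper level sets.

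Second, apply Theorem \ref{prop:gen-BM}(ii) to the standard size functional $\Psi$ together with the two functions $f,g$. This yields
\[
(f\oplus g)^\Psi \ge f^\Psi \oplus g^\Psi,
\]
and substituting the identities $f^\Phi = f^\Psi$, $g^\Phi = g^\Psi$, and $(f\oplus g)^\Phi = (f\oplus g)^\Psi$ produces exactly the required inequality $(f\oplus g)^\Phi \ge f^\Phi \oplus g^\Phi$.

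There is essentially no genuine obstacle: once the remark's identity $\Phi = C\cdot\Psi$ is in hand, the present statement collapses verbatim to the already established generalized Brunn--Minkowski inequality for ordinary size functionals. The only steps needing any care are verifying that each $K_i$ is a body with interior (so $\Psi$ qualifies as a size functional in the earlier sense) and checking that the rescaling map $A\mapsto A^{\Phi}$ is truly insensitive to rescaling $\Phi$ by a positive constant; both of these are immediate from the definitions.
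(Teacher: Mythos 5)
Your proof is correct and takes precisely the route the paper intends: the paper simply says ``one may simply replace $\Phi$ with $\Psi$'' and leaves the details to the reader, and your write-up fills in exactly those details (the identity $\Phi = C\cdot\Psi$, the cancellation of $C$ in $A^\Phi$, and the reduction to Theorem \ref{prop:gen-BM}(ii)).
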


\begin{prop}
Let $\Phi$ be a generalized size functional like in Proposition \ref{prop:gen-gen-BM}.
Then for every functions $f_{1},\ldots,f_{m}\in\qco$ with compact
level sets we have 
\[
V(f_{1},\ldots,f_{m},g_{1},\ldots,g_{n-m})\ge V(f_{1}^{\Phi},\ldots,f_{m}^{\Phi},g_{1},\ldots,g_{n-m}).
\]

\end{prop}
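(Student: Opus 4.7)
The plan is to reduce the statement to the already-established Alexandrov--Fenchel inequality for classical size functionals (Theorem \ref{prop:AF}(i)), using the same layer-by-layer technique as in the proof of Theorem \ref{prop:AF}(ii), and exploiting the structural observation from the preceding remark that $f^{\Phi}=f^{\Psi}$, where $\Psi(A)=V(A,\ldots,A,K_{1},\ldots,K_{n-m})$ is the genuine size functional built from the homothety directions $K_{i}$ of the $g_{i}$'s.

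Concretely, I would first unfold the right-hand mixed integral by level sets via Definition \ref{def:mixed-integrals}, then use the homothety $\KU_{t}(g_{i})=c_{i}(t)\cdot K_{i}$ together with the multilinearity of classical mixed volumes to pull out the scalar factor $c_{1}(t)\cdots c_{n-m}(t)$:
\begin{align*}
V(f_{1}^{\Phi},\ldots,f_{m}^{\Phi},g_{1},\ldots,g_{n-m}) &= \int_{0}^{1} V\bigl(\KU_{t}(f_{1}^{\Phi}),\ldots,\KU_{t}(f_{m}^{\Phi}),\KU_{t}(g_{1}),\ldots,\KU_{t}(g_{n-m})\bigr)\,dt \\
 &= \int_{0}^{1} c_{1}(t)\cdots c_{n-m}(t)\, V\bigl(\KU_{t}(f_{1})^{\Psi},\ldots,\KU_{t}(f_{m})^{\Psi},K_{1},\ldots,K_{n-m}\bigr)\,dt,
\end{align*}
where I have used $\KU_{t}(f_{i}^{\Phi})=\KU_{t}(f_{i})^{\Phi}=\KU_{t}(f_{i})^{\Psi}$ (the last equality being the identity $A^{\Phi}=A^{\Psi}$ from the remark).

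Next I apply Theorem \ref{prop:AF}(i) for the classical size functional $\Psi$ to the bodies $\KU_{t}(f_{i})$ inside the integrand, which gives the pointwise-in-$t$ inequality
\[
V\bigl(\KU_{t}(f_{1})^{\Psi},\ldots,\KU_{t}(f_{m})^{\Psi},K_{1},\ldots,K_{n-m}\bigr) \le V\bigl(\KU_{t}(f_{1}),\ldots,\KU_{t}(f_{m}),K_{1},\ldots,K_{n-m}\bigr).
\]
Multiplying by $c_{1}(t)\cdots c_{n-m}(t)\ge 0$, integrating in $t$, and reassembling the right-hand side back into a mixed integral by again using $\KU_{t}(g_{i})=c_{i}(t)K_{i}$ and Definition \ref{def:mixed-integrals} yields exactly $V(f_{1},\ldots,f_{m},g_{1},\ldots,g_{n-m})$, completing the argument.

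No serious obstacle is expected: all of the heavy lifting was done in Theorem \ref{prop:AF}(i) (the classical Alexandrov--Fenchel inequality), and the remark already supplies the identification $f^{\Phi}=f^{\Psi}$ which is the only non-trivial ingredient needed to translate the hypothesis back into a statement about a classical size functional. The mildly delicate step is just bookkeeping: making sure the factor $c_{1}(t)\cdots c_{n-m}(t)$ is inserted and removed consistently on both sides so that the inequality survives integration.
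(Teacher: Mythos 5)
Your proof is correct and is exactly the route the paper intends: the authors say only ``one may simply replace $\Phi$ with $\Psi$'' and leave the details to the reader, and your argument supplies precisely those details — level-set decomposition, extraction of the factors $c_1(t)\cdots c_{n-m}(t)$, the identification $A^\Phi=A^\Psi$ from the remark, pointwise application of Theorem \ref{prop:AF}(i) to $\Psi$, and integration. No gap.
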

The proofs are simple, as one may simply replace $\Phi$ with $\Psi$.
We leave the details to the reader.
\end{rem}

\section{\label{sec:LC-inequalities}Inequalities for log-concave functions}

We now turn our attention to the log-concave case. It turns out that
for functions which are both geometric and log-concave, one can use
some 1-dimensional estimates, and prove some of the inequalities of
the previous section in a more familiar form. 

First, we will need to know that the class of log-concave functions
is preserved under rearrangements.
\begin{prop}
\label{prop:fs-log-concave}Let $\Phi$ be a size functional. If $f$
is log-concave, so is $f^{\Phi}.$\end{prop}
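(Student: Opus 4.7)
The strategy is to exploit the fact that $f^{\Phi}$ is rotation invariant and reduce log-concavity to a one-dimensional statement about the $\Phi$-radii of the level sets of $f$.

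Write $f=e^{-\phi}$ with $\phi$ convex and lower semicontinuous, and let $m$ be the degree of $\Phi$. For each $s\in\R$ set
\[
R(s)=\left(\frac{\Phi\!\left(\KL_{s}(\phi)\right)}{\Phi(D)}\right)^{\!1/m},
\]
so that $\KL_{s}(\phi)^{\Phi}=\KU_{e^{-s}}(f)^{\Phi}$ is the Euclidean ball of radius $R(s)$. Since $f^{\Phi}$ is rotation invariant, I can write $f^{\Phi}(x)=\psi(|x|)$, and the definition of $f^{\Phi}$ via level sets gives $-\log\psi(u)=\inf\{s:R(s)\ge u\}$.

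The heart of the proof is to show that $R$ is nondecreasing and concave on $\R$. Monotonicity is immediate: $s\mapsto\KL_{s}(\phi)$ is nondecreasing in inclusion and mixed volumes are monotone in each entry, so $\Phi$ is monotone. For concavity, the convexity of $\phi$ yields the level-set inclusion
\[
\KL_{(1-\lambda)s_{1}+\lambda s_{2}}(\phi)\supseteq(1-\lambda)\KL_{s_{1}}(\phi)+\lambda\KL_{s_{2}}(\phi).
\]
Combining monotonicity of $\Phi$ with the Brunn--Minkowski inequality for the size functional $\Phi$ (which is exactly the statement of Theorem \ref{prop:gen-BM}(i) together with the homogeneity of $\Phi$) gives
\[
R((1-\lambda)s_{1}+\lambda s_{2})\ge(1-\lambda)R(s_{1})+\lambda R(s_{2}),
\]
so $R$ is concave.

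Once $R$ is nondecreasing and concave, its left-continuous generalized inverse $\tau(u):=\inf\{s:R(s)\ge u\}$ is automatically nondecreasing and convex as a function of $u\ge0$. But $\tau(u)=-\log\psi(u)$, so $-\log\psi$ is convex and nondecreasing. Since $x\mapsto|x|$ is convex, composing a convex nondecreasing function with a convex function yields a convex function, giving that $-\log f^{\Phi}(x)=(-\log\psi)(|x|)$ is convex, i.e.\ $f^{\Phi}$ is log-concave.

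The main technical obstacle I anticipate is the inversion step: $R$ may fail to be strictly increasing (it is constant outside the support of $\phi$ and may have flat spots where the level sets jump), so one has to take some care in the generalized inverse. This is handled cleanly by using the left-continuous inverse and observing that concavity of a nondecreasing function passes to convexity of its generalized inverse. A second minor issue is guaranteeing that the level sets are compact and nonempty on a suitable range of $s$, which follows from the compact-level-sets hypothesis implicit in the definition of $f^{\Phi}$.
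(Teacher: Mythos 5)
Your proof is correct, but it takes a genuinely different (and somewhat longer) route than the paper's. Both arguments hinge on the same key input, the generalized Brunn--Minkowski rearrangement inclusion $(A+B)^{\Phi}\supseteq A^{\Phi}+B^{\Phi}$ of Theorem \ref{prop:gen-BM}(i), together with the fact that $K\mapsto K^{\Phi}$ preserves inclusions. The paper applies these directly at the level of level sets: it uses the characterization that $f$ is log-concave if and only if $\lambda\KU_{t}(f)+(1-\lambda)\KU_{s}(f)\subseteq\KU_{t^{\lambda}s^{1-\lambda}}(f)$ for all $s,t>0$ and $0<\lambda<1$, and then transfers this inclusion to $f^{\Phi}$ in two lines via
\[
\lambda\KU_{t}(f^{\Phi})+(1-\lambda)\KU_{s}(f^{\Phi})\subseteq\left[\lambda\KU_{t}(f)+(1-\lambda)\KU_{s}(f)\right]^{\Phi}\subseteq\KU_{t^{\lambda}s^{1-\lambda}}(f)^{\Phi}=\KU_{t^{\lambda}s^{1-\lambda}}(f^{\Phi}).
\]
No radial profile or inverse ever appears. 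Your approach first reduces to one dimension via the $\Phi$-radius $R$, establishes its concavity (which, since $\KU_{t}(f^{\Phi})$ is the ball of radius $R(-\log t)$, is literally the paper's level-set inclusion for $f^{\Phi}$ read off on radii), and then inverts. The inversion concern you raise at the end is genuine but resolvable exactly as you indicate: the hypograph of the nondecreasing concave $R$ in the $(s,u)$-plane becomes, under the coordinate swap $(s,u)\mapsto(u,s)$, the epigraph of $\tau$ in the $(u,s)$-plane, so convexity of $\tau$ is automatic and no separate case analysis for flat spots is needed. Still, the level-set argument sidesteps the detour through $R$, its generalized inverse, and the composition step entirely, and is noticeably cleaner; your version buys a more explicit description of the radial profile of $f^{\Phi}$ at the price of extra bookkeeping.
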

\begin{proof}
One can express log-concavity in terms of level-sets. A function $f$
is log-concave if and only if 
\[
\lambda\KU_{t}(f)+(1-\lambda)\KU_{s}(f)\subseteq\KU_{t^{\lambda}s^{1-\lambda}}(f)
\]
 for every $s,t>0$ and every $0<\lambda<1$. 

Using Proposition \ref{prop:gen-BM}(i), we get
\begin{eqnarray*}
\lambda\KU_{t}(f^{\Phi})+(1-\lambda)\KU_{s}(f^{\Phi}) & = & \lambda\KU_{t}(f)^{\Phi}+(1-\lambda)\KU_{s}(f)^{\Phi}\\
 & \subseteq & \left[\lambda\KU_{t}(f)+(1-\lambda)\KU_{s}(f)\right]^{\Phi}\\
 & \subseteq & \KU_{t^{\lambda}s^{1-\lambda}}(f)^{\Phi}=\KU_{t^{\lambda}s^{1-\lambda}}(f^{\Phi}),
\end{eqnarray*}
 so $f^{\Phi}$ is indeed log-concave. 
\end{proof}
Next, we will need a 1-dimensional moment estimate for log-concave
functions:
\begin{prop}
\label{prop:moments}Let $f:[0,\infty)\to[0,1]$ be a log-concave
function with $f(0)=1$. Then for every $0<k<m$ we have 
\[
\left[\frac{1}{\Gamma(m+1)}\int_{0}^{\infty}x^{m}f(x)dx\right]^{\frac{1}{m+1}}\le\left[\frac{1}{\Gamma(k+1)}\int_{0}^{\infty}x^{k}f(x)dx\right]^{\frac{1}{k+1}},
\]
 with equality if and only if $f(x)=e^{-cx}$ for some $c>0$.\end{prop}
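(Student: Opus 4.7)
The strategy is to compare $f$ against a suitable exponential and exploit a one-sign-change property. Set
\[
c = \left(\frac{1}{\Gamma(k+1)}\int_0^\infty x^k f(x)\,dx\right)^{1/(k+1)}
\]
and let $g(x)=e^{-x/c}$. A direct computation gives $\int_0^\infty x^p g(x)\,dx = c^{p+1}\Gamma(p+1)$ for every $p\geq 0$, so $g$ matches the normalized $k$-th moment of $f$. The claim thus reduces to proving $\int_0^\infty x^m f(x)\,dx \leq \int_0^\infty x^m g(x)\,dx$, since then the $(m+1)$-th root of the normalized $m$-th moment of $f$ will be at most $c$.

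The key step is to pin down the sign structure of $f-g$. Since $\log g(x)=-x/c$ is affine and $\log f$ is concave, the function $h=\log f-\log g$ is concave on the support of $f$ and satisfies $h(0)=0$. A concave function vanishing at $0$ is either identically zero, has constant sign on $(0,\infty)$, or there is a unique $x_0\in(0,\infty)$ with $h\geq 0$ on $[0,x_0]$ and $h\leq 0$ on $[x_0,\infty)$. The constant-sign alternatives would make $\int_0^\infty x^k(f-g)\,dx$ strictly nonzero, contradicting our choice of $c$ (unless $f\equiv g$). Hence either $f\equiv g$, or there is a unique $x_0$ with $f\geq g$ on $[0,x_0]$ and $f\leq g$ on $[x_0,\infty)$.

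Given this crossing structure, I would introduce $\phi(x)=x^k(f(x)-g(x))$, so $\phi\geq 0$ on $[0,x_0]$, $\phi\leq 0$ on $[x_0,\infty)$, and $\int_0^\infty\phi\,dx=0$ by the moment-matching condition. Using this identity one can rewrite
\[
\int_0^\infty x^m(f-g)\,dx \;=\; \int_0^\infty x^{m-k}\phi(x)\,dx \;=\; \int_0^\infty\bigl(x^{m-k}-x_0^{m-k}\bigr)\phi(x)\,dx.
\]
Since $m>k$, the factor $x^{m-k}-x_0^{m-k}$ is $\leq 0$ on $[0,x_0]$ and $\geq 0$ on $[x_0,\infty)$, that is, it has the opposite sign to $\phi$ on each piece. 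The integrand is therefore pointwise nonpositive, and integrating yields the required moment inequality.

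The principal technical obstacle is handling the degeneracies in the sign analysis: if $f$ vanishes on a tail, $f<g$ there, which is consistent with the same one-crossing pattern; the case $f\equiv g$ is trivial. For the equality case one traces the argument back: equality forces $\bigl(x^{m-k}-x_0^{m-k}\bigr)\phi(x)=0$ almost everywhere, and since the first factor vanishes only at the single point $x=x_0$, we must have $\phi\equiv 0$ and hence $f(x)=g(x)=e^{-x/c}$ on $[0,\infty)$.
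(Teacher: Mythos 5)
Your proof is correct, and it takes a genuinely different route from the paper's. The paper invokes Borell's lemma: for log-concave $f$ on $[0,\infty)$ the function $\phi(p)=\frac{1}{\Gamma(p+1)}\int_0^\infty x^p f(x)\,dx$ is itself log-concave on $(-1,\infty)$, and since $\phi(p)\to f(0)=1$ as $p\to -1^+$, one interpolates between the endpoints $p=-1$ and $p=m$ to place $\phi(k)$ above the geometric mean $\phi(m)^{(k+1)/(m+1)}$. You instead normalize by a tuned exponential $g(x)=e^{-x/c}$ that matches the $k$-th moment, use concavity of $\log f - \log g$ together with the pinned value at $0$ to force a single sign change of $f-g$, and then apply the standard trick of subtracting $x_0^{m-k}\int \phi$ (which vanishes) to make the integrand $\bigl(x^{m-k}-x_0^{m-k}\bigr)x^k(f-g)$ pointwise nonpositive. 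Your argument is self-contained and elementary, effectively reproving the particular consequence of Borell's lemma that is needed; the paper's route is shorter as written but relies on citing the full (and stronger) statement that $\phi$ is log-concave on the whole interval. Both handle the equality case cleanly, and your treatment of the degeneracies (finite support of $f$, the constant-sign alternatives being excluded by moment matching) is adequate.
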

\begin{proof}
A known result (\cite{borell_complements_1973}, see also \cite{bobkov_concentration_2011})
states that if $f:[0,\infty)\to[0,\infty)$ is log-concave, then the
function 
\[
\phi(p)=\frac{1}{\Gamma(p+1)}\int_{0}^{\infty}x^{p}f(x)dx
\]
 is log-concave on $\left(-1,\infty\right)$. Since $\phi(p)\to f(0)=1$
as $p\to-1$, we get 
\[
\phi(k)=\phi\left(\frac{k+1}{m+1}\cdot m+\frac{m-k}{m+1}\cdot(-1)\right)\ge\phi(m)^{\frac{k+1}{m+1}}\cdot1^{\frac{m-k}{m+1}}.
\]
 Hence 
\[
\phi(k)^{\frac{1}{k+1}}\ge\phi(m)^{\frac{1}{m+1}},
\]
 which is what we wanted. 
\end{proof}

We are ready to prove the Alexandrov inequalities for geometric, log-concave
functions:
\begin{thm}
\label{thm:lc-alexandrov}Define $g(x)=e^{-\left|x\right|}$. For
every $f\in\lco$ and every integers $0\le k<m<n$, we have 
\[
\left(\frac{W_{k}(f)}{W_{k}(g)}\right)^{\frac{1}{n-k}}\le\left(\frac{W_{m}(f)}{W_{m}(g)}\right)^{\frac{1}{n-m}},
\]
 with equality if and only if $f(x)=e^{-c\left|x\right|}$ for some
$c>0$. \end{thm}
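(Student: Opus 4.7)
The plan is to reduce the inequality to the one-dimensional moment estimate of Proposition \ref{prop:moments} by first symmetrizing $f$ to a rotation-invariant function, and then expressing the resulting quermassintegrals as moments of its radial profile.

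First, I would set $\tilde f := f^{W_k}$. By Proposition \ref{prop:fs-log-concave}, $\tilde f$ is log-concave; it is rotation-invariant by construction, and it is geometric since for every $0 < t \le 1$ the set $\KU_t(f)$ contains the origin, so $\KU_t(f)^{W_k}$ is a Euclidean ball containing $0$, forcing $\tilde f(0) = 1 = \max \tilde f$. By the defining property of the rearrangement, $W_k(\KU_t(f)^{W_k}) = W_k(\KU_t(f))$, which integrated in $t$ gives $W_k(\tilde f) = W_k(f)$. By Proposition \ref{prop:Alexandrov}, $f^{W_m} \ge \tilde f$ for $k < m$, and since $W_m$ is monotone in each of its arguments,
\[ W_m(\tilde f) \le W_m(f^{W_m}) = W_m(f). \]
These two bounds show that it suffices to prove the desired inequality with $\tilde f$ in place of $f$.

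Next, write $\tilde f(x) = h(|x|)$ where $h:[0,\infty) \to [0,1]$ is decreasing, log-concave, and $h(0) = 1$. Since $\KU_t(\tilde f) = h^{-1}(t)\cdot D$, the change of variable $t = h(u)$ followed by a standard integration by parts (whose boundary terms vanish by the exponential tail decay of integrable log-concave functions) yields
\[ W_j(\tilde f) = \vol(D) \int_0^1 h^{-1}(t)^{n-j}\, dt = (n-j)\,\vol(D) \int_0^\infty u^{n-j-1} h(u)\, du. \]
For $g(x) = e^{-|x|}$ this specializes to $W_j(g) = \vol(D)\cdot \Gamma(n-j+1)$, so writing $I_r := \int_0^\infty u^r h(u)\, du$ and setting $p := n-k-1 > q := n-m-1 \ge 0$, the inequality to be proved collapses to
\[ \left(\frac{I_p}{\Gamma(p+1)}\right)^{1/(p+1)} \le \left(\frac{I_q}{\Gamma(q+1)}\right)^{1/(q+1)}, \]
which is exactly Proposition \ref{prop:moments} applied to $h$. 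The sufficiency direction of the equality claim ($f(x) = e^{-c|x|}$ gives equality) is a direct computation.

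For the characterization of equality, the equality case of Proposition \ref{prop:moments} forces $h(u) = e^{-cu}$, hence $\tilde f(x) = e^{-c|x|}$. The equality $W_m(\tilde f) = W_m(f)$ then forces $W_m(\KU_t(f)^{W_k}) = W_m(\KU_t(f))$ for almost every $t$, and by the equality case of the classical Alexandrov inequality each $\KU_t(f)$ must itself be a Euclidean ball. I expect this last step to be the main obstacle: one still has to deduce that the nested balls $\{\KU_t(f)\}_{t\in(0,1]}$ share a common center (necessarily $0$, since $f(0) = \max f$). This requires exploiting the Brunn-Minkowski-type condition on the level sets of a log-concave function to rule out any drift of the ball centers with $t$. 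Once this is handled, $\tilde f = f$ and equality forces $f(x) = e^{-c|x|}$ as claimed.
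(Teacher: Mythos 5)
The main inequality is proved exactly as in the paper: reduce to $\tilde f = f^{W_k}$ via Propositions \ref{prop:fs-log-concave} and \ref{prop:Alexandrov}, write $\tilde f(x) = h(|x|)$, express $W_j(\tilde f) = (n-j)\vol(D)\int_0^\infty u^{n-j-1}h(u)\,du$, and apply Proposition \ref{prop:moments}. Your route to the $W_j$ formula (change of variable plus integration by parts) differs cosmetically from the paper's (expanding $\int f_\epsilon$ in polar coordinates and reading off the coefficients of the Minkowski polynomial), but the content is the same.

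Your instinct about the equality case is better than you give it credit for: the ``main obstacle'' you flag is not merely an obstacle you failed to overcome, it is a genuine gap, and it is present in the paper as well. The paper closes the equality case by invoking the equality clause of Proposition \ref{prop:Alexandrov}(ii), namely that $f^{W_j}=f^{W_i}$ forces $f$ to be rotation invariant -- but the argument given there only shows that every level set $\KU_t(f)$ is a Euclidean ball, and says nothing about the centers, which is exactly the step you identified. Moreover, this step is not salvageable by log-concavity. Let $K$ be the Euclidean ball of radius $1/c$ centered at a point $a$ with $0<|a|<1/c$, and set $f(x)=e^{-\left\Vert x\right\Vert _{K}}$, where $\left\Vert \cdot\right\Vert _{K}$ is the Minkowski gauge of $K$. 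Then $f\in\lco$, every level set $\KU_t(f)=\left(\log\frac{1}{t}\right)\cdot K$ is a Euclidean ball of radius $\frac{1}{c}\log\frac{1}{t}$ whose center drifts away from the origin as $t\to 0$, and by translation invariance and homogeneity of mixed volumes one gets $W_j(f)=c^{-(n-j)}W_j(g)$ for every $j$, so equality holds throughout -- yet $f(x)\ne e^{-c|x|}$ when $a\neq0$. The correct characterization is that equality holds if and only if $f(x)=e^{-\left\Vert x\right\Vert _{K}}$ for some Euclidean ball $K$ containing $0$ in its interior; both the theorem as stated and Proposition \ref{prop:Alexandrov}(ii) need this correction, and you were right not to take the ``common center'' deduction for granted.
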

\begin{proof}
By Proposition \ref{prop:Alexandrov}, $W_{m}(f^{W_{k}})\le W_{m}(f^{W_{m}})=W_{m}(f)$,
while $W_{k}(f^{W_{k}})=W_{k}(f)$. By Proposition \ref{prop:fs-log-concave},
$f^{W_{k}}\in\lco$ as well. Therefore we may replace $f$ by $f^{W_{k}}$
and assume without loss of generality that $f$ is rotation invariant. 

Write $f(x)=h\left(\left|x\right|\right),$ where $h:[0,\infty)\to[0,1]$
is a geometric, log-concave function. Let us express $W_{k}(f)$ and
$W_{m}(f)$ in terms of $h$. For every $\epsilon>0$ we have 
\[
f_{\epsilon}(x)=\begin{cases}
1 & \left|x\right|\le\epsilon\\
f\left(x-\epsilon\frac{x}{\left|x\right|}\right) & \left|x\right|>\epsilon
\end{cases}=\begin{cases}
1 & \left|x\right|\le\epsilon\\
h\left(\left|x\right|-\epsilon\right) & \left|x\right|>\epsilon.
\end{cases}
\]
Integrating using polar coordinates, we get 
\begin{eqnarray*}
\int f_{\epsilon} & = & n\omega_{n}\left[\int_{0}^{\epsilon}1\cdot r^{n-1}dr+\int_{\epsilon}^{\infty}h(r-\epsilon)r^{n-1}dr\right]\\
 & = & n\omega_{n}\left[\frac{\epsilon^{n}}{n}+\int_{0}^{\infty}h(r)(r+\epsilon)^{n-1}dr\right]\\
 & = & \omega_{n}\epsilon^{n}+n\omega_{n}\cdot\sum_{i=0}^{n-1}\int_{0}^{\infty}h(r)\cdot\binom{n-1}{i}r^{n-i-1}dr\cdot\epsilon^{i},
\end{eqnarray*}
 where $\omega_{n}=\vol\left(D\right)$ is the volume of the unit
ball. Comparing this with the definition of the $W_{i}$'s as 
\[
\int f_{\epsilon}=\sum_{i=0}^{n}\binom{n}{i}W_{i}(f)\epsilon^{i},
\]
 we see that for every $0\le i<n$ we have 
\[
W_{i}(f)=\frac{n\omega_{n}\binom{n-1}{i}\cdot\int_{0}^{\infty}h(r)\cdot r^{n-i-1}dr}{\binom{n}{i}}=\left(n-i\right)\omega_{n}\int_{0}^{\infty}h(r)\cdot r^{n-i-1}dr.
\]

Now we use Proposition \ref{prop:moments} with $k$ and $m$ replaced
with $n-m-1$ and $n-k-1$. We get
\[
\left[\frac{1}{\Gamma(n-k)}\int_{0}^{\infty}r^{n-k-1}h(r)dr\right]^{\frac{1}{n-k}}\le\left[\frac{1}{\Gamma(n-m)}\int_{0}^{\infty}r^{n-m-1}h(r)dr\right]^{\frac{1}{n-m}}
\]
 or 
\[
\left[\frac{W_{k}(f)}{\left(n-k\right)\omega_{n}\Gamma(n-k)}\right]^{\frac{1}{n-k}}\le\left[\frac{W_{m}(f)}{(n-m)\omega_{n}\Gamma(n-m)}\right]^{\frac{1}{n-m}}.
\]
For the function $g(x)=e^{-\left|x\right|}$ we know we have an equality
in Proposition \ref{prop:moments} , so 
\[
\left[\frac{W_{k}(g)}{\left(n-k\right)\omega_{n}\Gamma(n-k)}\right]^{\frac{1}{n-k}}=\left[\frac{W_{m}(g)}{(n-m)\omega_{n}\Gamma(n-m)}\right]^{\frac{1}{n-m}}.
\]
Dividing the equations, we get 
\[
\left(\frac{W_{k}(f)}{W_{k}(g)}\right)^{\frac{1}{n-k}}\le\left(\frac{W_{m}(f)}{W_{m}(g)}\right)^{\frac{1}{n-m}}.
\]

Finally, by the equality cases of Propositions \ref{prop:Alexandrov}
and \ref{prop:moments}, we get an equality if and only if $f$ is
rotation invariant and $h(r)=e^{-cr}$, which means that $f(x)=e^{-c\left|x\right|}$.
\end{proof}
In the case $k=0$, $m=1$, we immediately obtain a sharp isoperimetric
inequality (remember that, by definition, $S(f)=n\cdot W_{1}(f)$
):
\begin{prop}
\label{prop:lc-isoperimetric}For every $f\in\lco$ we have
\[
S(f)\ge\left(\int f\right)^{\frac{n-1}{n}}\cdot\frac{S(g)}{\left(\int g\right)^{\frac{n-1}{n}}}
\]
 with equality if and only if $f(x)=e^{-c\left|x\right|}$ for some
$c>0$. \end{prop}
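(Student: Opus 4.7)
The plan is to derive this as the special case $k=0$, $m=1$ of Theorem \ref{thm:lc-alexandrov}, which has already been proven. The only work required is to identify the two quermassintegrals $W_0$ and $W_1$ with quantities appearing in the target inequality, and then to rearrange the resulting inequality into the stated form.

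First I would observe that $W_0(f) = V(f,f,\ldots,f) = \int f$ directly from Definition \ref{def:mixed-integrals}, since all arguments coincide with $f$. Next, by the definition $S(f) = n \cdot W_1(f)$ given after Proposition \ref{prop:qc-isoperimetric}, we have $W_1(f) = S(f)/n$, and analogously $W_1(g) = S(g)/n$. Substituting these identifications into the conclusion of Theorem \ref{thm:lc-alexandrov} with $k=0$ and $m=1$ yields
\[
\left(\frac{\int f}{\int g}\right)^{\frac{1}{n}} \le \left(\frac{S(f)/n}{S(g)/n}\right)^{\frac{1}{n-1}} = \left(\frac{S(f)}{S(g)}\right)^{\frac{1}{n-1}}.
\]

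Raising both sides to the power $n-1$ and multiplying through by $S(g)$ gives exactly
\[
S(f) \ge \left(\int f\right)^{\frac{n-1}{n}} \cdot \frac{S(g)}{\left(\int g\right)^{\frac{n-1}{n}}},
\]
as required. The equality characterization transfers verbatim from Theorem \ref{thm:lc-alexandrov}: equality holds if and only if $f(x) = e^{-c|x|}$ for some $c > 0$.

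There is no real obstacle here; the only thing to be careful about is making sure that the integers $k=0$ and $m=1$ indeed satisfy the hypothesis $0 \le k < m < n$ of Theorem \ref{thm:lc-alexandrov}, which requires $n \ge 2$. The one-dimensional case $n=1$ would have to be checked (or dismissed) separately, but in dimension one the Euclidean ball $D$ is just an interval and both sides are easily computed by hand; alternatively, the statement is usually understood for $n \ge 2$ since the notion of surface area is degenerate otherwise.
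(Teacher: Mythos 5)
Your proof is correct and takes exactly the paper's approach: the paper itself presents Proposition \ref{prop:lc-isoperimetric} as an immediate consequence of Theorem \ref{thm:lc-alexandrov} with $k=0$, $m=1$, and your calculation simply spells out the algebraic rearrangement that the paper leaves implicit. Your remark about the $n=1$ edge case is a reasonable caveat but not a genuine gap, since the statement is understood for $n\ge 2$.
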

\begin{rem}
\label{rem:counter-isoperimetric}In Proposition \ref{prop:lc-isoperimetric}
we made two assumptions about $f$: it must be log-concave, and it
must be geometric. Both assumptions are absolutely crucial, as we
will now see.

Define $f:\R^{2}\to[0,\infty)$ by $f(x)=a^{2}e^{-a\left|x\right|}.$
The function $f$ is log-concave, but not geometric unless $a=1$.
Strictly speaking, we only defined the quermassintegrals for geometric
functions, but from the proof of Theorem \ref{thm:lc-alexandrov}
we immediately see that $\int f_{\epsilon}$ is a polynomial in $\epsilon$,
and the coefficients are 
\[
\int f=W_{0}(f)=2\pi\cdot\int_{0}^{\infty}a^{2}e^{-ar}\cdot rdr=2\pi,
\]
 while 
\[
S(f)=2W_{1}(f)=2\pi\cdot\int_{0}^{\infty}a^{2}e^{-ar}=2\pi a.
\]
 By taking $a\to0$ we see that it is indeed impossible to get any
lower bound on $S(f)$ in terms of $\int f$. 

Similarly, for $a>2$ define $f:\R^{2}\to[0,\infty)$ by $f(x)=\left(1+\frac{\left|x\right|}{\sqrt{a^{2}-3a+2}}\right)^{-a}$.
The function $f$ is geometric and quasi-concave, but not log-concave.
Again using the same formulas we get
\begin{eqnarray*}
\int f & = & 2\pi\cdot\int_{0}^{\infty}r\left(1+\frac{r}{\sqrt{a^{2}-3a+2}}\right)^{-a}dr=2\pi\\
S(f) & = & 2\pi\cdot\int_{0}^{\infty}\left(1+\frac{r}{\sqrt{a^{2}-3a+2}}\right)^{-a}dr=2\pi\cdot\sqrt{\frac{a-2}{a-1}}.
\end{eqnarray*}
 Taking $a\to2^{+}$, we see that it is again impossible to bound
$S(f)$ from below using $\int f$. 
\end{rem}

\begin{rem}
We stated Theorem \ref{thm:lc-alexandrov} and Proposition \ref{prop:lc-isoperimetric}
for log-concave functions, but similar results can also be stated
for $\alpha$-concave functions, for every non-positive value of $\alpha$
(see \cite{brascamp_extensions_1976} for definitions) . In the class
of $\alpha$-concave functions, the extremal function will not be
$g(x)=e^{-\left|x\right|}$, but $g(x)=\left(1-\alpha\left|x\right|\right)^{1/\alpha}$.
Since we have not discussed $\alpha$-concave functions in this paper,
and since the generalized proofs are almost identical to the ones
we gave, we will not pursue this point any further. 
\end{rem}

\section{\label{sec:Rescaling}Rescalings and dilations}

In this last section, we will explore the notion of rescaling, discussed
in the introduction. We formally define: 
\begin{defn}
A rescaling of a function $f\in\qco$ is a function of the form $\alpha\circ f$,
where $\alpha:[0,1]\to[0,1]$ is an increasing bijection.
\end{defn}
It is easy to see that if $\tilde{f}=\alpha\circ f$ is a rescaling
of $f$, then 
\[
\KU_{t}(\tilde{f})=\KU_{\alpha^{-1}(t)}(f).
\]
Rescaling will be especially effective if the function $f$ satisfies
certain regularity assumptions. For concreteness, let us define:
\begin{defn}
A function $f\in\qco$ is called regular if 
\begin{enumerate}
\item $f$ is continuous.
\item $f(\lambda x)>f(x)$ for all $x\in\R^{n}$ and $0<\lambda<1$. 
\item $f(x)\to0$ as $\left|x\right|\to\infty$.
\end{enumerate}
\end{defn}
We will need the following technical lemma:
\begin{lem}
\label{lem:technical}Let $\Phi$ be a size functional, and let $f\in\qco$
be regular. Then the map $\phi_{f}:[0,1]\to[0,\infty]$ defined by
\[
\phi_{f}(t)=\Phi\left(\KU_{t}(f)\right)
\]
 is a decreasing bijection.\end{lem}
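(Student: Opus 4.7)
The plan is to analyze $\phi_f$ at the two endpoints, establish strict monotonicity on $(0,1)$, and verify continuity; bijectivity of $\phi_f:[0,1]\to[0,\infty]$ onto its codomain will then follow.

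First I would pin down the endpoint behavior. Since $f\in\qco$ is regular, $f(0)=1$. The strict ray-monotonicity $f(\lambda x)>f(x)$ for $0<\lambda<1$ combined with $f\ge 0$ forces $f>0$ everywhere: a zero at some $x_0\ne 0$ would give $f(\mu x_0)<0$ for any $\mu>1$ by applying the inequality with $\lambda=1/\mu$ and $x=\mu x_0$. Therefore $\KU_1(f)=\{0\}$ and $\bigcup_{t>0}\KU_t(f)=\R^n$. Writing $m$ for the degree of $\Phi$, the $m$-homogeneity and monotonicity of $\Phi$ give $\phi_f(1)=\Phi(\{0\})=0$. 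On the other side, for any $R>0$, continuity of $f$ on the compact ball $RD$ yields $\min_{RD}f>0$, so $RD\subseteq\KU_t(f)$ once $t$ is small enough; since $\Phi(RD)=R^m\Phi(D)$ and $R$ is arbitrary, $\phi_f(t)\to\infty=\phi_f(0)$ as $t\to 0^+$.

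Next I would prove strict decrease on $(0,1)$. Given $s<t$, it suffices to produce $\lambda>1$ with $\lambda\KU_t(f)\subseteq\KU_s(f)$; monotonicity and $m$-homogeneity of $\Phi$ then give $\phi_f(s)\ge\lambda^m\phi_f(t)>\phi_f(t)$, where $\phi_f(t)>0$ because $\KU_t(f)$ contains a neighborhood of the origin ($f(0)=1>t$, $f$ continuous). To construct $\lambda$, set $g(\lambda)=\min_{x\in\KU_t(f)}f(\lambda x)$; this is well-defined since $\KU_t(f)$ is closed and bounded (the latter from $f\to 0$ at infinity), and continuous in $\lambda$ by joint continuity of $(\lambda,x)\mapsto f(\lambda x)$ on a compact set. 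Since $g(1)=t>s$, there is $\lambda_0>1$ with $g(\lambda_0)\ge s$, which is the desired inclusion.

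The main technical step is continuity of $\phi_f$, which reduces to Hausdorff-continuity of $t\mapsto\KU_t(f)$ on $(0,1)$, since mixed volumes are continuous on convex bodies in the Hausdorff metric. The one-sided inclusion $\KU_{t_n}(f)\subseteq\KU_t(f)+\epsilon D$ for $t_n\to t$ follows by a standard compactness argument: any putative escaping sequence $x_n\in\KU_{t_n}(f)$ stays in a fixed compact set (where $f\ge t/2$, say), so it has a subsequential limit $x_\infty$ with $f(x_\infty)\ge t$ by continuity, contradicting a uniform distance bound from $\KU_t(f)$. The opposite inclusion $\KU_t(f)\subseteq\KU_{t_n}(f)+\epsilon D$ as $t_n\searrow t$ is the main obstacle: a boundary point $x$ with $f(x)=t$ does not itself lie in any smaller level set, but $\lambda x$ for $\lambda<1$ close to $1$ lies strictly deeper by ray-monotonicity; applying the very same Berge-type argument as in the previous paragraph to $g(\lambda)=\min_{x\in\KU_t(f)}f(\lambda x)$, but now for $\lambda<1$, produces a uniform $\lambda$ with $\lambda\KU_t(f)\subseteq\KU_{t_n}(f)$ for all large $n$, forcing Hausdorff convergence. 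Putting everything together, $\phi_f$ is a continuous strictly decreasing map from $[0,1]$ onto $[0,\infty]$, hence a bijection.
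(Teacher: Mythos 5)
Your proof is correct and reaches the same conclusion, but the two key steps are argued differently from the paper. For strict monotonicity, the paper uses a clean $\epsilon$-separation argument: $\KU_t(f)$ is compact and disjoint from the closed set $\{f\le s\}$, so $\KU_t(f)+\epsilon D\subseteq\KU_s(f)$ for some $\epsilon>0$, and expanding $\Phi(\KU_t(f)+\epsilon D)$ as a polynomial in $\epsilon$ shows $\phi_f(s)>\phi_f(t)$. You instead exploit the ray-monotonicity $f(\lambda x)>f(x)$ directly, producing a dilation $\lambda>1$ with $\lambda\KU_t(f)\subseteq\KU_s(f)$ and using the $m$-homogeneity of $\Phi$; both are sound, and yours uses the regularity hypothesis more intrinsically (though you should note in passing that $\min_{\KU_t(f)}f=t$, which requires that $\partial\KU_t(f)$ is nonempty — true since $\KU_t(f)$ is a nonempty proper compact subset of the connected space $\R^n$). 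For continuity, the paper isolates the topological identity $\{f>t\}=\operatorname{int}\KU_t(f)$ (proved again via ray-monotonicity) and then invokes continuity of mixed volumes along the monotone families $\bigcap_{s<t}\KU_s(f)=\KU_t(f)$ and $\bigcup_{s>t}\KU_s(f)=\operatorname{int}\KU_t(f)$; you establish Hausdorff continuity of $t\mapsto\KU_t(f)$ directly via a compactness argument in one direction and a uniform radial shrinking in the other. These are essentially equivalent — your radial shrinking step is the Hausdorff-metric incarnation of the paper's interior identity — but the paper's formulation is a bit cleaner since it outsources the convergence to a single standard fact about nested convex bodies. One small presentational point: when you deduce $\KU_t(f)\subseteq\KU_{t_n}(f)+\epsilon D$ from $\lambda\KU_t(f)\subseteq\KU_{t_n}(f)$, you implicitly need $\lambda^{-1}K\subseteq K+(\lambda^{-1}-1)R_0D$ for $K\subseteq R_0D$, which is fine but worth making explicit. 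Overall your proof buys nothing extra, and the paper's buys a slightly shorter and more modular argument, but both are valid.
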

\begin{proof}
First we notice that for every $0<t\le1$, the set $\KU_{t}(f)$ is
compact: it is closed because $f$ is continuous, and bounded because
$f(x)\to0$ as $\left|x\right|\to\infty$. Also, for every $0\le t<1$,
the set $\KU_{t}(f)$ has non-empty interior, because it contains
an $\epsilon$-neighborhood of $0$. 

Now let us show that $\phi_{f}$ is strictly decreasing. Fix $0<s<t<1$.
Then $\KU_{t}(f)$ is compact, $\left\{ x:\ f(x)\le s\right\} $ is
closed and these two sets are disjoint. It follows that they are $\epsilon$-separated
for some $\epsilon>0$, i.e. 
\[
\left[\KU_{t}(f)+\epsilon B_{2}^{n}\right]\cap\left\{ x:\ f(x)\le s\right\} =\emptyset.
\]
 This implies that 
\[
\KU_{t}(f)+\epsilon B_{2}^{n}\subseteq\left\{ x:\ f(x)>s\right\} \subseteq\KU_{s}(f),
\]
 so $\phi_{f}(t)=\Phi\left(\KU_{t}(f)\right)<\Phi\left(\KU_{s}(f)\right)=\phi_{f}(s)$
like we wanted. 

We still need to check the end points of $[0,1]$. For $t=0$ we get
\[
\phi_{f}(0)=\Phi\left(\KU_{0}(f)\right)=\Phi\left(\R^{n}\right)=\infty,
\]
 but if $t>0$ then $\phi_{f}(t)<\infty$ because $\KU_{t}(f)$ is
compact. Similarly, from the definition of regularity we see that
$f(x)<f(0)=1$ for all $x\ne0$, so $\KU_{1}(f)=\left\{ 0\right\} $.
Hence we get 
\[
\phi_{f}(1)=\Phi\left(\KU_{1}(f)\right)=\Phi\left(\left\{ 0\right\} \right)=0,
\]
 but if $t<1$ then $\phi_{f}(t)>0$ since $\KU_{t}(f)$ has non-empty
interior. This completes the proof that $\phi_{f}$ is strictly decreasing,
hence injective.

Now we wish to prove that $\phi_{f}$ is continuous. To do so we will
need the observation that for every $0\le t\le1$
\[
\left\{ x:\ f(x)>t\right\} =\text{int}\left[\KU_{t}(f)\right],
\]
 where $\text{int}$ denotes the topological interior. Indeed, the
inclusion $\subseteq$ is obvious since the set $\left\{ x:\ f(x)>t\right\} $
is open. For the other inclusion, assume $x\in\text{int}\left[\KU_{t}(f)\right]$,
then $(1+\epsilon)x\in\KU_{t}(f)$ for small enough $\epsilon>0$.
This implies that 
\[
f(x)=f\left(\frac{1}{1+\epsilon}\cdot\left(1+\epsilon\right)x\right)>f\left((1+\epsilon)x\right)\ge t,
\]
 so we proved the claim.

Now continuity follows easily: from the left we have 
\[
\bigcap_{s<t}\KU_{s}(f)=\KU_{t}(f),
\]
 so by continuity of classic mixed volumes we get 
\[
\lim_{s\to t^{-}}\phi_{f}(t)=\lim_{s\to t^{-}}\Phi\left(\KU_{s}(f)\right)=\Phi\left(\KU_{t}(f)\right)=\phi_{f}(t).
\]
 Similarly, from the right, we get 
\[
\bigcup_{s>t}\KU_{s}(f)=\left\{ x:\ f(x)>t\right\} =\text{int}\left[\KU_{t}(f)\right],
\]
 and again by continuity of mixed volumes we get $\lim_{s\to t^{+}}\phi_{f}(s)=\phi_{f}(t)$. 

Since $\phi_{f}$ is continuous the image $\phi_{f}\left([0,1]\right)$
is connected, and since we already saw that $0,\infty\in\phi_{f}\left([0,1]\right)$
it follows that $\phi_{f}$ is onto. Hence our proof is complete.
\end{proof}
Using the lemma, we can achieve the goals promised in the introduction.
Specifically, we prove the following generalized Brunn-Minkowski inequality:
\begin{prop}
Assume $f,g\in\qco$ are regular, and fix a size functional $\Phi$
of degree $k$. Then one can rescale $f$ to a function $\tilde{f}$
in such a way that 
\[
\Phi(\widetilde{f}\oplus g)^{\frac{1}{k}}\ge\Phi(\widetilde{f})^{\frac{1}{k}}+\Phi(g)^{\frac{1}{k}}.
\]
\end{prop}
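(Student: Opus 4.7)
The plan is to rescale $f$ so that at every height $t$ the $\Phi$-size of its upper level set is proportional to the $\Phi$-size of the corresponding upper level set of $g$. Once such proportionality holds, the classical Brunn--Minkowski inequality for $\Phi$ applied pointwise in $t$ (namely $\Phi(A+B)^{1/k} \geq \Phi(A)^{1/k} + \Phi(B)^{1/k}$, which is the numerical content of Theorem \ref{prop:gen-BM}(i)), combined with the level-set representations $\Phi(h) = \int_0^1 \Phi(\KU_t(h))\,dt$ and $\KU_t(\widetilde f \oplus g) = \KU_t(\widetilde f) + \KU_t(g)$ from Proposition \ref{prop:sum-by-levels}, will yield the functional inequality after integration in $t$.

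To construct the rescaling, invoke Lemma \ref{lem:technical} for $\Phi$: the maps
\[
\phi_f(s) := \Phi(\KU_s(f)), \qquad \phi_g(s) := \Phi(\KU_s(g))
\]
are continuous, strictly decreasing bijections from $[0,1]$ onto $[0,\infty]$. Fix any constant $c > 0$ and set
\[
\beta(s) := \phi_g^{-1}\!\left(\tfrac{\phi_f(s)}{c}\right).
\]
As a composition of two decreasing bijections sending $0 \mapsto 0$ and $1 \mapsto 1$, this $\beta$ is a continuous increasing bijection of $[0,1]$, so $\widetilde f := \beta \circ f$ is a legitimate rescaling of $f$. Moreover, from $\KU_t(\widetilde f) = \KU_{\beta^{-1}(t)}(f)$ one reads off $\Phi(\KU_t(\widetilde f)) = c\,\Phi(\KU_t(g))$ for every $t \in (0,1)$.

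The conclusion is now a short computation. Applying the classical BM inequality for $\Phi$ at each level gives
\[
\Phi\bigl(\KU_t(\widetilde f) + \KU_t(g)\bigr)^{1/k} \geq \Phi(\KU_t(\widetilde f))^{1/k} + \Phi(\KU_t(g))^{1/k} = (c^{1/k} + 1)\,\Phi(\KU_t(g))^{1/k}.
\]
Raising to the $k$-th power and integrating over $t \in [0,1]$ yields $\Phi(\widetilde f \oplus g) \geq (c^{1/k}+1)^k\,\Phi(g)$, while the proportionality yields $\Phi(\widetilde f) = c\,\Phi(g)$. Taking $k$-th roots gives the desired inequality $\Phi(\widetilde f \oplus g)^{1/k} \geq \Phi(\widetilde f)^{1/k} + \Phi(g)^{1/k}$.

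The main obstacle is the construction of $\beta$: one needs it to be a genuine increasing bijection of $[0,1]$, which requires both $\phi_f$ and $\phi_g$ to be surjective \emph{and} continuous bijections onto $[0,\infty]$. This is exactly the role of the regularity hypothesis---without continuity of $f$ we might lose continuity of the $\phi$'s, without the strict monotonicity condition $f(\lambda x) > f(x)$ the $\phi$'s could fail to be injective, and without $f(x) \to 0$ at infinity the map $\phi_f$ might not reach $\infty$, so that no single global rescaling could align all levels proportionally. Lemma \ref{lem:technical} is precisely the tool that rules these pathologies out.
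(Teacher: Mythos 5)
Your proposal is correct and follows essentially the same route as the paper's proof: use Lemma \ref{lem:technical} to build a rescaling $\widetilde f$ whose level sets have $\Phi$-size proportional to those of $g$, then apply the level-set form of the generalized Brunn--Minkowski inequality and integrate. The only cosmetic differences are that you carry an arbitrary proportionality constant $c>0$ (the paper takes $c=1$ and notes the extra degree of freedom in a remark afterward) and that you unpack Theorem \ref{prop:gen-BM}(ii) to the pointwise-in-$t$ level rather than citing the rearrangement inequality $\left(\widetilde f\oplus g\right)^{\Phi}\ge \widetilde f^{\Phi}\oplus g^{\Phi}$ directly.
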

\begin{proof}
Using the notation of Lemma \ref{lem:technical}, define $\alpha:[0,1]\to[0,1]$
by $\alpha=\phi_{g}^{-1}\circ\phi_{f}$. By the lemma $\alpha$ is
an increasing bijection, so $\widetilde{f}=\alpha\circ f$ is a rescaling
of $f$. By direct calculation 
\begin{eqnarray*}
\Phi\left(\KU_{t}(\widetilde{f})\right) & = & \Phi\left(\KU_{\alpha^{-1}(t)}(f)\right)=\phi_{f}\left(\alpha^{-1}(t)\right)=\left[\phi_{f}\circ\phi_{f}^{-1}\circ\phi_{g}\right](t)\\
 & = & \phi_{g}(t)=\Phi\left(\KU_{t}(g)\right),
\end{eqnarray*}
 so $\widetilde{f}^{\Phi}=g^{\Phi}$. By Proposition \ref{prop:gen-BM}
we get 
\begin{eqnarray*}
\Phi(\widetilde{f}\oplus g)^{\frac{1}{k}} & \ge & \Phi(\widetilde{f}^{\Phi}\oplus g^{\Phi})^{\frac{1}{k}}=\Phi(2\odot g^{\Phi})^{\frac{1}{k}}=2\Phi(g^{\Phi})^{\frac{1}{k}}\\
 & = & \Phi(g^{\Phi})^{\frac{1}{k}}+\Phi(g^{\Phi})^{\frac{1}{k}}=\Phi(\widetilde{f}^{\Phi})^{\frac{1}{k}}+\Phi(g^{\Phi})^{\frac{1}{k}}\\
 & = & \Phi(\widetilde{f})^{\frac{1}{k}}+\Phi(g)^{\frac{1}{k}}.
\end{eqnarray*}

\end{proof}
Notice that we only needed to rescale one of the functions (in this
case $f$), but the exact rescaling depended on $g$. The same result
can be obtained by rescaling both $f$ and $g$, but in a universal
way -- the rescaling of $f$ will depend on $\Phi$ but not on $g$,
and vice versa. This is not hard to see -- just choose a fixed, ``universal'',
regular quasi-concave function $h$, and use the same technique we
used in the proof to rescale both $f$ and $g$ in such a way that
$\Phi\left(\KU_{t}(\widetilde{f})\right)=\Phi\left(\KU_{t}(\widetilde{g})\right)=\Phi\left(\KU_{t}(h)\right).$

As a second remark, note that we have an extra degree of freedom which
we have not used. We chose our rescaling $\alpha$ in such a way that
$\widetilde{f}^{\Phi}=g^{\Phi}$, but for any $c>0$ we could have
chosen $\alpha$ to satisfy that $\widetilde{f}^{\Phi}=c\odot g^{\Phi}$,
and the proof would have worked in exactly the same way. Using this
degree of freedom we may for example choose $\widetilde{f}$ to satisfy
$\Phi(\widetilde{f})=\Phi(f)$, or alternatively $\int\widetilde{f}=\int f$. 

In a similar way, one can obtain a version of the Alexandrov-Fenchel
inequality
\begin{prop}
Assume $f_{1},f_{2},\ldots,f_{m}\in\qco$ are regular functions, and
$A_{1},A_{2},\ldots,A_{n-m}\in\kn$ are compact bodies with non-empty
interior. Then one can rescale each $f_{i}$ to a function $\tilde{f}_{i}$
such that 
\[
V(\widetilde{f_{1}},\widetilde{f_{2}},\ldots,\widetilde{f_{m}},\o_{A_{1}},\ldots,\o_{A_{n-m}})^{m}\ge\prod_{i=1}^{m}V(\widetilde{f_{i}},\widetilde{f_{i}},\ldots,\widetilde{f_{i}},\o_{A_{1}},\ldots,\o_{A_{n-m}}).
\]
 \end{prop}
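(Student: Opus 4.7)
The plan is to transcribe the argument used for the rescaled Brunn--Minkowski inequality in the previous proposition, replacing the appeal to Theorem~\ref{prop:gen-BM} by the functional Alexandrov--Fenchel inequality of Theorem~\ref{prop:AF}(ii). First, I would introduce the size functional of degree $m$
\[
\Phi(A)=V(\underbrace{A,\ldots,A}_{m\text{ times}},A_{1},\ldots,A_{n-m}),
\]
which extends to $\qco$ via $\Phi(f)=V(f,\ldots,f,\o_{A_{1}},\ldots,\o_{A_{n-m}})$. In this notation, the desired inequality is simply $V(\widetilde{f_{1}},\ldots,\widetilde{f_{m}},\o_{A_{1}},\ldots,\o_{A_{n-m}})^{m}\ge\prod_{i=1}^{m}\Phi(\widetilde{f_{i}})$.

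Next, I would fix one regular reference function $h\in\qco$ (for instance $h=f_{1}$, or any universal choice independent of the given data). By Lemma~\ref{lem:technical}, each of the maps $\phi_{f_{i}}(t)=\Phi(\KU_{t}(f_{i}))$ and $\phi_{h}(t)=\Phi(\KU_{t}(h))$ is a decreasing bijection from $[0,1]$ onto $[0,\infty]$. Setting $\alpha_{i}=\phi_{h}^{-1}\circ\phi_{f_{i}}$, an increasing bijection of $[0,1]$, define the rescaling $\widetilde{f_{i}}=\alpha_{i}\circ f_{i}$. Exactly the same computation as in the proof of the previous proposition shows
\[
\Phi(\KU_{t}(\widetilde{f_{i}}))=\phi_{f_{i}}(\alpha_{i}^{-1}(t))=\phi_{h}(t),
\]
which is independent of $i$; hence each rearrangement $\widetilde{f_{i}}^{\Phi}$ coincides with $h^{\Phi}$.

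With all the rearrangements equalized, Theorem~\ref{prop:AF}(ii) gives
\[
V(\widetilde{f_{1}},\ldots,\widetilde{f_{m}},\o_{A_{1}},\ldots,\o_{A_{n-m}})\ge V(h^{\Phi},\ldots,h^{\Phi},\o_{A_{1}},\ldots,\o_{A_{n-m}})=\Phi(h^{\Phi}).
\]
Combined with the identity $\Phi(f^{\Phi})=\Phi(f)$ recorded after the definition of $f^{\Phi}$, this delivers $\Phi(\widetilde{f_{i}})=\Phi(\widetilde{f_{i}}^{\Phi})=\Phi(h^{\Phi})$ for every $i$, and therefore $\prod_{i=1}^{m}\Phi(\widetilde{f_{i}})=\Phi(h^{\Phi})^{m}$. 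Raising the previous display to the $m$-th power now yields the claimed inequality.

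I do not anticipate any genuine obstacle: the argument is mechanical once the technical lemma and the functional Alexandrov--Fenchel inequality are in hand. The only thing worth verifying explicitly is that the particular $\Phi$ used here really is a size functional in the sense of the earlier definition, which is immediate because the bodies $A_{1},\ldots,A_{n-m}$ are assumed compact with non-empty interior, and hence Lemma~\ref{lem:technical} applies.
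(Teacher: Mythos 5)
Your proposal matches the paper's proof almost exactly: both define the size functional $\Phi(A)=V(A,\ldots,A,A_{1},\ldots,A_{n-m})$, pick a common regular reference function $h$, rescale each $f_i$ via $\alpha_i=\phi_h^{-1}\circ\phi_{f_i}$ so that $\widetilde{f_i}^\Phi=h^\Phi$ for all $i$, and then invoke the functional Alexandrov--Fenchel inequality. The only difference is cosmetic (you phrase the final step through the identity $\Phi(f^\Phi)=\Phi(f)$ rather than the chain of equalities written out in the paper), so the argument is the same.
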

\begin{proof}
We will use Lemma \ref{lem:technical} again, this time with 
\[
\Phi(K)=V(K,K,\ldots,K,A_{1},A_{2},\ldots,A_{n-m}).
\]
 Fix some regular quasi-concave function $h$, and scale each $f_{i}$
using $\alpha=\phi_{h}^{-1}\circ\phi_{f}$. Like before we will have
$\widetilde{f_{i}}^{\Phi}=h^{\Phi}$ for all $i$. Thus, using Proposition
\ref{prop:AF} we get
\begin{eqnarray*}
V(\widetilde{f_{1}},\widetilde{f_{2}},\ldots,\widetilde{f_{m}},\o_{A_{1}},\ldots,\o_{A_{n-m}})^{m} & \ge & V(\widetilde{f_{1}}^{\Phi},\widetilde{f_{2}}^{\Phi},\ldots,\widetilde{f_{m}}^{\Phi},\o_{A_{1}},\ldots,\o_{A_{n-m}})^{m}\\
 & = & V(\widetilde{h}^{\Phi},\widetilde{h}^{\Phi},\ldots,\widetilde{h}^{\Phi},\o_{A_{1}},\ldots,\o_{A_{n-m}})^{m}\\
 & = & \prod_{i=1}^{m}V(\widetilde{h}^{\Phi},\widetilde{h}^{\Phi},\ldots,\widetilde{h}^{\Phi},\o_{A_{1}},\ldots,\o_{A_{n-m}})\\
 & = & \prod_{i=1}^{m}V(\widetilde{f_{i}}^{\Phi},\widetilde{f_{i}}^{\Phi},\ldots,\widetilde{f_{i}}^{\Phi},\o_{A_{1}},\ldots,\o_{A_{n-m}})\\
 & = & \prod_{i=1}^{m}V(\widetilde{f_{i}},\widetilde{f_{i}},\ldots,\widetilde{f_{i}},\o_{A_{1}},\ldots,\o_{A_{n-m}}).
\end{eqnarray*}

As a corollary, we immediately get\end{proof}
\begin{cor}
Assume $f_{1},f_{2},\ldots,f_{n}\in\qc$ are regular. Then it is possible
to rescale each $f_{i}$ to a function $\tilde{f_{i}}$ in such a
way that
\[
V(\tilde{f}_{1},\tilde{f_{2}},\ldots,\tilde{f_{n}})\ge\left(\prod_{i=1}^{n}\int\tilde{f}_{i}\right)^{\frac{1}{n}}.
\]

\end{cor}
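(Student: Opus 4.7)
The plan is to apply the preceding proposition in the maximal case $m=n$, where there are no convex body arguments at all. In that case the size functional $\Phi(A)=V(A,A,\ldots,A)=\vol(A)$ is of degree $n$, and the construction of the rescalings via Lemma \ref{lem:technical} is available: fix a single ``universal'' regular reference function $h\in\qco$, and for each $i$ define the rescaling $\tilde{f}_{i}=\alpha_{i}\circ f_{i}$ where $\alpha_{i}=\phi_{h}^{-1}\circ\phi_{f_{i}}$ with $\phi_{g}(t)=\vol\left(\KU_{t}(g)\right)$. By Lemma \ref{lem:technical} each $\alpha_{i}$ is an increasing bijection of $[0,1]$, so $\tilde{f}_{i}$ is a genuine rescaling; and by the computation used in the proofs above we get $\tilde{f}_{i}^{\vol}=h^{\vol}$ simultaneously for every $i$.

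With these rescalings the Alexandrov-Fenchel inequality for mixed integrals (Theorem \ref{prop:AF}(ii), specialized to $m=n$) gives
\[
V(\tilde{f}_{1},\tilde{f}_{2},\ldots,\tilde{f}_{n})^{n}\ge\prod_{i=1}^{n}V(\tilde{f}_{i},\tilde{f}_{i},\ldots,\tilde{f}_{i}).
\]
By the polarization property of mixed integrals established in Theorem \ref{thm:polynomial}, the diagonal value satisfies $V(\tilde{f}_{i},\tilde{f}_{i},\ldots,\tilde{f}_{i})=\int\tilde{f}_{i}$ for every $i$. Substituting this identity and taking $n$-th roots yields
\[
V(\tilde{f}_{1},\tilde{f}_{2},\ldots,\tilde{f}_{n})\ge\left(\prod_{i=1}^{n}\int\tilde{f}_{i}\right)^{\frac{1}{n}},
\]
which is exactly the desired inequality.

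I do not expect any real obstacle: this is essentially a direct specialization of the previous proposition, combined with the identity $V(f,\ldots,f)=\int f$ that is built into the definition of mixed integrals. The only detail worth verifying explicitly is that $\vol$ qualifies as a size functional of degree $n$ (which it does by taking no $K_{i}$'s in the definition) and that Lemma \ref{lem:technical} applies to $\Phi=\vol$ on regular functions (which is immediate, since regularity forces $\KU_{t}(f_{i})$ to be compact with nonempty interior for $0<t<1$). Everything else is mechanical.
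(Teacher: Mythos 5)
Your proposal is correct and matches the paper's intent exactly: the corollary is the $m=n$ specialization of the preceding proposition, with $\Phi=\vol$ as the (degree-$n$) size functional and the identity $V(\tilde{f}_{i},\ldots,\tilde{f}_{i})=\int\tilde{f}_{i}$ substituted in. You simply unpacked what the paper labels ``immediately.''
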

The idea of rescaling is simple and powerful, but unfortunately it
does not apply to all functions. For example, if $K$ is a convex
body then the indicator $\o_{K}$ is definitely not regular, so we
cannot use the above propositions. To conclude this paper we will
now describe another procedure, similar to rescaling, which does not
assume regularity. The idea is to take the function $f$, and dilate
each level set $\KU_{t}(f)$ to the required ``size''. In other
words, given some size functional $\Phi$, we want to construct a
function $\tilde{f}$ such that 
\[
\KU_{t}(\tilde{f})=A(t)\cdot\KU_{t}(f),
\]
 and $\Phi\left(\KU_{t}(\tilde{f})\right)=\phi(t)$ for some prescribed
$\phi$. 

The problem is that for general quasi-concave functions $f$ and general
laws $\phi$, such a $\tilde{f}$ may not exist. The following proposition
gives one case where the existence of $\tilde{f}$ is guaranteed:
\begin{prop}
Fix a size functional $\Phi:\kn\to[0,\infty]$ and a geometric log-concave
function $f\in\lco$. Define $M(x)=e^{-\left|x\right|}$. Then it
is possible to construct a function $\tilde{f}$ such that $\KU_{t}(\tilde{f})$
is always homothetic to $\KU_{t}(f)$, and 
\[
\phi_{\widetilde{f}}(t):=\Phi\left(\KU_{t}(\widetilde{f})\right)=\Phi\left(\KU_{t}(M)\right)=\phi_{M}(t)
\]
 for all $t$. The function $\tilde{f}$ will be called a dilation
of $f$. \end{prop}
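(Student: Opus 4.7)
The plan is to define $\widetilde{f}$ directly through its super-level sets. Since $\Phi$ is a size functional of some degree $m$, it is $m$-homogenous: $\Phi(\lambda K)=\lambda^{m}\Phi(K)$. Writing $\KU_t(\widetilde{f})=A(t)\cdot\KU_t(f)$, the condition $\Phi(\KU_t(\widetilde{f}))=\Phi(\KU_t(M))$ forces
\[
A(t)=\left(\frac{\Phi(\KU_t(M))}{\Phi(\KU_t(f))}\right)^{1/m}.
\]
Because $M(x)=e^{-|x|}$, we have $\KU_t(M)=\log(1/t)\cdot D$ for $t\in(0,1]$, so $\Phi(\KU_t(M))=(\log(1/t))^{m}\Phi(D)$. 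I would then attempt to recover $\widetilde{f}$ from the family $C_t:=A(t)\KU_t(f)$ via $\widetilde{f}(x):=\sup\{t\in(0,1]:x\in C_t\}$.

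The main obstacle is showing that the $C_t$'s form a legitimate nested family of super-level sets, i.e.\ that $C_{t'}\subseteq C_{t}$ whenever $t<t'$. This is exactly where log-concavity of $f$ is needed. My plan is to introduce $h(u):=\Phi(\KU_{e^{-u}}(f))^{1/m}$ on $[0,\infty)$ and prove $h$ is concave. Log-concavity of $f$ yields
\[
\lambda\KU_s(f)+(1-\lambda)\KU_t(f)\subseteq\KU_{s^{\lambda}t^{1-\lambda}}(f),
\]
and combining this with monotonicity of $\Phi$ and the generalized Brunn--Minkowski inequality $\Phi(A+B)^{1/m}\ge\Phi(A)^{1/m}+\Phi(B)^{1/m}$ (which is the content of Theorem~\ref{prop:gen-BM}(i), restated in terms of radii) gives
\[
h(\lambda a+(1-\lambda)b)\ge\lambda h(a)+(1-\lambda)h(b)
\]
after the substitution $s=e^{-a}$, $t=e^{-b}$.

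Once $h$ is concave and non-negative on $[0,\infty)$, a standard chord-comparison argument shows $u\mapsto h(u)/u$ is non-increasing on $(0,\infty)$. Since $A(e^{-u})=u\,\Phi(D)^{1/m}/h(u)$, this makes $A(e^{-u})$ non-decreasing in $u$, i.e.\ $A(t)$ is non-increasing in $t$. For $t<t'$ I would then combine $A(t')\le A(t)$ with $\KU_{t'}(f)\subseteq\KU_{t}(f)$ and the fact that each $\KU_{t}(f)$ is convex and contains the origin (so $\mu\KU_{t}(f)\subseteq\KU_{t}(f)$ whenever $\mu\in[0,1]$) to conclude
\[
C_{t'}=A(t')\KU_{t'}(f)\subseteq A(t')\KU_{t}(f)\subseteq A(t)\KU_{t}(f)=C_{t}.
\]

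The residual task is to check that the super-level sets of the $\widetilde{f}$ defined above really are the $C_t$'s; this reduces to the right-continuity identity $\bigcap_{s<t}C_{s}=C_{t}$, which I expect to follow routinely from upper semicontinuity of $f$ (giving $\bigcap_{s<t}\KU_s(f)=\KU_t(f)$), continuity of $A(\cdot)$ away from the boundary of $(0,1]$, and compactness of the level sets of $f$. The conceptual heart of the argument is the concavity of $h$: both log-concavity of $f$ and the generalized Brunn--Minkowski inequality enter there, and once it is in hand the rest of the construction is essentially forced.
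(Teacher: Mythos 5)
Your proof is correct, but the route to the key monotonicity of $A(t)$ differs from the paper's. You introduce $h(u)=\Phi\bigl(\KU_{e^{-u}}(f)\bigr)^{1/m}$, prove it is concave by combining the level-set characterization of log-concavity, $\lambda\KU_s(f)+(1-\lambda)\KU_t(f)\subseteq\KU_{s^\lambda t^{1-\lambda}}(f)$, with the monotonicity of $\Phi$ and the generalized Brunn--Minkowski inequality $\Phi(A+B)^{1/m}\ge\Phi(A)^{1/m}+\Phi(B)^{1/m}$, and then extract the monotonicity of $h(u)/u$ by a chord comparison. The paper is more frugal: it never invokes Brunn--Minkowski here. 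It instead observes that for $0<t\le s\le 1$ and $\lambda=\log(1/s)/\log(1/t)\in(0,1]$, the purely radial consequence of log-concavity, $f(\lambda x)\ge f(x)^{\lambda}f(0)^{1-\lambda}\ge t^{\lambda}=s$ for $x\in\KU_t(f)$, already gives $\lambda\,\KU_t(f)\subseteq\KU_s(f)$, and then the $m$-homogeneity of $\Phi$ alone yields $\Phi(\KU_s(f))\ge\lambda^m\Phi(\KU_t(f))$, hence the needed inequality $\Phi(\KU_t(M))/\Phi(\KU_t(f))\ge\Phi(\KU_s(M))/\Phi(\KU_s(f))$. Your argument is conceptually tidier and actually proves something slightly stronger (concavity of $h$, not merely monotonicity of $h(u)/u$), but it pays for this by invoking the generalized Brunn--Minkowski theorem, which the paper avoids at this point; the paper's proof only needs the one-dimensional log-concavity of $f$ along rays through the origin, which is why the hypothesis that $f$ is \emph{geometric} ($f(0)=1$) plays the visible role it does there. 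The final step, recovering $\tilde f$ from the nested family $\{C_t\}$ and verifying $\KU_t(\tilde f)=C_t$, is treated with the same level of informality in both; your sketch of the right-continuity check is consistent with what the paper leaves implicit.
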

\begin{proof}
Assume $\Phi$ is of degree $m$. The idea is to construct $\widetilde{f}$
such that 
\[
\KU_{t}(\widetilde{f})=\left(\frac{\Phi\left(\KU_{t}(M)\right)}{\Phi\left(\KU_{t}(f)\right)}\right)^{\frac{1}{m}}\cdot\KU_{t}(f).
\]
 It is obvious that for such an $\widetilde{f}$ we will have $\phi_{\widetilde{f}}=\phi_{M}$.
The only thing we need to prove is that such an $\widetilde{f}$ really
exists, that is that the family of convex bodies $\left\{ \KU_{t}(\widetilde{f})\right\} $
is really the level sets of some function. This will follow easily
once we prove that these level sets are monotone: if $t\le s$ then
$\KU_{t}(\widetilde{f})\supseteq\KU_{s}(\widetilde{f})$.

Fix $0<t\le s\le1$. By direct computation, 
\[
\KU_{t}(M)=\left\{ x:\ e^{-\left|x\right|}\ge t\right\} =\left\{ x:\ \left|x\right|<\log\left(\frac{1}{t}\right)\right\} =\log\left(\frac{1}{t}\right)\cdot D.
\]
 Define 
\[
\lambda=\left(\frac{\Phi\left(\KU_{s}(M)\right)}{\Phi\left(\KU_{t}(M)\right)}\right)^{\frac{1}{m}}=\frac{\log\frac{1}{s}\cdot\Phi(D)^{\frac{1}{m}}}{\log\frac{1}{t}\cdot\Phi(D)^{\frac{1}{m}}}=\frac{\log\frac{1}{s}}{\log\frac{1}{t}}.
\]
 Notice that for every $x\in\KU_{t}(f)$ we have 
\[
f(\lambda x)=f(\lambda x+(1-\lambda)0)\ge f(x)^{\lambda}\cdot1^{1-\lambda}\ge t^{\lambda}=s,
\]
 so $\lambda x\in\KU_{s}(f)$. It follows that $\lambda\KU_{t}(f)\subseteq\KU_{s}(f)$,
so 
\[
\Phi\left(\KU_{s}(f)\right)\ge\lambda^{m}\cdot\Phi\left(\KU_{t}(f)\right)=\frac{\Phi\left(\KU_{s}(M)\right)}{\Phi\left(\KU_{t}(M)\right)}\cdot\Phi\left(\KU_{t}(f)\right),
\]
or 
\[
\frac{\Phi\left(\KU_{t}(M)\right)}{\Phi\left(\KU_{t}(f)\right)}\ge\frac{\Phi\left(\KU_{s}(M)\right)}{\Phi\left(\KU_{s}(f)\right)}.
\]
Hence we definitely have 
\[
\KU_{t}(\widetilde{f})=\left(\frac{\Phi\left(\KU_{t}(M)\right)}{\Phi\left(\KU_{t}(f)\right)}\right)^{\frac{1}{m}}\cdot\KU_{t}(f)\supseteq\left(\frac{\Phi\left(\KU_{s}(M)\right)}{\Phi\left(\KU_{s}(f)\right)}\right)^{\frac{1}{m}}\cdot\KU_{s}(f)=\KU_{s}(\widetilde{f}),
\]
and the proof is complete.
\end{proof}
We see that if $f\in\lco$, then $\widetilde{f}\in\qco$. However,
the function $\widetilde{f}$ may fail to be log-concave, as the next
example shows.
\begin{example}
Define $f(x,y)=e^{-\left(\left|x\right|+y^{2}\right)}\in\text{LC}_{0}(\R^{2})$,
and choose $\Phi=\vol$. Notice that 
\[
\left|\left\{ \left|x\right|+y^{2}\le c\right\} \right|=\int_{y=-\sqrt{c}}^{\sqrt{c}}\int_{x=y^{2}-c}^{c-y^{2}}dxdy=\frac{8}{3}c^{\frac{3}{2}},
\]
 so 
\[
\left|\KU_{t}(f)\right|=\left|\left\{ e^{-\left(\left|x\right|+y^{2}\right)}\ge t\right\} \right|=\left|\left\{ \left|x\right|+y^{2}\le\log\frac{1}{t}\right\} \right|=\frac{8}{3}\left(\log\frac{1}{t}\right)^{\frac{3}{2}},
\]
 while 
\[
\left|\KU_{t}(M)\right|=\left|\left\{ e^{-\sqrt{x^{2}+y^{2}}}\ge t\right\} \right|=\left|\left\{ \sqrt{x^{2}+y^{2}}\le\log\frac{1}{t}\right\} \right|=\pi\left(\log\frac{1}{t}\right)^{2}.
\]
 Therefore in this case we get 
\begin{eqnarray*}
\KU_{t}(\widetilde{f}) & = & \left(\frac{\left|\KU_{t}(M)\right|}{\left|\KU_{t}(f)\right|}\right)^{\frac{1}{2}}\cdot\KU_{t}(f)=\left(\frac{\pi\log^{2}\frac{1}{t}}{\frac{8}{3}\log^{\frac{3}{2}}\frac{1}{t}}\right)^{\frac{1}{2}}\KU_{t}(f)=C\cdot\log^{\frac{1}{4}}\frac{1}{t}\cdot K_{t}(f)=\\
 & = & \left\{ (x,y):\ \frac{\left|x\right|}{C\log^{\frac{1}{4}}\frac{1}{t}}+\frac{y^{2}}{C^{2}\log^{\frac{1}{2}}\frac{1}{t}}\le\log\frac{1}{t}\right\} ,
\end{eqnarray*}
 for some explicit constant $C$. In other words, $\widetilde{f}(x,y)=t$,
where $t\in(0,1]$ is the unique solution to the equation 
\[
\frac{\left|x\right|}{C\log^{\frac{1}{4}}\frac{1}{t}}+\frac{y^{2}}{C^{2}\log^{\frac{1}{2}}\frac{1}{t}}=\log\frac{1}{t}.
\]

In general this is difficult to solve explicitly, but for $y=0$ we
get that $\widetilde{f}(x,0)$ is the solution of 
\[
\frac{\left|x\right|}{C\log^{\frac{1}{4}}\frac{1}{t}}=\log\frac{1}{t},
\]
 so 
\[
\widetilde{f}(x,0)=e^{-\widetilde{C}\left|x\right|^{\frac{4}{5}}}.
\]
 This is enough to conclude that $\widetilde{f}$ is not a log-concave
function, even though $f$ is.
\end{example}
Using this proposition, we can prove our main propositions again,
with rescalings replaced by dilations. As the proofs are almost identical,
we will only state the results:
\begin{prop}
Assume $f,g\in\lco$, and fix a size functional $\Phi$ of degree
$k$. Then one can dilate $f$ and $g$ to functions $\tilde{f}$,
$\tilde{g}$ in such a way that 
\[
\Phi(\widetilde{f}\oplus\widetilde{g})^{\frac{1}{k}}\ge\Phi(\widetilde{f})^{\frac{1}{k}}+\Phi(\widetilde{g})^{\frac{1}{k}}.
\]

\end{prop}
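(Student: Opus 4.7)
The plan is to mimic the proof of the rescaling version of Brunn--Minkowski, but replace the role played by matching rescalings with the dilation construction from the previous proposition. The point of that construction is that given any $f\in\lco$ we can produce $\widetilde f\in\qco$ whose level sets are homothets of those of $f$ and whose $\Phi$-profile $\phi_{\widetilde f}(t)=\Phi(\KU_t(\widetilde f))$ coincides with $\phi_M$, where $M(x)=e^{-|x|}$. This profile-matching is the whole game: once both $\widetilde f$ and $\widetilde g$ share the same $\Phi$-profile, their $\Phi$-rearrangements coincide, and so Theorem~\ref{prop:gen-BM}(ii) collapses the right-hand side to something of the form $2\odot(\cdot)$, which is precisely what produces the sharp factor $2$ in the Brunn--Minkowski inequality.

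Concretely, I would proceed as follows. \emph{Step 1.} Apply the dilation proposition separately to $f$ and to $g$, obtaining $\widetilde f,\widetilde g\in\qco$ with $\phi_{\widetilde f}=\phi_{\widetilde g}=\phi_M$. \emph{Step 2.} Observe that $M$ is rotation invariant with $\KU_t(M)=\log(1/t)\,D$, so $\KU_t(M)^\Phi=\KU_t(M)$, i.e.\ $M^\Phi=M$. Since the rearrangement $\widetilde f^\Phi$ depends only on the $\Phi$-sizes of the level sets of $\widetilde f$, the equality $\phi_{\widetilde f}=\phi_M$ forces $\widetilde f^\Phi=M^\Phi=M$, and similarly $\widetilde g^\Phi=M$. \emph{Step 3.} Apply the rearranged Brunn--Minkowski inequality of Theorem~\ref{prop:gen-BM}(ii) to get
\[
(\widetilde f\oplus\widetilde g)^\Phi \;\ge\; \widetilde f^\Phi\oplus\widetilde g^\Phi \;=\; M\oplus M \;=\; 2\odot M.
\]

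\emph{Step 4.} Push this functional inequality down to numbers by applying $\Phi$. Two routine facts are used here: first, $\Phi(h)=\Phi(h^\Phi)$ for every $h\in\qco$ with compact level sets (this is the computation already carried out in Section~\ref{sec:Inequalities} right after the definition of $f^\Phi$); second, $\Phi$ is monotone under pointwise domination of functions, because $h_1\ge h_2$ implies $\KU_t(h_1)\supseteq\KU_t(h_2)$ and classical mixed volumes are monotone under inclusion, so integrating in $t$ preserves monotonicity. Combining these gives
\[
\Phi(\widetilde f\oplus\widetilde g)=\Phi\bigl((\widetilde f\oplus\widetilde g)^\Phi\bigr)\ge\Phi(2\odot M)=2^{k}\Phi(M)=2^{k}\Phi(\widetilde f),
\]
where the homogeneity $\Phi(\lambda\odot h)=\lambda^{k}\Phi(h)$ follows from $\KU_t(\lambda\odot h)=\lambda\KU_t(h)$ together with the degree-$k$ homogeneity of the underlying mixed volume. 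Taking $k$-th roots and using $\Phi(\widetilde f)=\Phi(\widetilde g)=\Phi(M)$ yields the desired
\[
\Phi(\widetilde f\oplus\widetilde g)^{1/k}\ge 2\Phi(\widetilde f)^{1/k}=\Phi(\widetilde f)^{1/k}+\Phi(\widetilde g)^{1/k}.
\]

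I do not expect a serious obstacle: every ingredient has already been assembled. The only point that needs a moment's care is the identification $\widetilde f^\Phi=M$. It does \emph{not} follow from any abstract ``rearrangement preserves dilation'' statement; rather it uses the specific feature of the reference $M$ that its own level sets are already Euclidean balls, so $M^\Phi=M$. If one chose a different, non-rotation-invariant reference in the dilation construction the same argument would still give matching rearrangements $\widetilde f^\Phi=\widetilde g^\Phi$, and Theorem~\ref{prop:gen-BM}(ii) would again collapse the right-hand side to $2\odot\widetilde f^\Phi$, so this step is robust.
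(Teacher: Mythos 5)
Your proof is correct and follows essentially the same route the paper intends: dilate both $f$ and $g$ against the common reference $M(x)=e^{-|x|}$ so that their $\Phi$-profiles (and hence $\Phi$-rearrangements) coincide, then invoke Theorem~\ref{prop:gen-BM}(ii) and the homogeneity of $\Phi$ to collapse $\widetilde f^\Phi\oplus\widetilde g^\Phi$ to $2\odot\widetilde g^\Phi$. Your explicit identification $\widetilde f^\Phi=\widetilde g^\Phi=M$ (rather than merely $\widetilde f^\Phi=\widetilde g^\Phi$) is a small cosmetic streamlining; as you yourself note, only the equality of the two rearrangements is actually used, so the argument is exactly the dilation analogue of the paper's rescaling proof.
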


\begin{prop}
Assume $f_{1},f_{2},\ldots,f_{m}\in\lco$ , and $A_{1},A_{2},\ldots,A_{n-m}\in\kn$
are compact bodies with non-empty interior. Then one can dilate each
$f_{i}$ to a function $\tilde{f}_{i}$ such that 
\[
V(\widetilde{f_{1}},\widetilde{f_{2}},\ldots,\widetilde{f_{m}},\o_{A_{1}},\ldots,\o_{A_{n-m}})^{m}\ge\prod_{i=1}^{m}V(\widetilde{f_{i}},\widetilde{f_{i}},\ldots,\widetilde{f_{i}},\o_{A_{1}},\ldots,\o_{A_{n-m}}).
\]

\end{prop}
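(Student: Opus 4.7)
The plan is to imitate the proof of the rescaled Alexandrov--Fenchel proposition appearing just before, with ``rescaling'' replaced by ``dilation.'' First, set $\Phi(K)=V(K,K,\ldots,K,A_{1},\ldots,A_{n-m})$, which is a size functional of degree $m$. Apply the preceding dilation proposition to each $f_{i}\in\lco$ to obtain $\widetilde{f_{i}}\in\qco$ whose level sets are homothets of those of $f_{i}$ and which satisfy $\Phi(\KU_{t}(\widetilde{f_{i}}))=\Phi(\KU_{t}(M))$ for every $t\in(0,1]$, where $M(x)=e^{-|x|}$. Log-concavity of the $f_{i}$ is used only here, since this is the standing hypothesis of the dilation construction; the outputs $\widetilde{f_{i}}$ need only be quasi-concave for the rest of the argument.

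The key observation is that this matching of $\Phi$-profiles with $M$ forces $\widetilde{f_{i}}^{\Phi}=M$ for every $i$. Indeed, $M$ is rotation invariant with $\KU_{t}(M)=\log(1/t)\cdot D$, hence $\Phi(\KU_{t}(M))=(\log(1/t))^{m}\Phi(D)$, and then by the definition of the $\Phi$-rearrangement
\[
\KU_{t}(\widetilde{f_{i}})^{\Phi}=\left(\frac{\Phi(\KU_{t}(\widetilde{f_{i}}))}{\Phi(D)}\right)^{\frac{1}{m}}D=\log\frac{1}{t}\cdot D=\KU_{t}(M),
\]
so all $m$ rearrangements coincide with the single rotation invariant function $M$.

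Now Proposition \ref{prop:AF}(ii), applied to $\widetilde{f_{1}},\ldots,\widetilde{f_{m}}$ with the size functional $\Phi$, yields
\[
V(\widetilde{f_{1}},\ldots,\widetilde{f_{m}},\o_{A_{1}},\ldots,\o_{A_{n-m}})\ge V(\widetilde{f_{1}}^{\Phi},\ldots,\widetilde{f_{m}}^{\Phi},\o_{A_{1}},\ldots,\o_{A_{n-m}})=V(M,\ldots,M,\o_{A_{1}},\ldots,\o_{A_{n-m}})=\Phi(M).
\]
Integrating over level sets, $\Phi(\widetilde{f_{i}})=\int_{0}^{1}\Phi(\KU_{t}(\widetilde{f_{i}}))\,dt=\int_{0}^{1}\Phi(\KU_{t}(M))\,dt=\Phi(M)$ for each $i$, and by definition this common value equals $V(\widetilde{f_{i}},\ldots,\widetilde{f_{i}},\o_{A_{1}},\ldots,\o_{A_{n-m}})$. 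Raising the displayed inequality to the $m$-th power therefore gives
\[
V(\widetilde{f_{1}},\ldots,\widetilde{f_{m}},\o_{A_{1}},\ldots,\o_{A_{n-m}})^{m}\ge\Phi(M)^{m}=\prod_{i=1}^{m}\Phi(\widetilde{f_{i}})=\prod_{i=1}^{m}V(\widetilde{f_{i}},\ldots,\widetilde{f_{i}},\o_{A_{1}},\ldots,\o_{A_{n-m}}),
\]
which is exactly the claim.

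There is no real obstacle: the argument is a direct transcription of its rescaling counterpart, and the only point that requires a moment's thought is why equality of the $\Phi$-profiles implies equality of the full $\Phi$-rearrangements, which is a consequence of the rotation invariance of the reference function $M$. Log-concavity enters only to license the dilation step; all subsequent manipulations take place inside $\qco$.
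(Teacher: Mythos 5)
Your argument is correct and matches what the paper intends: it invokes the dilation proposition with $\Phi(K)=V(K,\ldots,K,A_{1},\ldots,A_{n-m})$, observes that dilating all $f_i$ to share the $\Phi$-profile of $M(x)=e^{-|x|}$ forces $\widetilde{f_i}^{\Phi}=M$, and then runs the same chain through Theorem~\ref{prop:AF}(ii) as in the rescaled Alexandrov--Fenchel proof. This is precisely the "almost identical" transcription the paper alludes to, so no further comment is needed.
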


\subsubsection*{Acknowledgment}

The authors would like to thank Prof. R. Schneider for his detailed
comments regarding the written text.

\bibliographystyle{plain}
\bibliography{hyperbolic-paper-new}

\begin{thebibliography}{10}

\bibitem{artstein-avidan_hidden_2011}
S.~Artstein-Avidan and V.~Milman.
\newblock Hidden structures in the class of convex functions and a new duality
  transform.
\newblock {\em Journal of the European Mathematical Society {(JEMS)}},
  13(4):975{\textendash}1004, 2011.

\bibitem{bobkov_concentration_2011}
S.~Bobkov and M.~Madiman.
\newblock Concentration of the information in data with log-concave
  distributions.
\newblock {\em The Annals of Probability}, 39(4):1528{\textendash}1543, 2011.

\bibitem{borell_complements_1973}
C.~Borell.
\newblock Complements of {{Lyapunov's}} inequality.
\newblock {\em Mathematische Annalen}, 205:323{\textendash}331, 1973.

\bibitem{brascamp_extensions_1976}
H.~J. Brascamp and E.~H. Lieb.
\newblock On extensions of the {{B}runn-{M}inkowski} and
  {{P}r{\'e}kopa-{L}eindler} theorems, including inequalities for log concave
  functions, and with an application to the diffusion equation.
\newblock {\em J. Functional Analysis}, 22(4):366{\textendash}389, 1976.

\bibitem{colesanti_what_2012}
A.~Colesanti.
\newblock What is the perimeter of a log-concave function?
\newblock In {\em Workshop on Convexity and Asymptotic Geometric Analysis},
  Montreal, April 2012.

\bibitem{gardner_operations_2012}
R.~J. Gardner, D.~Hug, and W.~Weil.
\newblock Operations between sets in geometry.
\newblock {\em {arXiv:1205.4327}}, May 2012.

\bibitem{lieb_analysis_1997}
E.~H. Lieb and M.~Loss.
\newblock {\em Analysis}, volume~14 of {\em Graduate Studies in Mathematics}.
\newblock American Mathematical Society, Providence, {RI}, 1997.

\bibitem{milman_geometrization_2008}
V.~Milman.
\newblock Geometrization of probability.
\newblock In M.~Kapranov, Y.~Manin, P.~Moree, S.~Kolyada, and L.~Potyagailo,
  editors, {\em Geometry and Dynamics of Groups and Spaces}, volume 265, pages
  647--667. Birkh{\"a}user Basel, Basel, 2008.

\bibitem{rockafellar_convex_1970}
R.~T. Rockafellar.
\newblock {\em Convex analysis}.
\newblock Princeton Mathematical Series, No. 28. Princeton University Press,
  Princeton, {N.J.}, 1970.

\bibitem{schneider_convex_1993}
R.~Schneider.
\newblock {\em Convex bodies: the Brunn-Minkowski theory}, volume~44 of {\em
  Encyclopedia of Mathematics and its Applications}.
\newblock Cambridge University Press, Cambridge, 1993.

\bibitem{simon_mathematics_1994}
C.~P. Simon and L.~E. Blume.
\newblock {\em Mathematics for Economists}.
\newblock W. W. Norton \& Company, 1 edition, April 1994.

\end{thebibliography}

\end{document}